%
%
%
%
\documentclass{amsart}
\newcommand{\RN}[1]{%
  \textup{\uppercase\expandafter{\romannumeral#1}}%
}
\usepackage{amsmath}
\usepackage{amssymb}
\usepackage{enumerate}
\usepackage[hidelinks]{hyperref}

\usepackage{mathtools}
\usepackage[capitalise]{cleveref}
\numberwithin{equation}{section}

\crefname{equation}{}{}
\newtheorem{theorem}{Theorem}[section]
\newtheorem{lemma}[theorem]{Lemma}
\newtheorem{proposition}[theorem]{Proposition}

\theoremstyle{definition}
\newtheorem{definition}[theorem]{Definition}
\newtheorem{example}[theorem]{Example}

\theoremstyle{remark}
\newtheorem{remark}[theorem]{Remark}

\usepackage{tikz}
\usepackage{tikz-cd}

\usepackage{enumitem}

\begin{document}

\title{On the $K$-theoretic Hall algebra of a surface}

\author{Yu Zhao}
\address{MIT Department of Mathematics}
\email{zy199402@mit.edu}
\thanks{ }

\date{}

\dedicatory{}

\keywords{}

\begin{abstract}
In this paper, we define the $K$-theoretic Hall algebra for
$0$-dimensional coherent sheaves on a smooth projective surface, prove
that the algebra is associative and construct a homomorphism to a
shuffle algebra analogous to Negut \cite{negut2017shuffle}).
\end{abstract}

\maketitle

\section{Introduction}

\subsection{Motivation}

Let $S$ be a smooth surface over an algebraically closed field $k$. We
consider the stack $Coh_{n}$ of length $n$ coherent sheaves on
$S$, which can be represented by a quotient stack:
$$Coh_{n}=[Quot_{n}^{\circ}/GL_{n}]$$
where $Quot_{n}^{\circ}$ is studied in \cref{sec:quot}. Therefore, the Grothendick group of $Coh_{n}$ can be
represented by
$$K(Coh_{n})=K^{GL_{n}}(Quot_{n}^{\circ})$$
and we denote $K(Coh)$ the abelian group

$$K(Coh)=\bigoplus_{n=0}^{\infty}K(Coh_{n}).$$

Based on their construction for $S=\mathbb{A}^{2}$, Schiffmann and Vasserot (\cite{schiffmann10:hall_langl}) expect there
is an algebra structure on $K(Coh)$, which is called the K-theoretic
Hall algebra of $S$. The general principle for constructing the $K$-theoretic Hall algebra
is to consider the stack $Corr_{n,m}$ of short exact sequences
$$0\to \mathcal{E}_{n}\to \mathcal{E}_{n+m}\to \mathcal{E}_{m}\to 0$$
where $\mathcal{E}_{n}\in Coh_{n}$, $\mathcal{E}_{m}\in Coh_{m}$ and
$\mathcal{E}_{n+m}\in Coh_{n+m}$ for any two non-negative integers $n,m$. There is a natural diagram:

\begin{equation*}
  \begin{tikzcd}
    & \{0\to \mathcal{E}_{n}\to \mathcal{E}_{n+m}\to \mathcal{E}_{m}\to 0\}\ar{rd} \ar{ld} & \\
    (\mathcal{E}_{n},\mathcal{E}_{m}) & & \mathcal{E}_{n+m}
  \end{tikzcd}
\end{equation*}

which induces morphisms:

\begin{equation*}
  \begin{tikzcd}
    & Corr_{n,m} \ar{rd}{p} \ar{ld}{q} & \\
    Coh_{n}\times Coh_{m} & & Coh_{n+m}
  \end{tikzcd}
\end{equation*}

where $q$ is a proper map. One expects that there is an appropriate
definition of pull back map $p^{!}:K(Coh_{n}\times Coh_{m})\to
K(Corr_{n,m})$ and push forward $q_{*}:K(Corr_{n,m})\to K(Coh_{n+m})$
such that $*^{K(Coh)}=q_{*}\circ p^{!}$ induces an associative algebra structure of $K(Coh)$.

\subsection{Description of Our Results}

In this paper, we realize the aforementioned expectation by
representing $Corr_{n,m}$ as a quotient stack
$$Corr_{n,m}=[Flag_{n,m}^{\circ}/P_{m,n}]$$
in \cref{sec:quot}. We consider a resolution of universal quotients
$$0\to \mathcal{V}_{m} \to \mathcal{W}_{m} \to \mathcal{O}^{m} \to
\mathcal{E}_{m} \to 0$$
over $Quot_{n}^{\circ}\times S$ and define two vector bundles $V_{n,m}$ and $W_{n,m}$ over
$Quot_{n}^{\circ}\times Quot_{m}^{\circ}$. We observe there is a
Cartesian diagram \cref{eq:2.4}:
 \begin{equation}
   \begin{tikzcd}
    Flag_{n,m}^{\circ } \ar{r}{t_{n,m}} \ar{d} & W_{n,m} \ar{d}{\psi_{n,m}} \\
    Quot_{n}^{\circ}\times Quot_{m}^{\circ } \ar{r}{i_{V}} & V_{n,m}    
  \end{tikzcd}
\end{equation}
where $\psi_{n,m}$ is a locally complete intersection morphism. We use
$\psi^{!}_{n,m}$ to define the refined pull-back $p^{!}$ and prove
that it does not depend on the choice of resolutions. Hence we give
the appropriate definition of $K$-theoretic Hall algebra in \cref{def:hall}. This generalizes the work of Schiffmann and Vasserot
\cite{schiffmann10:hall_langl} for $S=\mathbb{A}^{2}$ and Minets
\cite{minets18:cohom_hall_higgs} for $S=T^{*}C$ where $C$ is a smooth
projective curve. Moreover, based on the techniques
of refined Gysin maps between two vector bundles which we develop in \cref{sec:vec}, we prove that the $K$-theoretic Hall algebra is associative.

\begin{theorem}[\cref{thm:ass}]
  The K-theoretic Hall algebra $(K(Coh),*^{K(Coh)})$ is associative.
\end{theorem}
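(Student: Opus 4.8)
The plan is to prove associativity by factoring both triple products $(a*b)*c$ and $a*(b*c)$ through one common threefold correspondence. First I would introduce the stack $Corr_{n,m,l}$ parametrizing two-step filtrations $0\subseteq \mathcal{E}_{n}\subseteq \mathcal{E}_{n+m}\subseteq \mathcal{E}_{n+m+l}$ whose successive quotients lie in $Coh_{n}$, $Coh_{m}$, $Coh_{l}$, represent it as a quotient stack $[Flag^{\circ}_{n,m,l}/P]$ by the same argument as in \cref{sec:quot}, and record its forgetful maps: $q^{(3)}$ remembering $\mathcal{E}_{n+m+l}$, $p^{(3)}$ remembering the three successive quotients, and two intermediate maps to $Coh_{n+m}\times Coh_{l}$ and $Coh_{n}\times Coh_{m+l}$. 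The elementary geometric input is the pair of identifications
\begin{equation*}
  Corr_{n+m,l}\times_{Coh_{n+m}}Corr_{n,m}\;\cong\;Corr_{n,m,l}\;\cong\;Corr_{n,m+l}\times_{Coh_{m+l}}Corr_{m,l},
\end{equation*}
obtained by pulling back a sub-object along the relevant quotient map; each exhibits $Corr_{n,m,l}$ as a fibre product, hence gives a Cartesian square of the expected kind.

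Next, for $a\in K(Coh_{n})$, $b\in K(Coh_{m})$, $c\in K(Coh_{l})$ I would expand $(a*b)*c = q_{*}p^{!}\big((q'_{*}p'^{!}(a\boxtimes b))\boxtimes c\big)$, use the first Cartesian square above to commute the inner $q'_{*}$ through the outer refined pull-back $p^{!}$ by base change, and then use the compatibility of consecutive refined Gysin maps to collapse $p^{!}\circ(p'^{!}\boxtimes\mathrm{id})$ into a single refined pull-back along $p^{(3)}$. This rewrites $(a*b)*c$ as $q^{(3)}_{*}$ of a refined pull-back of $a\boxtimes b\boxtimes c$ along $p^{(3)}$; the same three moves applied to the second Cartesian square rewrite $a*(b*c)$ as $q^{(3)}_{*}$ of an a priori different refined pull-back along $p^{(3)}$. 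Associativity thus reduces to the statement that these two refined pull-backs along $p^{(3)}$ coincide.

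The reduction consumes three ingredients: (i) base change for the refined Gysin maps $\psi^{!}$ against the proper maps $q$, so that $q_{*}$ may be moved past a pull-back in a Cartesian square; (ii) functoriality of refined Gysin maps under composition; and (iii) the identification of the composite of two Cartesian squares of the shape \cref{eq:2.4} with a single such square over the triple $Quot$-scheme. All three are instances of the formalism of refined Gysin maps between two vector bundles developed in \cref{sec:vec}: (i) and (ii) are its base-change and functoriality properties, while for (iii) I would construct explicit vector bundles $V_{n,m,l}$ and $W_{n,m,l}$ over $Quot^{\circ}_{n}\times Quot^{\circ}_{m}\times Quot^{\circ}_{l}$, fitting into a Cartesian square that realizes $Flag^{\circ}_{n,m,l}$ and into which both towers of two-step squares embed. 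Since $V_{n,m}$ and $W_{n,m}$ depend on a choice of resolution of the universal quotient, one must pick resolutions over the triple $Quot$-scheme restricting to compatible resolutions in each two-step factor; the independence-of-resolution statement established for \cref{def:hall} then ensures the resulting refined pull-back is the canonical one, the same computed from either side.

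The main obstacle is point (iii). The vector-bundle squares \cref{eq:2.4} cannot simply be stacked, because $W_{n+m,l}$ and $V_{n+m,l}$ are built from a resolution of the universal quotient of the \emph{length-}$(n+m)$ sheaf, which has no direct relation to the bundles living over $Quot^{\circ}_{n}\times Quot^{\circ}_{m}$. Producing the common three-term diagram of vector bundles and verifying that the excess (self-intersection) bundles occurring in the iterated refined Gysin map agree with those in the one-step map — equivalently, that the relevant normal bundles sit in the expected short exact sequences — is the technical heart of the argument, and is exactly where the refined-Gysin-between-vector-bundles machinery of \cref{sec:vec} is needed. Throughout, every construction is $GL$-equivariant on the $Quot$-schemes and descends to the quotient stacks, so the equivariant refined Gysin formalism applies verbatim.
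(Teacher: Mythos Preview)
Your overall strategy matches the paper exactly through steps (i) and (ii): the paper reduces $(a*b)*c=a*(b*c)$ to the identity $\psi_{n+m,l}^{!}\psi_{n,m}^{!}=\psi_{n,m+l}^{!}\psi_{m,l}^{!}$ of refined Gysin maps $K(Quot_{n,m,l}^{\circ})\to K(Flag_{n,m,l}^{\circ})$ by precisely the base-change (\cref{lem:1.1}) and composition moves you describe, and this remaining identity is \cref{prop:ass}.

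For (iii), however, your concrete plan diverges from the paper and has a genuine gap. There is no single linear map $W_{n,m,l}\to V_{n,m,l}$ of vector bundles over $Quot_{n,m,l}^{\circ}$ whose preimage of the zero section is $Flag_{n,m,l}^{\circ}$: the fibre of $Flag_{n,m,l}^{\circ}\to Quot_{n,m,l}^{\circ}$ is an iterated extension (a $\mathrm{Hom}(\mathcal{I}_{l},\mathcal{E}_{n})$-torsor over $\mathrm{Hom}(\mathcal{I}_{m},\mathcal{E}_{n})\times\mathrm{Hom}(\mathcal{I}_{l},\mathcal{E}_{m})$), not canonically a vector space. What the paper actually does is this. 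It introduces the intermediate scheme $Corr_{n,m,l}^{\circ}=(Flag_{n,m}^{\circ}\times Quot_{l}^{\circ})\times_{Quot_{n,m,l}^{\circ}}(Quot_{n}^{\circ}\times Flag_{m,l}^{\circ})$; choosing compatible resolutions produces short exact sequences $0\to W_{n,l}\to W_{n,l+m}\to W_{n,m}\to 0$ over $Quot_{n}^{\circ}\times Flag_{m,l}^{\circ}$ and $0\to W_{n,l}\to W_{n+m,l}\to W_{m,l}\to 0$ over $Flag_{n,m}^{\circ}\times Quot_{l}^{\circ}$. Applying \cref{lem:kass} to each, together with \cref{lem:kcom}, factors the two compositions as $\psi_{1}^{!}\circ\psi_{m,l}^{!}\circ\psi_{n,m}^{!}$ and $\psi_{2}^{!}\circ\psi_{n,m}^{!}\circ\psi_{m,l}^{!}$, where $\psi_{1},\psi_{2}$ are l.c.i.\ morphisms from two affine bundles $Y_{1},Y_{2}$ over $Corr_{n,m,l}^{\circ}$ into $V_{n,l}\times_{Quot_{n,m,l}^{\circ}}Corr_{n,m,l}^{\circ}$.

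The step your proposal does not address is proving $\psi_{1}^{!}=\psi_{2}^{!}$. The paper realizes both $Y_{1}$ and $Y_{2}$ inside $W_{n+m,m+l}$ over $Corr_{n,m,l}^{\circ}$ as preimages, under one and the same map $(g,h)\colon W_{n+m,m+l}\to W_{n+m,n+m}\times W_{m+l,m+l}$, of two \emph{different nonzero} sections $(\overline{t_{n,m}},0)$ and $(0,\overline{t_{m,l}})$; it then writes down an explicit element $\beta\in W_{n+m,m+l}$ (built from the splitting $k^{m+l}\to k^{m}$ and the universal surjection) such that translation by $\beta$ carries one section to the other, giving $Y_{1}\cong Y_{2}$ compatibly with $\psi_{1},\psi_{2}$. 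This affine shift is the technical heart of \cref{prop:ass} and is exactly what substitutes for your hoped-for ``single square over the triple Quot-scheme''. The machinery of \cref{sec:vec} by itself only produces the two factorizations through $\psi_{1}^{!}$ and $\psi_{2}^{!}$; it does not identify them.
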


Another question we are considering in the paper is the relation
between the $K$-theoretic Hall algebra and the shuffle algebra. The shuffle
algebra is considered by Schiffmann and Vasserot \cite{schiffmann17:hall} for
quivers and Negut \cite{negut2017shuffle} for surfaces.

The idea is to consider $T_{n}\subset GL_{n}$ which is the maximal
torus consisting of diagonal matrices. The fixed locus is shown to be
$(Quot_{n}^{\circ })^{T_{n}}=(Quot_{1}^{\circ })^{n}=S^{n}$ in
\cref{lem:2.4}. By \cref{thm:1.3} and the Thomason localization \cref{thm:1.4}, we have
$$K^{G_{n}}(Quot_{n}^{\circ })=(K^{T_{n}}(Quot_{n}^{\circ
}))^{\sigma_{n}}$$
and
$$K^{T_{n}}(Quot_{n}^{\circ})_{loc}=K^{T_{n}}(S^{n})_{loc}$$

where $\sigma_{n}$ is the permutation group of order $n$, and "loc" denotes localization over the fraction field of $Rep(T_{n})$.

Let
$$Sh=\bigoplus_{n=0}^{\infty}K^{T_{n}}(S^{n})_{loc}^{\sigma_{n}}.$$

Sh is endowed with a shuffle algebra structure, as in \cite{negut2017shuffle}, and we show that the localization theorem induces an algebra homomorphism \cref{eq:tau} from K(Coh) to a renormalized version (see \cref{def:full surface big}) of the shuffle algebra Sh.

\begin{theorem}[Theorem \ref{thm:3.4}]
  $\tau$ is an algebra homomorphism between $(K(Coh),*^{K(Coh)})$ and $(Sh,*^{Sh})$.
\end{theorem}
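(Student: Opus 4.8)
The plan is to verify compatibility of the map $\tau$ with the two multiplications by comparing them on the localized $T$-equivariant $K$-theory of the fixed loci, where everything becomes explicit. First I would recall from the construction of $\tau$ that it is obtained by composing three maps: the isomorphism $K^{G_n}(Quot_n^\circ) = K^{T_n}(Quot_n^\circ)^{\sigma_n}$ coming from \cref{thm:1.3}, the Thomason localization isomorphism $K^{T_n}(Quot_n^\circ)_{loc} \cong K^{T_n}(S^n)_{loc}$ of \cref{thm:1.4}, and the renormalization (the twist by the relevant Euler classes that produces the ``big'' shuffle algebra of \cref{def:full surface big}). So the core of the argument is to chase the Hall multiplication $*^{K(Coh)} = q_* \circ p^!$ through the fixed-point localization and match it, term by term, with the shuffle product $*^{Sh}$.

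The key steps, in order, are as follows. (1) Pass to the maximal torus $T_{n+m} \subset G_{n+m}$ and analyze the fixed loci of the correspondence $Corr_{n,m} = [Flag_{n,m}^\circ/P_{m,n}]$; using \cref{lem:2.4} one identifies $(Quot_{n+m}^\circ)^{T_{n+m}} = S^{n+m}$ and the fixed locus of $Flag_{n,m}^\circ$ as a union of components indexed by the $\binom{n+m}{n}$ ways of splitting the $n+m$ torus weights, each component being a product $S^n \times S^m$ mapping to $S^{n+m}$ by the corresponding partial-diagonal inclusion. (2) Apply Thomason localization to rewrite $q_*$ and $p^!$ in terms of the fixed loci: $q_*$ becomes, after dividing by Euler classes of normal bundles, a sum of pushforwards along the finite maps $S^n \times S^m \to S^{n+m}$, while $p^!$ becomes (up to Euler-class factors coming from the Cartesian square \cref{eq:2.4} defining $\psi_{n,m}$) simply restriction along the identity on $S^n \times S^m$. (3) Compute the Euler class contributions: the excess/virtual normal bundle of the locally complete intersection morphism $\psi_{n,m}$, restricted to a fixed component, is built from the bundles $V_{n,m}$ and $W_{n,m}$ evaluated at torus-fixed points, and these are exactly the tangent/normal weights of the form coming from the differences of the tautological line bundles on the $S^n$ and $S^m$ factors. (4) Assemble: the resulting formula for $\tau(x) *^{Sh} \tau(y)$ is precisely symmetrization over $\sigma_{n+m}/(\sigma_n \times \sigma_m)$ of a product of $\tau(x)$, $\tau(y)$, and a kernel which I must check coincides with the kernel defining $*^{Sh}$ in \cite{negut2017shuffle} and \cref{def:full surface big}. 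Finally, invoke the associativity already proved in the first theorem to know the left side is well-defined and the matching is consistent.

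The main obstacle I expect is step (3) together with the bookkeeping in step (4): one must show that the Euler class of the virtual normal bundle of $\psi_{n,m}$ at the fixed points — assembled from $V_{n,m}$, $W_{n,m}$, and the normal bundles of the torus-fixed loci in $Quot_n^\circ$, $Quot_m^\circ$, $Quot_{n+m}^\circ$ — reassembles, after the renormalization of \cref{def:full surface big}, into exactly the shuffle kernel of Negut, with no leftover factors and the correct dependence on the $K$-theory classes of $S$ (the cotangent bundle $\omega_S$, or equivalently the normal bundle data of $S$ inside its relevant ambient geometry). A subtle point here is that the tautological classes on $S^n$ are not just the torus weights but genuine $K$-theory classes on $S$ pulled back from the factors, so the ``Euler class'' computations are really computations in $K^{T}(S^n)$ and one must track the $S$-dependence carefully rather than treating weights as mere formal variables. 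The independence of $p^!$ from the choice of resolution (established in \cref{sec:quot}) is what guarantees this computation is legitimate, and the refined Gysin formalism of \cref{sec:vec} is what makes the localization of $p^!$ and its commutation with restriction to fixed loci rigorous; I would lean on both.
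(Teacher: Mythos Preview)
Your proposal is correct and follows essentially the same route as the paper: localize to the $T_{n+m}$-fixed locus $S^{n+m}$, compute the refined Gysin map $\psi_{n,m}^!$ there via \cref{lem:1.8} together with the explicit Euler-class formula \cref{eq:2.5} (this is your step (3)), handle the pushforward $q_*$ through the induced space $\widetilde{Flag_{n,m}^\circ}$ via \cref{lem:1.9} (whose $[\wedge^\bullet(\mathfrak{g}/\mathfrak{p})^*]$ factor supplies the second denominator $(1-z_i/z_j)$ in $\zeta^S_{ij}$), and assemble the resulting commutative squares into $\tau_{n+m}\circ *^{K(Coh)}_{n,m} = *^{Sh}_{n,m}\circ(\tau_n\otimes\tau_m)$. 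Two small clarifications: the map $\tau$ itself carries no renormalization twist---it is literally $i_{d*}^{-1}$ after localizing, with all kernel factors arising from the two computations just named---and you should avoid speaking of ``normal bundles of the torus-fixed loci in $Quot_n^\circ$'' since $Quot_n^\circ$ is not smooth in general; the paper bypasses this by working exclusively with $i_*$ and its Thomason inverse, never with $i^*$ on the Quot side.
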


\subsection{Structure of the Paper}
In Section \ref{sec:K-theory}, we review some basic facts about
equivariant K-theory, like refined Gysin maps, induction and some
applications to the vector bundles.

In Section \ref{sec:quot}, we introduce the Quot schemes and Flag
schemes, and resolution of the universal quotients over those
schemes. We also discuss some properties of refined Gysin maps
between Quot schemes and Flag schemes, which will be used in the
discussion of K-theoretic Hall algebra.

In Section \ref{sec:hall}, we define the K-theoretic Hall algebra of
a surface, and prove this algebra is associative.

In Section \ref{sec:shuffle}, we redefine the shuffle algebra $Sh$ of
\cite{negut2017shuffle} and construct a homomorphism from $K(Coh)$ to $Sh$.

\subsection{Relations to Other Work}

In the case $S=\mathbb{A}^{2}$, Schiffmann and Vasserot (\cite{schiffmann10:hall_langl})
studied an equivariant version of $K$-theoretic Hall algebra. In
the case $S=T^{*}C$ where $C$ is a smooth curve, Minets (\cite{minets18:cohom_hall_higgs}) studied
an analogous moduli stack, namely the moduli stack of Higgs sheaves.

Instead of studying K-theory, one could enquire about other cohomology theories, and we expect many of the results in the present paper to carry through (assuming the existence of an equivariant localization theorem). For example, in \cite{kapranov2019cohomological}, Kapranov-Vasserot independently introduced a construction philosophically similar to ours, and constructed an algebra structure on the Borel-Moore homology groups of the stacks of coherent sheaves of arbitrary dimension on S.

\subsection{Acknowledgment}

This paper is dedicated to my advisor, Andrei Negut. It is hard to
imagine any academic breakthrough of myself without his tremendous
unselfish help. I would also like to acknowledge Shuai Wang, who first
introduced this topic to me.

When I was writing this paper, I received lots of useful suggestions
form Davesh Maulik, and after posting the first paper to arxiv, I also
received many useful feedbacks from Alexandre Minets, Francesco Sala
and Mikhail Kapranov. I would like to
acknowledge them for their useful discussions.
\section{Equivariant K-theory}\label{sec:K-theory}

In this section, we recall some basic facts about equivariant
K-theory from \cite{chriss2009representation} and
\cite{anderson2015operational}. We work in the category of separated
schemes of finite type over an algebraically closed field $k$,
equipped with an action of a reductive group $G$. All morphisms are equivariant with respect to the action of $G$.

\subsection{Grothendick groups}
The Grothendick group of equivariant coherent sheaves $K^{G}(X)$ is
generated by classes $[\mathcal{F}]$ for each $G$-equivariant coherent sheaf $\mathcal{F}$ on
$X$, subject to the relation $[\mathcal{F}]=[\mathcal{G}]+[\mathcal{H}]$ for any exact sequence of $G$-equivariant coherent sheaves
$$0\to \mathcal{G}\to \mathcal{F}\to \mathcal{H}\to 0$$

\begin{example}
  Let $X=\mathit{Spec}(k)$. Then $K^{G}(X)=Rep(G)$, the representation
  ring of $G$. Moreover, for any scheme $X$, $K^{G}(X)$ is a module over $Rep(G)$.
\end{example}

\subsection{Refined Gysin Maps}
\label{sec:1.1}

\begin{definition}
  A morphism $f:X\to Y$ is called a locally complete intersection
  morphism (l.c.i. morphism for short) if $f$ is the composition of a regular embedding and a smooth
 morphism.
\end{definition}

\begin{example}
  In this paper, we will not distinguish locally free sheaves and vector
bundles as their total spaces. For any vector bundle $M$ over $X$, let
$pr_{M}:M\to X$ be the projection and $i_{M}:X\to M$ be the zero
section.

Given $\phi:V\to W$ a linear morphism of two vector bundles
over $X$. Let $Y=\phi^{-1}(i_{W}(X))$ and we consider the following cartesian diagram:

\begin{equation*}
  \begin{tikzcd}
    Y \ar{r} \ar{d} & V \ar{d}{\rho} \\
    V \ar{r}{(id,0)} \ar{d}{pr_{V}} & V\times _{X}W \ar{d}{pr_{W}} \\
    X \ar{r}{i_{W}}  & W
  \end{tikzcd}
\end{equation*}
where $\rho=(id,\phi)$. $pr_{W}$ is flat and $\rho$ is a regular
embedding, so $\phi=pr_{W}\circ\rho$ is l.c.i..
\end{example}

\begin{definition}
  Given a Cartesian diagram
\begin{equation}
  \begin{tikzcd}
    X'\ar{r}{g'} \ar{d}{f'}& X \ar{d}{f} \\
    Y'\ar{r}{g}            & Y          \\
  \end{tikzcd}
\end{equation}
where $f$ is a l.c.i. morphism, the
\textbf{refined Gysin map} $f^{!}:K(Y')\to K(X')$ is defined by
$$f^{!}([\mathcal{F}])=\sum_{i=0}^{\infty}(-1)^{i}[Tor_{i}^{\mathcal{O}_{Y}}(\mathcal{O}_{X},\mathcal{F})].$$

$f^{!}$ is well defined because $f$ has finite Tor-dimension,
i.e. for any $\mathcal{F}\in Coh(Y)$,
$Tor_{i}^{\mathcal{O}_{Y}}(\mathcal{O}_{X},\mathcal{F})=0$ for $i\gg
0$ if we regard $\mathcal{O}_{X}$ as a $\mathcal{O}_{Y}$-module.
This definition still holds if $\mathcal{F}$ has an equivariant structure.
\end{definition}

The refined Gysin map has the following properties:

\begin{lemma}[Lemma 3.1 of \cite{anderson2015operational}]
  \label{lem:1.1}
  Consider the following Cartesian diagrams \cref{cd:1}:
  \begin{equation}
    \label{cd:1}
    \begin{tikzcd}
      X''\ar{r}{h'}  \ar{d}  & X' \ar{r}{g'} \ar{d}{f'}  & X \ar{d}{f} \\
      Y''\ar{r}{h} & Y' \ar{r}{g}  & Y,
    \end{tikzcd}
  \end{equation}
where $h$ is proper and $f$ is a l.c.i. morphism. Then
$$f^{!}h_{*}=h_{*}'f^{!}:K(Y'')\to K(X').$$
\end{lemma}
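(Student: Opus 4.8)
The plan is to move everything into the derived category and reduce to two special cases. Because $f$ is l.c.i., $\mathcal{O}_X$ has finite $Tor$-dimension over $\mathcal{O}_Y$, so $\mathcal{O}_X\otimes^L_{\mathcal{O}_Y}(-)$ sends $D^b_{coh}(Y')$ to $D^b_{coh}(X')$; by additivity of $K$-classes along distinguished triangles this functor induces precisely the map $[\mathcal{F}]\mapsto\sum_i(-1)^i[Tor_i^{\mathcal{O}_Y}(\mathcal{O}_X,\mathcal{F})]$ on Grothendieck groups, and this is also how one extends $f^!$ to classes of bounded complexes --- which is needed since $h_*[\mathcal{F}]=\sum_j(-1)^j[R^jh_*\mathcal{F}]$. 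With this understood, for a coherent sheaf $\mathcal{F}$ on $Y''$ one has $f^!h_*[\mathcal{F}]=[\mathcal{O}_X\otimes^L_{\mathcal{O}_Y}Rh_*\mathcal{F}]$ and $h'_*f^![\mathcal{F}]=[Rh'_*(\mathcal{O}_X\otimes^L_{\mathcal{O}_Y}\mathcal{F})]$ in $K(X')$, so the lemma reduces to exhibiting a canonical isomorphism $\mathcal{O}_X\otimes^L_{\mathcal{O}_Y}Rh_*\mathcal{F}\cong Rh'_*(\mathcal{O}_X\otimes^L_{\mathcal{O}_Y}\mathcal{F})$ in $D^b_{coh}(X')$. (Here the $f^!$ on the right of the identity must be read as the Gysin map attached to $f$ together with the base $Y''\to Y$, i.e. the one computed by $Tor^{\mathcal{O}_Y}_\bullet(\mathcal{O}_X,-)$; this is the reading that makes the statement tautological when $h$ is an isomorphism.)

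It suffices to prove the lemma when $f$ is smooth and when $f$ is a regular embedding: a general l.c.i. $f$ factors as $f=p\circ s$ with $s$ a regular embedding and $p$ smooth, one has $f^!=s^!\circ p^!$ by associativity of $\otimes^L$, and every base change of the proper morphism $h$ is again proper, so the two special cases can be applied in succession. When $f$ is smooth, $f^!$ is flat pullback and the desired identity is just flat base change for $Rh_*$ (pass to $K$-classes). When $f$ is a regular embedding, set $\mathcal{Q}:=\mathcal{O}_X\otimes^L_{\mathcal{O}_Y}\mathcal{O}_{Y'}=Lg^*\mathcal{O}_X$, a perfect complex on $Y'$ --- locally a pulled-back Koszul complex --- whose cohomology is supported on $X'$. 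Associativity of the derived tensor product gives $\mathcal{O}_X\otimes^L_{\mathcal{O}_Y}\mathcal{G}\cong\mathcal{Q}\otimes^L_{\mathcal{O}_{Y'}}\mathcal{G}$ for $\mathcal{G}\in D^b_{coh}(Y')$ and, pulled up to $Y''$, $\mathcal{O}_X\otimes^L_{\mathcal{O}_Y}\mathcal{F}\cong Lh^*\mathcal{Q}\otimes^L_{\mathcal{O}_{Y''}}\mathcal{F}$. The projection formula for the proper map $h$ --- legitimate because $\mathcal{Q}$ is perfect --- then yields $Rh_*(Lh^*\mathcal{Q}\otimes^L_{\mathcal{O}_{Y''}}\mathcal{F})\cong\mathcal{Q}\otimes^L_{\mathcal{O}_{Y'}}Rh_*\mathcal{F}$, and since the complex on the left is supported on $X''$, its pushforward along $h$ coincides with its pushforward along $h'$. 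This is exactly the isomorphism to which the lemma was reduced, and passing to $K$-classes completes the proof.

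I expect the only real work to be bookkeeping rather than a genuine obstacle: one must set up the derived reformulation of $f^!$ with some care (the identification of $\sum_i(-1)^i[Tor_i^{\mathcal{O}_Y}(\mathcal{O}_X,-)]$ with $[\mathcal{O}_X\otimes^L_{\mathcal{O}_Y}(-)]$, and its compatibility with feeding in the complex $Rh_*\mathcal{F}$) and keep track of which refined Gysin map the symbol $f^!$ denotes on each side; after that, every step is a standard compatibility --- flat base change, the projection formula, associativity of $\otimes^L$. Should one prefer to stay inside abelian categories, the same argument can be run with an explicit resolution $\mathcal{P}_\bullet\to\mathcal{O}_X$ by locally free $\mathcal{O}_Y$-modules (locally a Koszul complex) and the two Grothendieck spectral sequences computing the cohomology of $g^*\mathcal{P}_\bullet\otimes Rh_*\mathcal{F}$ in the two orders; the price is having to glue the local resolutions, which is why I favour the formulation above.
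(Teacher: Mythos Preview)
The paper does not prove this lemma at all; it merely cites it as Lemma~3.1 of Anderson--Payne \cite{anderson2015operational}. Your argument is correct and is essentially the standard one: recast the refined Gysin map as $[\mathcal{O}_X\otimes^L_{\mathcal{O}_Y}(-)]$, factor the l.c.i.\ morphism as a regular embedding followed by a smooth map (using the paper's Lemma~\ref{lem:k-ass} for $f^!=s^!\circ p^!$), and then invoke flat base change for the smooth factor and the projection formula with the perfect (locally Koszul) complex $Lg^*\mathcal{O}_X$ for the regular-embedding factor. The only point worth making explicit is the one you flag yourself: the two occurrences of $f^!$ in the statement refer to refined Gysin maps over different bases ($Y'\to Y$ on the left, $Y''\to Y$ on the right), and your identification of $Rh_*$ with $Rh'_*$ on complexes supported on $X''$ is what mediates between them. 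Nothing is missing.
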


\begin{lemma}[Lemma 3.2 of \cite{anderson2015operational}]
  \label{lem:kcom}
  Consider following Cartesian diagrams \cref{cd:2}:
  \begin{equation}
    \label{cd:2}
    \begin{tikzcd}
      X'' \ar{r}\ar{d} & Y'' \ar{r}\ar{d} & Z'' \ar{d}{h} \\
      X'  \ar{r}\ar{d} & Y'  \ar{r}\ar{d} & Z'            \\
      X   \ar{r}{f}    & Y.
    \end{tikzcd}
  \end{equation}
  such that $h$ and $f$ are l.c.i. morphisms. Then
  $$f^{!}h^{!}=h^{!}f^{!}:K(Y')\to K(X'').$$
\end{lemma}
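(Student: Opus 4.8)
The plan is to show that, evaluated on a generator $[\mathcal F]$ with $\mathcal F\in\mathrm{Coh}(Y')$, both $f^!h^![\mathcal F]$ and $h^!f^![\mathcal F]$ are equal to the $K$-theoretic Euler characteristic of one and the same bounded complex of coherent sheaves on $X''$; the equality of the two composites then drops out of the two spectral sequences of the double complex producing that complex.

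First I would record the fiber-product identifications that make the statement well-posed: chasing the given Cartesian squares shows $X''=X\times_Y Y''$ and $X''=Z''\times_{Z'}X'$, so that $f$ base-changes to a map over $Y''\to Y$ with fiber product $X''$, and $h$ base-changes to a map over $X'\to Z'$ with fiber product $X''$. In particular every iterated-$\mathrm{Tor}$ sheaf produced below is a coherent sheaf on $X''$, and the two composites are literally computed on these Cartesian squares. Since $f$ and $h$ are l.c.i.\ morphisms, $\mathcal O_X$ has finite Tor-dimension over $\mathcal O_Y$ and $\mathcal O_{Z''}$ has finite Tor-dimension over $\mathcal O_{Z'}$, so we may choose bounded equivariant resolutions $P_\bullet\twoheadrightarrow\mathcal O_X$ by locally free $\mathcal O_Y$-modules and $Q_\bullet\twoheadrightarrow\mathcal O_{Z''}$ by locally free $\mathcal O_{Z'}$-modules (such finite locally free resolutions exist because all schemes here are quasi-projective; in general one argues Zariski-locally and glues, or uses bounded resolutions by flat sheaves). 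Pulling $P_\bullet$ and $Q_\bullet$ back to $Y'$ along $Y'\to Y$ and $Y'\to Z'$ and tensoring over $\mathcal O_{Y'}$ gives the bounded double complex
\[
  C_{\bullet\bullet}\ =\ (P_\bullet|_{Y'})\otimes_{\mathcal O_{Y'}}(Q_\bullet|_{Y'})\otimes_{\mathcal O_{Y'}}\mathcal F
\]
of coherent sheaves; set $T_\bullet=\mathrm{Tot}(C_{\bullet\bullet})$.

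Next I would run the two spectral sequences of $C_{\bullet\bullet}$. Taking homology first in the $Q$-direction, flatness of each $P_i|_{Y'}$ identifies the first page with $P_i|_{Y'}\otimes_{\mathcal O_{Y'}}\mathrm{Tor}_j^{\mathcal O_{Z'}}(\mathcal O_{Z''},\mathcal F)$, and then taking homology in the $P$-direction (and using that $P_\bullet|_{Y'}$ computes $\mathrm{Tor}^{\mathcal O_Y}(\mathcal O_X,-)$ on $\mathcal O_{Y'}$-modules) gives
\[
  {}^{\mathrm I}E^2_{ij}\ =\ \mathrm{Tor}_i^{\mathcal O_Y}\bigl(\mathcal O_X,\ \mathrm{Tor}_j^{\mathcal O_{Z'}}(\mathcal O_{Z''},\mathcal F)\bigr);
\]
by the definition of the refined Gysin maps — the inner $\mathrm{Tor}$ is $h^![\mathcal F]$ on the Cartesian square $Y''=Z''\times_{Z'}Y'$, the outer $\mathrm{Tor}$ applies $f^!$ on the Cartesian square $X''=X\times_Y Y''$ — we get $\sum_{i,j}(-1)^{i+j}[{}^{\mathrm I}E^2_{ij}]=f^!h^![\mathcal F]$. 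Running the other spectral sequence, i.e. taking homology first in the $P$-direction and using flatness of the $Q_j|_{Y'}$, yields
\[
  {}^{\mathrm{II}}E^2_{ij}\ =\ \mathrm{Tor}_j^{\mathcal O_{Z'}}\bigl(\mathcal O_{Z''},\ \mathrm{Tor}_i^{\mathcal O_Y}(\mathcal O_X,\mathcal F)\bigr),
\]
with $\sum_{i,j}(-1)^{i+j}[{}^{\mathrm{II}}E^2_{ij}]=h^!f^![\mathcal F]$. Both spectral sequences abut to $\mathcal H_\bullet(T_\bullet)$, a bounded-degree family of coherent sheaves on $X''$. Since in $K(X'')$ the class of a bounded complex of coherent sheaves equals the alternating sum of its cohomology sheaves, and since each differential $d^r$ shifts total degree by $1$ while — the double complex being bounded — every total degree carries only finitely many nonzero terms, the Euler characteristic $\sum_{i,j}(-1)^{i+j}[E^r_{ij}]$ is unchanged as $r$ increases and is the same for both spectral sequences, namely $\sum_n(-1)^n[\mathcal H_n(T_\bullet)]$. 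Hence
\[
  f^!h^![\mathcal F]\ =\ \sum_{i,j}(-1)^{i+j}\bigl[{}^{\mathrm I}E^2_{ij}\bigr]\ =\ \sum_n(-1)^n[\mathcal H_n(T_\bullet)]\ =\ \sum_{i,j}(-1)^{i+j}\bigl[{}^{\mathrm{II}}E^2_{ij}\bigr]\ =\ h^!f^![\mathcal F]
\]
in $K(X'')$. As $[\mathcal F]$ was an arbitrary generator of $K(Y')$ and every construction above is functorial, hence $G$-equivariant, the lemma follows.

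I expect the main obstacle to be bookkeeping rather than ideas: carefully checking the fiber-product identifications so that each iterated-$\mathrm{Tor}$ sheaf genuinely lives on $X''$ and the defining formulas for $f^!$ and $h^!$ apply on the correct Cartesian squares, and verifying that the signed Euler characteristic is preserved along a bounded spectral sequence and is insensitive to the choice between the two spectral sequences of the double complex. Everything else — the existence of bounded (equivariant) locally free resolutions, and the exactness of tensoring with a locally free sheaf — is routine in the quasi-projective setting of the paper.
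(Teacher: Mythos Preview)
The paper does not prove this lemma; it simply cites Lemma~3.2 of Anderson--Payne \cite{anderson2015operational}. Your double-complex argument with the two spectral sequences is the standard way to establish this commutativity, and is essentially what the cited reference does.

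One technical caveat worth flagging: the phrase ``choose bounded equivariant resolutions $P_\bullet\twoheadrightarrow\mathcal O_X$ by locally free $\mathcal O_Y$-modules'' is not literally correct when $f$ is not a closed immersion, since then $\mathcal O_X$ is not a coherent $\mathcal O_Y$-module. You hedge this appropriately (``in general one argues Zariski-locally\ldots or uses bounded resolutions by flat sheaves''), and the clean fix is to factor $f=p\circ i$ with $i$ a regular embedding and $p$ smooth, take the Koszul resolution for $i$, and note that the smooth part contributes only an exact pullback. With that adjustment the argument goes through.
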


\begin{lemma}
  \label{lem:flat}
  Consider the following Cartesian diagrams \cref{cd:30}:
  \begin{equation}
    \label{cd:30}
    \begin{tikzcd}
      X'' \ar{r} \ar{d}{f''} & X' \ar{r} \ar{d}{f'} & X \ar{d}{f} \\
      Y'' \ar{r} & Y' \ar{r}{s} & Y, \\
    \end{tikzcd}
  \end{equation}
  where $f$ and $f'$ are l.c.i morphisms. If one of the $f$ and $s$ is
  flat, 
  $$f^{!}=f'^{!}:K(Y'')\to K(X'')$$ 
  and if $f$ is flat, $f^{!}=f''^{*}$.
\end{lemma}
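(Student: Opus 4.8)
The plan is to reduce everything to the statement that refined Gysin maps for a flat l.c.i. morphism agree with flat pullback, and that flat pullback is compatible with base change. First I would treat the last assertion: if $f$ is flat, then in the Cartesian square defining $f^{!}$ one has $Tor_{i}^{\mathcal{O}_{Y}}(\mathcal{O}_{X},\mathcal{F})=0$ for $i>0$ by flatness of $\mathcal{O}_{X}$ over $\mathcal{O}_{Y}$, so $f^{!}([\mathcal{F}])=[\mathcal{O}_{X}\otimes_{\mathcal{O}_{Y}}\mathcal{F}]=[f''^{*}\mathcal{F}]$; thus $f^{!}=f''^{*}$ as maps $K(Y'')\to K(X'')$. (Here I am using that the outer rectangle of \cref{cd:30} is Cartesian, so the pullback computing $f^{!}$ along the base change $Y''\to Y$ is exactly $X''\to Y''$.) This also handles one half of the first claim in the flat-$f$ case, since $f'^{!}$ likewise reduces to $f'^{*}$ and flat pullbacks compose: $f''^{*}=f'^{*}\circ(\text{flat pullback along }Y''\to Y')$... wait, more precisely $f^{!}=f''^{*}$ and $f'^{!}=f'^{*}$, and compatibility of flat pullback with the base-change square $X''\to Y''\to Y'$ gives $f''^{*}=(X''\to Y'')^{*}$ which need not literally equal $(Y''\to Y')^{*}\circ f'^{*}$ unless we are careful about which square we pull back along — so the cleaner route is to observe that both $f^{!}$ and $f'^{!}$ applied to $\mathcal{F}\in K(Y'')$ compute the derived tensor of $\mathcal{F}$ with the structure sheaf of $X''$ over the structure sheaf of $Y''$, since $X''$ is the fiber product in both cases.

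For the case where $s$ (rather than $f$) is flat, I would use \cref{lem:kcom} (commutativity of refined Gysin maps) together with the trivial Gysin map attached to the flat morphism $s$. Consider enlarging \cref{cd:30} by viewing $s:Y'\to Y$ as a flat l.c.i. morphism; its refined Gysin map $s^{!}$ is just flat pullback. Then $f'$ is, up to the identifications in \cref{cd:30}, the base change of $f$ along $s$, and $f'^{!}$ is defined exactly as the refined Gysin map of $f$ relative to that base change. The content is then: refined Gysin maps are stable under flat base change of the l.c.i. morphism, i.e. $f^{!}$ computed relative to $Y''\to Y$ and $f'^{!}$ computed relative to $Y''\to Y'$ agree on $K(Y'')$. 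This again comes down to the Tor computation being insensitive to whether one resolves over $\mathcal{O}_{Y}$ or over $\mathcal{O}_{Y'}$: flatness of $s$ ensures $\mathcal{O}_{X'}=\mathcal{O}_{X}\otimes^{L}_{\mathcal{O}_{Y}}\mathcal{O}_{Y'}$ has no higher Tor, so $\mathcal{O}_{X}\otimes^{L}_{\mathcal{O}_{Y}}\mathcal{F}$ and $\mathcal{O}_{X'}\otimes^{L}_{\mathcal{O}_{Y'}}\mathcal{F}$ have the same class in $K(Y'')$ for any $\mathcal{F}$ supported on $Y''$. Concretely I would pick a finite locally free resolution of $\mathcal{O}_{X}$ over an affine open of $Y$ (possible since $f$ is l.c.i., hence of finite Tor-dimension), pull it back along $s$ to get such a resolution of $\mathcal{O}_{X'}$ over $Y'$, and tensor with $\mathcal{F}$; the two complexes of $\mathcal{O}_{Y''}$-modules coincide.

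The one genuinely delicate point — and what I expect to be the main obstacle — is bookkeeping with the equivariant structures and with the fact that $K^{G}(-)$ here denotes the $G$-theory of coherent sheaves rather than the $K$-theory of vector bundles, so I cannot freely pick global finite locally free resolutions; I must argue Zariski-locally and invoke that the alternating sums of Tor sheaves glue, exactly as in the well-definedness discussion following the definition of $f^{!}$. I would phrase the whole argument at the level of the derived tensor product $\mathcal{O}_{X}\otimes^{L}_{\mathcal{O}_{Y}}(-)$, note that $f$ l.c.i. guarantees this is a bounded functor valued in coherent sheaves, and then the equalities $f^{!}=f'^{!}$ and $f^{!}=f''^{*}$ are immediate from the projection/associativity identities for derived tensor products plus the flatness hypothesis killing the relevant higher Tor's. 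Alternatively, one can deduce the $s$-flat case formally from \cref{lem:kcom} by treating $s^{!}$ as an l.c.i. Gysin map and checking it acts as the identity on the appropriate $K$-groups, but the hands-on Tor argument above is cleaner and self-contained.
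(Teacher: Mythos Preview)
Your argument is correct and rests on the same idea as the paper's: flatness of either $f$ or $s$ forces $Tor_q^{\mathcal{O}_Y}(\mathcal{O}_{Y'},\mathcal{O}_X)=0$ for $q>0$, so $\mathcal{O}_{X'}\simeq\mathcal{O}_X\otimes^{L}_{\mathcal{O}_Y}\mathcal{O}_{Y'}$ and the two alternating Tor sums for $\mathcal{F}$ coincide. The paper packages this uniformly in one line via the base-change Tor spectral sequence
\[
Tor_{p+q}^{\mathcal{O}_{Y}}(\mathcal{F},\mathcal{O}_{X})\Rightarrow Tor_{p}^{\mathcal{O}_{Y'}}\bigl(\mathcal{F},Tor_{q}^{\mathcal{O}_{Y}}(\mathcal{O}_{Y'},\mathcal{O}_{X})\bigr),
\]
whose degeneration under either flatness hypothesis immediately yields $f^{!}=f'^{!}$ (and the $f$-flat case of $f^{!}=f''^{*}$); this replaces your separate case analysis, the local-resolution bookkeeping, and the detour through \cref{lem:kcom} with a single collapse argument, though the underlying content is identical.
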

\begin{proof}
  Recall the Tor spectral sequence
     \cite[\href{https://stacks.math.columbia.edu/tag/061Y}{Tag 061Y}]{stacks-project}

  \begin{equation}
    \label{eq:tor}
  Tor_{p+q}^{\mathcal{O}_{Y}}(\mathcal{F},\mathcal{O}_{X})\Rightarrow Tor_{p}^{\mathcal{O}_{Y'}}(\mathcal{F},Tor_{q}^{\mathcal{O}_{Y}}(\mathcal{O}_{Y'},\mathcal{O}_{X})).  
  \end{equation}
  Since $f$ or $s$ is flat, we have
  $Tor_{q}^{\mathcal{O}_{Y}}(\mathcal{O}_{Y'},\mathcal{O}_{X})=0$ for
  $q>0$ and
  $Tor_{0}^{\mathcal{O}_{Y}}(\mathcal{O}_{Y'},\mathcal{O}_{X})=\mathcal{O}_{X'}$. Hence for any $\mathcal{F}\in K(Y'')$
  \begin{align*}
    \sum_{i=0}^{\infty }[(-1)^{i}Tor_{i}^{\mathcal{O}_{Y}}(\mathcal{F},\mathcal{O}_{X})] & =\sum_{p=0}^{\infty }\sum_{q=0}^{\infty }[(-1)^{p+q}Tor_{p}^{\mathcal{O}_{Y'}}(\mathcal{F},Tor_{q}^{\mathcal{O}_{Y}}(\mathcal{O}_{Y'},\mathcal{O}_{X}))] \\
    & =\sum_{i=0}^{\infty }[(-1)^{i}Tor_{i}^{\mathcal{O}_{Y'}}(\mathcal{F},\mathcal{O}_{X'})].
  \end{align*}

\end{proof}
Another corollary of the Tor spectral sequence \cref{eq:tor} is the
following lemma:
\begin{lemma}
  \label{lem:k-ass}
  Consider the following Cartesian diagram:
  \begin{equation*}
    \begin{tikzcd}
      X' \ar{r} \ar{d} & X \ar{d}{f} \\
      Y' \ar{r} \ar{d} & Y \ar{d}{g} \\
      Z' \ar{r} & Z 
    \end{tikzcd}
  \end{equation*}
  where $f$, $g$ and $h=g\circ f$ are l.c.i morphisms. Then
  $h^{!}=f^{!}\circ g^{!}:K(Z')\to K(X')$.
\end{lemma}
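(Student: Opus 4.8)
The plan is to reduce \cref{lem:k-ass} to the associativity of Tor under the base-change spectral sequence already recorded in \cref{eq:tor}, applied twice. Concretely, write $X'=Z'\times_Z X$, $Y'=Z'\times_Z Y$; since $h=g\circ f$, the composite square is Cartesian, so $X'=Y'\times_Y X$ as well, and all three refined Gysin maps are defined because $f,g,h$ are l.c.i. (hence of finite Tor-dimension). For a class $[\mathcal{F}]\in K(Z')$ I would compute $h^!([\mathcal{F}])=\sum_i(-1)^i[\mathit{Tor}_i^{\mathcal{O}_Z}(\mathcal{O}_X,\mathcal{F})]$ and compare it with $f^!g^!([\mathcal{F}])=\sum_{p,q}(-1)^{p+q}[\mathit{Tor}_p^{\mathcal{O}_Y}(\mathcal{O}_X,\mathit{Tor}_q^{\mathcal{O}_Z}(\mathcal{O}_Y,\mathcal{F}))]$.

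The key input is a change-of-rings (composite-functor) spectral sequence of the form
\begin{equation*}
  \mathit{Tor}_p^{\mathcal{O}_Y}\!\big(\mathcal{O}_X,\mathit{Tor}_q^{\mathcal{O}_Z}(\mathcal{O}_Y,\mathcal{F})\big)\Rightarrow \mathit{Tor}_{p+q}^{\mathcal{O}_Z}(\mathcal{O}_X,\mathcal{F}),
\end{equation*}
which is exactly the spectral sequence \cref{eq:tor} read with $(Z,Y,X,\mathcal{F})$ in the roles of $(Y,Y',X,\mathcal{F})$ — here one uses that $\mathcal{O}_X$ is an $\mathcal{O}_Y$-module via $f$ and that $\mathcal{O}_Y\otimes^{\mathbf{L}}_{\mathcal{O}_Z}\mathcal{O}_X\simeq \mathcal{O}_X\otimes^{\mathbf{L}}_{\mathcal{O}_Z}\mathcal{O}_Z$ computes $\mathit{Tor}^{\mathcal{O}_Z}(\mathcal{O}_X,-)$ after applying $-\otimes^{\mathbf{L}}_{\mathcal{O}_Y}\mathcal{O}_X$. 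As in the proof of \cref{lem:flat}, a spectral sequence degenerating in $K$-theory lets me equate alternating sums: since the spectral sequence has a finite horizontal and vertical range (finite Tor-dimension on both sides), the Euler characteristic in $K$-theory of the $E_2$-page equals that of the abutment, giving
\begin{equation*}
  \sum_{p,q}(-1)^{p+q}\big[\mathit{Tor}_p^{\mathcal{O}_Y}(\mathcal{O}_X,\mathit{Tor}_q^{\mathcal{O}_Z}(\mathcal{O}_Y,\mathcal{F}))\big]=\sum_i(-1)^i\big[\mathit{Tor}_i^{\mathcal{O}_Z}(\mathcal{O}_X,\mathcal{F})\big].
\end{equation*}
The left side is $f^!g^!([\mathcal{F}])$ and the right side is $h^!([\mathcal{F}])$, which is the claim. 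The same argument respects the $G$-equivariant structure, since all the functors and the spectral sequence are $G$-equivariant when the maps are.

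The main obstacle is bookkeeping rather than conceptual: one must make sure the change-of-rings spectral sequence is the correct instance of \cref{eq:tor} after the right identifications (in particular that $\mathcal{O}_{Y'}$ there plays the role of $\mathcal{O}_Y$ here, and $\mathcal{F}$ on $Z'$ is pulled back/pushed forward appropriately so that all Tor's are taken over the structure sheaves of the base, not of the total spaces), and that finiteness of Tor-dimension of $f$ and $g$ forces the spectral sequence to live in a bounded region so the $K$-theory Euler-characteristic identity is legitimate. A secondary point to check is that $h=g\circ f$ being l.c.i. is not an extra hypothesis one needs to verify case-by-case here — it is assumed — but one should still note that the composite square being Cartesian is automatic, so $h^!$ genuinely is the refined Gysin map for the outer rectangle. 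Once these identifications are pinned down, the proof is a one-line consequence of \cref{eq:tor} exactly parallel to \cref{lem:flat}.
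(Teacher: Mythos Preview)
Your proposal is correct and is exactly the approach the paper takes: the paper does not write out a proof at all, but simply states \cref{lem:k-ass} as ``another corollary of the Tor spectral sequence \cref{eq:tor},'' and your argument spells out precisely that corollary. Your remarks about finite Tor-dimension ensuring the Euler-characteristic identity and about the equivariant compatibility are the right points to make explicit.
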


\subsection{Refined Gysin Maps Between Vector Bundles}\label{sec:vec}
 Let
  \begin{equation*}
    \begin{tikzcd}
      0 \ar{r} & W_{1} \ar{r}{g'} \ar{d}{\phi_{1}} & W_{2} \ar{r}{f'} \ar{d}{\phi_{2}} & W_{3} \ar{r} \ar{d}{\phi_{3}} & 0 \\
      0 \ar{r} & V_{1} \ar{r}{g}  & V_{2} \ar{r}{f} & V_{3} \ar{r} & 0  
    \end{tikzcd}
  \end{equation*}
  be a commutative diagram of vector bundles over $X$ where all the
  rows are exact. Let $X_{3}=\phi_{3}^{-1}(i_{V_{3}}(X))$,
  $X_{2}=\phi_{2}^{-1}(i_{V_{2}}(X))$ and $Y=f'^{-1}(X_{3})$. Then $Y$ is an
  affine bundle over $X_{3}$. The following commutative diagram:
  \begin{equation*}
    \begin{tikzcd}
      Y\ar{r}{\phi_{2}} \ar{d}{pr_{W_{2}}} & V_{2} \ar{d}{f} \\
      X \ar{r}{i_{V_{3}}} & V_{3}
    \end{tikzcd}
  \end{equation*}
induces a l.c.i morphism $\psi:Y\to V_{1}\times_{X}
X_{3}$ and the following cartesian diagram:  
\begin{equation}
    \label{di:final1}
    \begin{tikzcd}
      X_{2}\ar{r} \ar{d} & Y \ar{d}{\psi} \\
      X_{3} \ar{r}{i_{V_{1}}\times id} & V_{1}\times _{X}X_{3}   
    \end{tikzcd}
  \end{equation}

\begin{lemma}
  \label{lem:kass}
  $\psi^{!}\circ \phi_{3}^{!}=\phi_{2}^{!}:K(X)\to K(X_{2})$.
\end{lemma}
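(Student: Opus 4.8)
The goal is to prove $\psi^{!}\circ \phi_{3}^{!}=\phi_{2}^{!}$ as maps $K(X)\to K(X_{2})$, where $\psi$ is the l.c.i. morphism extracted from the square relating $\phi_2$ and $f$. The natural strategy is to factor $\phi_2$ through an intermediate vector bundle in a way that separates the "$\phi_3$ part" from the "$\psi$ part," and then apply the compatibility lemmas for refined Gysin maps already established (\cref{lem:1.1}, \cref{lem:kcom}, \cref{lem:flat}, \cref{lem:k-ass}). Concretely, the exact sequences of vector bundles give a short exact sequence $0\to W_1\to W_2\to W_3\to 0$ and similarly for the $V_i$, and the map $\psi\colon Y\to V_1\times_X X_3$ was built precisely so that the diagram \cref{di:final1} is Cartesian with $X_2$ on top. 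So the plan is: realize $\phi_2^{!}$ as a composite of Gysin maps through the fibered product $V_1\times_X X_3$, identify one factor with $\phi_3^{!}$ (pulled back appropriately) and the other with $\psi^{!}$, and invoke functoriality.

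**Key steps.**

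First I would record the factorization of $\phi_2\colon W_2\to V_2$. Since $0\to W_1\to W_2\to W_3\to 0$ and the rows are compatible, there is a map $W_2\to V_1\times_X X_3$ (using $\phi_2$ on the $V_1$-factor via the splitting of the bottom row over the locus where things land in $X_3$, and the map $W_2\to W_3\to X_3$ on the other factor) — but more to the point, the text already hands us the l.c.i.\ morphism $\psi\colon Y\to V_1\times_X X_3$ together with the Cartesian square \cref{di:final1}, and $Y=f'^{-1}(X_3)$ sits inside $W_2$ via an affine-bundle projection $pr_{W_2}\colon Y\to X$. So the second step is to set up the tower of Cartesian squares
\begin{equation*}
  \begin{tikzcd}
    X_{2}\ar{r}\ar{d} & Y\ar{r}\ar{d} & W_2\ar{d}{\phi_2}\\
    X_{3}\ar{r}{i_{V_1}\times id}\ar{d} & V_1\times_X X_3\ar{r}\ar{d} & V_2\ar{d}{f}\\
    X_{3}\ar{r}{i_{V_1}} & V_1\ar{r} & V_3
  \end{tikzcd}
\end{equation*}
where the right column composes to $\phi_2$ and where $X_3=\phi_3^{-1}(i_{V_3}(X))$ identifies the bottom-left square as the one computing $\phi_3^{!}$. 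Third, I would apply \cref{lem:k-ass} (composition of Gysin maps along an l.c.i.\ tower) to the outer column: $\phi_2^{!}=(\text{Gysin of the }Y\text{-square})\circ(\text{Gysin of the }X_3\text{-square})$. Fourth, I would identify the lower Gysin map with $\phi_3^{!}$ — here one uses that $i_{V_1}\times id$ pulled back along the flat projection $V_1\times_X X_3\to V_1$ recovers $i_{V_1}$, so by \cref{lem:flat} the relevant Gysin map coincides with $\phi_3^{!}$; and the upper Gysin map is by definition $\psi^{!}$, since \cref{di:final1} is exactly its defining Cartesian square. Finally I would assemble: $\phi_2^{!}=\psi^{!}\circ\phi_3^{!}$.

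**The main obstacle.**

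The delicate point is \emph{not} the formal bookkeeping but verifying that the middle row genuinely fits into commuting Cartesian squares with the claimed base maps — i.e.\ that $V_1\times_X X_3\to V_2$ is the right arrow, that it is flat (or l.c.i.) so \cref{lem:flat}/\cref{lem:k-ass} apply, and that $\psi$ as defined in the excerpt really is the vertical map making the top-left square \cref{di:final1} Cartesian over $V_1\times_X X_3$ rather than over some other base. Establishing flatness of the relevant projections and the Cartesianness of the lower square (which is where the hypothesis that all three rows are exact, hence $V_2\cong V_1\times_X V_3$ as schemes over $V_3$ after restricting, gets used) is the technical heart. Once those identifications are in place, the conclusion is immediate from \cref{lem:k-ass} and \cref{lem:flat}. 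I would also double-check the affine-bundle claim "$Y$ is an affine bundle over $X_3$" since it guarantees $Y\to X$ has no higher Tor and keeps all the maps in sight l.c.i., which is what licenses the repeated use of the compatibility lemmas.
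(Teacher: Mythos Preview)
Your overall strategy—decompose both $\phi_2^{!}$ and $\psi^{!}$ via \cref{lem:kcom}, \cref{lem:flat}, \cref{lem:k-ass} and then match the pieces—is exactly the paper's, but the tower you wrote down does not do the job. Two concrete problems. First, there is no map $V_1\times_X X_3\to V_2$ making the right-hand squares Cartesian. The only natural candidate sends $(v_1,x_3)\mapsto g(v_1)$, but then the fibre product of $f\colon V_2\to V_3$ with the zero map $V_1\to V_3$ is $V_1\times_X V_1$, not $V_1\times_X X_3$, and the fibre product of $\phi_2$ with that same map is $Y\times_X X_3$, not $Y$. Second, no square in your diagram is the one defining $\phi_3^{!}$: that Gysin map is built from $\phi_3\colon W_3\to V_3$ on the right and $i_{V_3}\colon X\to V_3$ on the bottom, and neither $W_3$ nor $i_{V_3}$ appears anywhere in your tower. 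Consequently the step ``apply \cref{lem:k-ass} to the outer column to get $\phi_2^{!}$ as a composite'' fails as written—the outer right column composes to $f\circ\phi_2$, not $\phi_2$, and the lower Gysin map you obtain has no reason to be $\phi_3^{!}$.

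The paper avoids a single three-row tower altogether. It first inserts the identity $i_{V_1}^{!}\circ pr_{V_1}^{!}=\mathrm{id}$ on $K(X)$, commutes $i_{V_1}^{!}$ past $\phi_2^{!}$ via \cref{lem:kcom}, and then applies \cref{lem:k-ass} to the two factorizations $f\circ\phi_2=\phi_3\circ f'$ of the composite $W_2\to V_3$, using the genuine tower $Y\to X_3\to X$ sitting over $W_2\xrightarrow{f'}W_3\xrightarrow{\phi_3}V_3$, to obtain $\phi_2^{!}=i_{V_1}^{!}\circ f'^{!}\circ\phi_3^{!}$. A parallel insertion-and-swap starting from $K(X_3)$ yields $\psi^{!}=i_{V_1}^{!}\circ f'^{!}$, and the lemma follows. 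The moral is that the map $f'\colon W_2\to W_3$ is what brings $\phi_3$ into the picture, and it is absent from your diagram; once you route through $f'$ and $W_3$, the argument goes through with the lemmas you already identified.
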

\begin{proof}
  Consider the following two Cartesian diagrams:
  
  \begin{minipage}[t]{0.5\linewidth}
  \begin{equation*}
    \begin{tikzcd}
      Y \ar{r} \ar{d} & X_{3} \ar{r}\ar{d}&  X \ar{d} \\
      W_{2} \ar{r}{f'} & W_{3} \ar{r}{\phi_{3}} & V_{3}
    \end{tikzcd}
  \end{equation*}
  \end{minipage}
  \begin{minipage}[t]{0.5\linewidth}
    \begin{equation*}
      \begin{tikzcd}
        X_{2} \ar{r} \ar{d} & X \ar{d}{i_{V_{1}}} & \\
        Y \ar{r} \ar{d} & V_{1} \ar{r}{pr_{V_{1}}} \ar{d}{g} & X \ar{d} \\
        W_{2} \ar{r}{\phi_{2}} & V_{2} \ar{r}{f} & V_{3} 
      \end{tikzcd}
    \end{equation*}
  \end{minipage}
 where $\phi_{3} \circ f'= f\circ \phi_{2}$. Then
 $\phi_{2}^{!}:K(X)\to K(X_{2})$ satisfies
  \begin{align*}
    \phi_{2}^{!} & =\phi_{2}^{!}\circ i_{V_{1}}^{!}\circ pr_{V_{1}}^{!} & K(X_{2}) \xleftarrow{\phi_{2}^{!}} K(X) \xleftarrow{i_{V_{1}}^{!}} K(V_{1}) \xleftarrow{pr_{V_{1}}^{!}} K(X) \\
              &= i_{V_{1}}^{!}\circ \phi_{2}^{!}\circ f^{!} &K(X_{2}) \xleftarrow{i_{V_{1}}^{!}} K(Y) \xleftarrow{\phi_{2}^{!}} K(V_{1})  \xleftarrow{pr_{V_{1}}^{!}} K(X) & \ \text{by Lemma \ref{lem:kcom}}\\
    &=i_{V_{1}}^{!}\circ f'^{!}\circ \phi_{3}^{!} & K(X_{2})\xleftarrow{i_{V_{1}}^{!}} K(Y) \xleftarrow{f'^{!}} K(X_{3}) \xleftarrow{\phi^{!}_{3}} K(X) & \ \text{by Lemma \ref{lem:k-ass}}
  \end{align*}
 On the other hand, consider the following Cartesian diagram:
  \begin{equation*}
    \begin{tikzcd}
      X_{2} \ar{r} \ar{d} & X_{3}\ar{r} \ar{d} & X \ar{d}{i_{V_{1}}} \\
      Y \ar{r}{\psi}  &  V_{1}\times _{X}X_{3} \ar{r} \ar{d}{pr_{V_{1}}|_{X_{3}}} & V_{1}  \ar{d}{pr_{V_{1}}} \\
      & X_{3} \ar{r} & X
    \end{tikzcd}
  \end{equation*}
  and thus
  \begin{align*}
    \psi^{!}&=\psi^{!}\circ i_{V_{1}}^{!}\circ pr_{V_{1}}^{!}  & K(X_{2}) \xleftarrow{\psi^{!}} K(X_{3}) \xleftarrow{i_{V_{1}}^{!}}K(V_{1}\times_{X}X_{3}) \xleftarrow{pr_{V_{1}}^{!}} K(X_{2}) \\
         &=i_{V_{1}}^{!}\circ \psi^{!}\circ pr_{V_{1}}^{!}& K(X_{3})\xleftarrow{i_{V_{1}}^{!}}K(Y)\xleftarrow{\psi^{!}}K(Y_{1}\times_{X}X_{3})\xleftarrow{pr_{V_{1}}^{!}} K(X_{2}) &\ \text{by Lemma \ref{lem:kcom}}\\
            &=i_{V_{1}}^{!}\circ f'^{!} &K(X_{3})\xleftarrow{i_{V_{1}}^{!}} K(Y) \xleftarrow{f'^{!}} K(X_{2}) & \ \text{by Lemma \ref{lem:k-ass}} & \ 
  \end{align*}
  as morphisms from $K(X_{3})$ to $K(X_{2})$. Hence $\psi^{!}\circ \phi_{3}^{!}=\phi_{2}^{!}$.
\end{proof}
\subsection{Localization Theorem}

Let $T=(\mathbb{G}_{m})^{n}$ be the maximal torus of $G$, $X^{T}$ the
fixed locus of $X$ with respect to a given $T$ action and $i_{X}$ the
closed embedding of $X^{T}$ into $X$. We recall following theorems
about the relation between $K^{G}(X)$ and $K^{T}(X^{T})$.

\begin{theorem}[Proposition 31 of \cite{merkurjev2005equivariant}]
  \label{thm:1.3}
  If the commutator group of $G$ is simply connected, then the natural restriction map
  $K^{G}(X)\to K^{T}(X)$ gives rise to an isomorphism
  $$K^{G}(X)=K^{T}(X)^{W}.$$
\end{theorem}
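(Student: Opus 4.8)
The result goes back to Pittie and Steinberg, and the plan is to combine the structure of $R(T)$ as a module over $R(G)$ with a geometric reduction along the flag variety. Write $R(G)=K^{G}(\mathrm{Spec}\,k)$ and $R(T)=K^{T}(\mathrm{Spec}\,k)$, so that $K^{T}(X)$ is an $R(T)$-module and $K^{G}(X)$ an $R(G)$-module; recall also that $R(G)=R(T)^{W}$ for any connected reductive $G$, so the restriction map $K^{G}(X)\to K^{T}(X)$ is $R(G)$-linear and visibly lands in $K^{T}(X)^{W}$. The hypothesis that the commutator subgroup is simply connected is used through the Pittie--Steinberg theorem: under it, $R(T)$ is a \emph{free} $R(G)$-module of rank $|W|$, with a basis $\{e_{w}\}_{w\in W}$ that may be chosen so that $e_{1}=1$; in particular $R(G)\hookrightarrow R(T)$ is split as $R(G)$-modules, with free complement.

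First I would carry out the geometric reduction. Fix a Borel $B\supseteq T$ with unipotent radical $U$. Since $U$ is an iterated extension of copies of $\mathbb{G}_{a}$ and $\mathbb{G}_{a}$-equivariant $K$-theory agrees with ordinary $K$-theory, the restriction map $K^{B}(Y)\to K^{T}(Y)$ is an isomorphism for every $B$-scheme $Y$. Combining this (for $Y=X$ with the $G$-action restricted) with the induction isomorphism $K^{B}(X)\cong K^{G}(G\times_{B}X)$ and the identification $G\times_{B}X\cong G/B\times X$, $(g,x)\mapsto(gB,gx)$, yields $K^{T}(X)\cong K^{G}(G/B\times X)$, under which the restriction map becomes pullback $\pi^{*}$ along the projection $\pi\colon G/B\times X\to X$. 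The Bruhat decomposition presents $G/B$ as filtered by $B$-stable closed subsets with affine-space strata, so $K^{G}(G/B)$ is a free $R(G)$-module (on Schubert structure sheaves), and the ``cellular fibration'' localization sequences split and assemble into the K\"unneth isomorphism
\[
K^{G}(G/B\times X)\;\cong\;K^{G}(X)\otimes_{R(G)}K^{G}(G/B)\;\cong\;K^{G}(X)\otimes_{R(G)}R(T).
\]
Under this identification, $\pi^{*}$ becomes $a\mapsto a\otimes 1$, and the $W$-action on $K^{T}(X)$ corresponds to the natural $W$-action on the tensor factor $R(T)$ (trivial on $K^{G}(X)$).

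It then remains to prove that $\bigl(K^{G}(X)\otimes_{R(G)}R(T)\bigr)^{W}=K^{G}(X)\otimes 1$; the inclusion $\supseteq$ is clear. For $\subseteq$, I would first observe that $\pi$ is proper with $\chi(G/B,\mathcal{O}_{G/B})=1$, so $\pi_{*}\pi^{*}=\mathrm{id}$ and $\pi^{*}$ is split injective --- which already gives injectivity of $K^{G}(X)\to K^{T}(X)^{W}$ --- and then use the Pittie--Steinberg module structure to get the rest: writing $R(T)=R(G)\cdot 1\oplus C$ with $C$ free and exploiting the compatibility of Steinberg's basis with the $W$-action, one shows that the functor $M\mapsto(M\otimes_{R(G)}R(T))^{W}$ on $R(G)$-modules is naturally isomorphic to the identity, and specializes this to $M=K^{G}(X)$.

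This last point is where I expect the real difficulty to lie. Because $|W|$ is in general not invertible in $R(G)$, the projector onto $W$-invariants cannot be produced by averaging, and the naive argument --- applying the right-exact functor $-\otimes_{R(G)}K^{G}(X)$ to $0\to R(T)^{W}\to R(T)\to\prod_{w\in W}R(T)$ --- fails because $K^{G}(X)$ need not be flat over $R(G)$. Pushing the $W$-invariant computation through integrally is precisely what forces one into the Pittie--Steinberg structure theory, and it is the one step where the hypothesis on the commutator subgroup is genuinely needed: it cannot simply be dropped.
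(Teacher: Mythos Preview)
The paper does not prove this statement at all: it is quoted verbatim as Proposition~31 of \cite{merkurjev2005equivariant} and used as a black box, so there is no ``paper's own proof'' to compare your proposal against.

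That said, your sketch is the standard route to this result and is essentially correct in outline. A couple of points deserve care if you flesh it out. First, the assertion that under the K\"unneth isomorphism $K^{T}(X)\cong K^{G}(X)\otimes_{R(G)}R(T)$ the $W$-action becomes the action on the $R(T)$ factor alone is not automatic: the $W$-action on $K^{T}(X)$ is defined via the normalizer $N_{G}(T)$, and matching it with the $R(T)$-module structure through the chain $K^{T}(X)\cong K^{B}(X)\cong K^{G}(G/B\times X)$ requires an argument (this is where Demazure-type operators or an explicit analysis of the Schubert basis typically enter). Second, the step you flag as the hard one --- showing $(M\otimes_{R(G)}R(T))^{W}=M$ for an arbitrary $R(G)$-module $M$ --- does not follow just from freeness of $R(T)$ over $R(G)$ and the existence of a basis with $e_{1}=1$; one needs a finer statement about how $W$ permutes or transforms the Steinberg basis. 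Merkurjev's argument in the cited reference handles both of these points, and your instinct that the simply-connected hypothesis is genuinely used here (and only here) is correct.
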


\begin{definition}
   The localization of $K^{T}(X)$ is defined by
  $$K^{T}(X)_{loc}=K^{T}(X)\otimes _{Rep(T)}Frac(Rep(T)),$$ where $Frac(Rep(T))$ is the
  fraction field of $Rep(T)$.
\end{definition}
\begin{theorem}[Thomason localization theorem, Theorem 2.2 of \cite{thomason1992formule}]
  \label{thm:1.4}
  The map:
  $$K^{T}(X^{T})_{loc}\xrightarrow{i_{X*}}K^{T}(X)_{loc}$$
  is an isomorphism.
  Moreover, if $i_{X}$ is a regular embedding, then
  $$K^{T}(X)_{loc}\xrightarrow{i^{*}_{X}}K^{T}(X^{T})_{loc}$$
  is also an isomorphism.
\end{theorem}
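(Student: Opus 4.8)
The plan is to deduce both statements from the localization exact sequence in equivariant $K$-theory together with a single vanishing result. Write $Z=X^{T}$, a $T$-invariant closed subscheme, and $U=X\setminus Z$, a $T$-invariant open with $U^{T}=\emptyset$. The localization sequence in $G$-theory gives the right-exact sequence
\[
K^{T}(Z)\xrightarrow{\ i_{X*}\ }K^{T}(X)\to K^{T}(U)\to 0 ,
\]
and it extends to the left with the relative $K$-groups. Since $-\otimes_{\mathrm{Rep}(T)}\mathrm{Frac}(\mathrm{Rep}(T))$ is exact, the first assertion of the theorem is equivalent to the vanishing $K^{T}(U)_{loc}=0$ together with injectivity of $i_{X*}$ after localization; the latter also follows by applying the vanishing to a $T$-invariant neighbourhood of the graph, so the entire content of the first statement is the vanishing $K^{T}(U)_{loc}=0$.

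To prove $K^{T}(U)_{loc}=0$ I would run a Noetherian induction on $U$, using that $U$ admits a finite cover by $T$-invariant affine opens — Sumihiro's theorem for normal $U$, and the equivariant dévissage of Thomason in general. By the localization sequence and Mayer--Vietoris it suffices to treat one $T$-invariant affine $V=\mathrm{Spec}(A)$ with $V^{T}=\emptyset$. The key point is algebraic: $\mathrm{Rep}(T)=\mathbb{Z}[x_{1}^{\pm1},\dots,x_{n}^{\pm1}]$ is a domain, so any nonzero element becomes a unit in $\mathrm{Frac}(\mathrm{Rep}(T))$, and for every nontrivial character $\chi$ the element $1-e^{\chi}$ is nonzero. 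Now $V^{T}=\emptyset$ forces every stabilizer to be a proper closed subgroup $H\subsetneq T$; stratifying $V$ by stabilizer type, each stratum is (up to the nilpotents of its scheme structure) an induced space $T\times^{H}Y$, so $K^{T}(T\times^{H}Y)\cong K^{H}(Y)$ with $\mathrm{Rep}(T)$-module structure factoring through $\mathrm{Rep}(T)\to\mathrm{Rep}(H)$. Since $H\subsetneq T$ there is a nonzero $\chi$ with $\chi|_{H}=0$, hence $1-e^{\chi}$ maps to $0$ in $\mathrm{Rep}(H)$ and therefore annihilates that stratum's contribution. Consequently every finitely generated $\mathrm{Rep}(T)$-submodule of $K^{T}(V)$ is torsion, so $K^{T}(V)_{loc}=0$ and the induction closes.

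For the ``moreover'' part, assume $i_{X}\colon Z=X^{T}\hookrightarrow X$ is a regular embedding with normal bundle $N$. By the first part $i_{X*}$ is an isomorphism after localization, so it is enough to show $i_{X}^{*}$ is an isomorphism after localization, for which I would invoke the self-intersection formula $i_{X}^{*}\circ i_{X*}=\lambda_{-1}(N^{\vee})\cdot(-)$ on $K^{T}(Z)_{loc}$, where $\lambda_{-1}(N^{\vee})=\sum_{i}(-1)^{i}[\wedge^{i}N^{\vee}]$. Because $T$ acts trivially on $Z=X^{T}$ we have $K^{T}(Z)_{loc}=K(Z)\otimes_{\mathbb{Z}}\mathrm{Frac}(\mathrm{Rep}(T))$, and $N$ splits as $\bigoplus_{\chi}N_{\chi}\otimes e^{\chi}$ with all $\chi\neq 0$ on each component of $Z$ (no normal direction is $T$-fixed). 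Then the leading term of $\lambda_{-1}(N^{\vee})$ in the $\gamma$-filtration is $\prod_{\chi}(1-e^{-\chi})^{\operatorname{rk}N_{\chi}}$, a unit in $\mathrm{Frac}(\mathrm{Rep}(T))$, and the correction terms involve positive Chern classes of the $N_{\chi}$, which are nilpotent on the finite-type scheme $Z$; hence $\lambda_{-1}(N^{\vee})$ is a unit in $K^{T}(Z)_{loc}$. Therefore $\lambda_{-1}(N^{\vee})\cdot(-)$ is an automorphism of $K^{T}(Z)_{loc}$ and $i_{X}^{*}=\bigl(\lambda_{-1}(N^{\vee})\cdot(-)\bigr)\circ i_{X*}^{-1}$ is an isomorphism.

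I expect the main obstacle to be the Noetherian-induction step: producing the $T$-invariant affine opens (or the equivalent dévissage) in the generality of arbitrary finite-type $k$-schemes rather than just normal ones, and making the ``stabilizers are proper, hence some $1-e^{\chi}$ acts by zero'' argument uniform enough to conclude that all of $K^{T}(U)$, not merely its finitely generated submodules, dies after localization. This is precisely the technical heart of Thomason's argument; by comparison the self-intersection formula and the invertibility of the $K$-theoretic Euler class of $N$ are formal once one works over the fraction field.
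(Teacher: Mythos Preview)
The paper does not prove this statement: it is quoted verbatim as Theorem~2.2 of Thomason's \emph{Une formule de Lefschetz en $K$-th\'eorie \'equivariante alg\'ebrique} and used as a black box. There is therefore no ``paper's own proof'' to compare against; what you have written is a sketch of Thomason's original argument, and in broad outline it is correct.

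A couple of points to tighten. First, your treatment of injectivity of $i_{X*}$ is hazy: the phrase ``applying the vanishing to a $T$-invariant neighbourhood of the graph'' does not parse as an argument. The clean way is to work with the full long exact localization sequence in equivariant $G$-theory (Quillen's theorem), prove $K_{n}^{T}(U)_{loc}=0$ for all $n$ by the same stabilizer d\'evissage, and then read off that $i_{X*}$ is an isomorphism in every degree; with only the right-exact piece you genuinely only get surjectivity. Second, in the ``moreover'' part your invocation of the $\gamma$-filtration is heavier machinery than needed: since $T$ acts trivially on $Z$, the normal bundle decomposes as $N=\bigoplus_{\chi\neq 0}N_{\chi}$ and $\lambda_{-1}(N^{\vee})=\prod_{\chi}\lambda_{-1}(N_{\chi}^{\vee}\cdot e^{-\chi})$; each factor is a polynomial in $e^{-\chi}$ with constant term $1$ and top term $\pm e^{-r\chi}\det N_{\chi}^{\vee}$, and one checks directly (splitting principle, or a rank induction) that such an element is a unit in $K(Z)\otimes\mathrm{Frac}(\mathrm{Rep}(T))$. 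Your self-diagnosis of the hard step---the equivariant d\'evissage making the vanishing argument go through for arbitrary finite-type schemes rather than just normal ones---is accurate; that is exactly where Thomason's paper does the real work.
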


Given $E$ a rank $n$ locally free sheaf over $X$, let
$[\wedge^{\bullet}(E)]=\sum_{i=0}^{n}(-1)^{i}[\wedge^{i}E]$. One corollary of the localization theorem is that

\begin{lemma}[Lemma 5.1.1 of \cite{ciocan-fontanine07:virtual}]
  Let $X$ be a quasi-projective scheme and assumes $T$ acts on $X$
  trivially. Then for every $T$-equivariant vector bundle $E$, identifying it with its total space, satisfying $E^{T}=X$, the
  element $[\wedge^{\bullet}(E)]$ is invertible in $K^{T}(X)_{loc}$. 
\end{lemma}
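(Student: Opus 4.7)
The plan is to realize multiplication by $[\wedge^{\bullet }(E)]$ on $K^{T}(X)_{loc}$ as the composition $j^{*}\circ j_{*}$, where $j:X\to E^{\vee }$ is the zero section of the dual bundle, and then to invoke Thomason's localization theorem (\cref{thm:1.4}) to conclude that this composition is an automorphism, hence that $[\wedge^{\bullet }(E)]$ is invertible. The first checkpoint is that $E^{T}=X$ implies $(E^{\vee })^{T}=X$: since $T$ acts trivially on $X$, every $T$-equivariant vector bundle decomposes into weight pieces $\bigoplus _{\chi }F_{\chi }\otimes \chi $, and the hypothesis $E^{T}=X$ is equivalent to the absence of the trivial character in the decomposition of $E$. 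Dualizing negates the characters and therefore preserves this vanishing.

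Next, $j$ is a proper, $T$-equivariant regular embedding of codimension $r=\mathrm{rk}(E)$, whose normal bundle is $E^{\vee }$ and whose conormal bundle is $E$. Applying \cref{thm:1.4} to $E^{\vee }$, the hypothesis $(E^{\vee })^{T}=X$ gives that $j_{*}:K^{T}(X)_{loc}\to K^{T}(E^{\vee })_{loc}$ is an isomorphism, and regularity of the embedding yields that $j^{*}:K^{T}(E^{\vee })_{loc}\to K^{T}(X)_{loc}$ is an isomorphism as well. Consequently $j^{*}\circ j_{*}$ is an automorphism of $K^{T}(X)_{loc}$.

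It remains to identify $j^{*}j_{*}$ with multiplication by $[\wedge^{\bullet }(E)]$. Writing $p:E^{\vee }\to X$ for the projection, the Koszul complex
\[
  0\to p^{*}\wedge^{r}E\to \cdots \to p^{*}E\to \mathcal{O}_{E^{\vee }}\to j_{*}\mathcal{O}_{X}\to 0
\]
is a $T$-equivariant locally free resolution of $j_{*}\mathcal{O}_{X}$, and $j^{*}p^{*}=\mathrm{id}$ then gives $j^{*}(j_{*}\mathcal{O}_{X})=\sum _{i=0}^{r}(-1)^{i}[\wedge^{i}E]=[\wedge^{\bullet }(E)]$ in $K^{T}(X)$. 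Combining this with the projection-formula identity $j_{*}\alpha =p^{*}\alpha \cdot j_{*}\mathcal{O}_{X}$ (proved by writing $\alpha =j^{*}p^{*}\alpha $ and applying the projection formula to $1_{X}$), we obtain $j^{*}j_{*}\alpha =\alpha \cdot [\wedge^{\bullet }(E)]$, so $[\wedge^{\bullet }(E)]$ acts as the automorphism $j^{*}j_{*}$ and is therefore a unit. The most delicate point is the equivariance of the Koszul computation together with the weight argument transferring the fixed-locus hypothesis from $E$ to $E^{\vee }$; once those are in place, the conclusion is a routine combination of Thomason localization with the projection formula.
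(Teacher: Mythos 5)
Your proof is correct. Note that the paper offers no argument for this lemma---it is quoted from the reference \cite{ciocan-fontanine07:virtual}---so there is no internal proof to match against. The standard proof in that source is the more algebraic one: decompose $E=\bigoplus_{\chi\neq 0}F_{\chi}\otimes\chi$ (trivial weight absent because $E^{T}=X$), reduce to line bundles by the splitting principle, and write each factor as $(1-e^{-\chi})+e^{-\chi}(1-[L])$, a unit of $Frac(Rep(T))$ plus a nilpotent. Your route instead plays the self-intersection formula for the zero section $j$ of $E^{\vee}$ against Thomason localization (\cref{thm:1.4}): your weight argument showing $(E^{\vee})^{T}=X$ is the one genuinely necessary observation and it is right, after which $j_{*}$ and $j^{*}$ are both isomorphisms after localization, so $j^{*}j_{*}=[\wedge^{\bullet}E]\otimes(-)$ is an automorphism. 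This is precisely the mechanism the paper itself uses one lemma later, in the proof of \cref{lem:1.8}, so your argument arguably fits the paper's toolkit better than the cited one; your Koszul computation of $j^{*}j_{*}\mathcal{O}_{X}$ is a re-derivation of \cref{lem:1.6} and could simply be replaced by a citation of it. One small caveat: what you literally produce is an element $u\in K^{T}(X)_{loc}$ with $[\wedge^{\bullet}E]\cdot u=[\mathcal{O}_{X}]$, i.e.\ invertibility of the multiplication operator; when $X$ is singular, $K^{T}(X)_{loc}$ is only a module over the ring of vector-bundle classes rather than a ring, but this operator-level invertibility is exactly what the paper needs in \cref{lem:1.8} and \cref{eq:2.5}.
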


The following proposition reveals the relation between pull back and push
forward maps of Grothendieck groups.
\begin{lemma}[Proposition 5.4.10 of \cite{chriss2009representation}]
  \label{lem:1.6}
  Let $i:N\xhookrightarrow{}M$ be a $G$-equivariant closed regular
  embedding. Then the conormal sheaf $T_{N}^{*}M$ is locally free and
  the composite map
  $K^{G}(N)\xrightarrow{i_{*}}K^{G}(M)\xrightarrow{i^{*}}K^{G}(N)$ is
  given by the formula $i^{*}i_{*}\mathcal{F}=[\wedge^{\bullet}T_{N}^{*}M]\otimes \mathcal{F}$, for any $\mathcal{F}\in K^{G}(N)$.
\end{lemma}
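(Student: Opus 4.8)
The plan is to reduce the statement to a local Koszul-complex computation. Since $i$ is a closed regular embedding, say of codimension $r$, its ideal sheaf $\mathcal{I}\subset\mathcal{O}_M$ is Zariski-locally generated by a regular sequence $f_1,\dots,f_r$, and standard commutative algebra then gives that $T_N^*M=\mathcal{I}/\mathcal{I}^2$ is locally free of rank $r$ on $N$; this settles the first assertion. For the formula, I would note that $i_*$ is exact on coherent sheaves (closed immersion), so unwinding the definition of the refined Gysin map applied to $f=i$ (the square obtained by base-changing $i$ along $\mathrm{id}_M$) gives
\[
i^*i_*\mathcal{F}=\sum_{j\ge 0}(-1)^j\bigl[Tor_j^{\mathcal{O}_M}(\mathcal{O}_N,i_*\mathcal{F})\bigr]\in K^G(N),
\]
so the whole problem is to identify these $Tor$ sheaves.

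Next I would carry out the local identification. On an open set where $\mathcal{I}=(f_1,\dots,f_r)$ with the $f_\ell$ a regular sequence, the Koszul complex $K_\bullet=K_\bullet(f_1,\dots,f_r)$ is a finite free (hence flat) resolution of $\mathcal{O}_N$ over $\mathcal{O}_M$, with $K_j=\wedge^j(\mathcal{O}_M^{\,r})$ and differential obtained by contracting against $(f_1,\dots,f_r)$. Tensoring over $\mathcal{O}_M$ with $i_*\mathcal{F}$: because each $f_\ell\in\mathcal{I}$ acts as $0$ on the $\mathcal{O}_N$-module $i_*\mathcal{F}$, every differential of $K_\bullet\otimes_{\mathcal{O}_M}i_*\mathcal{F}$ vanishes, so $Tor_j^{\mathcal{O}_M}(\mathcal{O}_N,i_*\mathcal{F})=K_j\otimes_{\mathcal{O}_M}i_*\mathcal{F}=\wedge^j(\mathcal{O}_N^{\,r})\otimes_{\mathcal{O}_N}\mathcal{F}$. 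The crucial point is that this is canonical: the surjection $\mathcal{O}_N^{\,r}\twoheadrightarrow\mathcal{I}/\mathcal{I}^2$ sending $e_\ell$ to $\overline{f_\ell}$ is an isomorphism (regular sequence), independent of the chosen generators, so the local isomorphisms $Tor_j^{\mathcal{O}_M}(\mathcal{O}_N,i_*\mathcal{F})\cong\wedge^j(T_N^*M)\otimes_{\mathcal{O}_N}\mathcal{F}$ glue to a global one. Being canonical it is automatically $G$-equivariant, even though the local $f_\ell$ need not be, hence an identity in $K^G(N)$. Summing over $j$ with signs then yields $i^*i_*\mathcal{F}=[\wedge^\bullet(T_N^*M)]\otimes\mathcal{F}$.

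The main obstacle is precisely the bookkeeping in the second step: one must check that the Koszul resolution genuinely computes $Tor$ here, that the restricted Koszul differentials indeed all vanish, and — most delicately — that the canonical $\mathcal{O}_N$-module identification of the $Tor$ sheaves with exterior powers of the conormal bundle is compatible with the $G$-action, since the $G$-equivariant structure on $K(N)$ is what the statement is about. Everything else — exactness of $i_*$, the definition of $i^*$, and the local freeness of $\mathcal{I}/\mathcal{I}^2$ — is formal once the regular-embedding hypothesis is invoked. An alternative route would be deformation to the normal cone as in \cite{chriss2009representation}, reducing to the case where $M$ is the total space of the normal bundle and $i$ its zero section, but the direct Koszul computation above is the shortest path.
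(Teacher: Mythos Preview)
The paper does not prove this lemma at all: it merely cites Proposition~5.4.10 of \cite{chriss2009representation} and moves on. Your proposal supplies the standard Koszul-complex argument, which is precisely the approach in the cited reference, so in that sense you are aligned with what the paper relies on.

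Your argument is correct. A couple of minor remarks on presentation. First, in unwinding $i^*i_*\mathcal{F}$ you invoked the refined Gysin map for the square with $f=i$ base-changed along $\mathrm{id}_M$; it is cleaner simply to say that $i^*$ on $K$-theory is the derived pullback, so $i^*i_*\mathcal{F}=[\mathcal{O}_N\otimes^L_{\mathcal{O}_M}i_*\mathcal{F}]=\sum_j(-1)^j[Tor_j^{\mathcal{O}_M}(\mathcal{O}_N,i_*\mathcal{F})]$ directly. Second, the delicate point you flag about $G$-equivariance can be made less hand-wavy by observing that there is a \emph{canonical} (hence $G$-equivariant) map $\wedge^j(\mathcal{I}/\mathcal{I}^2)\to Tor_j^{\mathcal{O}_M}(\mathcal{O}_N,\mathcal{O}_N)$ arising from the connecting map $\mathcal{I}/\mathcal{I}^2\xrightarrow{\sim}Tor_1^{\mathcal{O}_M}(\mathcal{O}_N,\mathcal{O}_N)$ together with the multiplicative structure on $Tor$; your local Koszul computation then shows this global canonical map is an isomorphism. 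This removes any worry about gluing non-equivariant local resolutions. The alternative route via deformation to the normal cone that you mention is indeed how Chriss--Ginzburg organize parts of their treatment, but for this particular identity the direct Koszul calculation is both shorter and what they actually do.
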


One application of Lemma \ref{lem:1.6} is that

\begin{lemma}
  \label{lem:1.8}
  Let $X$ be a quasi-projective scheme with trivial $T$ action. Let
  $W$ and $V$ be two $T$-equivariant vector bundles satisfying
  $W^{T}=X$ and $V^{T}=X$, and $f:W\to V$ a $T$-equivariant linear morphism.
  Let $Y=f^{-1}(i_{V}(X))$, with the following Cartesian diagram:
  \begin{equation*}
    \begin{tikzcd}
    Y \ar{r}{i'} \ar{d}{f'}& W \ar{d}{f} \\
    X            \ar{r}{i_{V}} & V
    \end{tikzcd}
  \end{equation*}
  where $i_{V}$ is the zero section in $V$. Then $Y^{T}=X$. Let 
  $X\xhookrightarrow{j}Y$ be the embedding of fixed points. For any
  $\mathcal{F}\in K^{T}(X)_{loc}$,
  \begin{equation}
    \label{eq:1.4}
    f^{!}\mathcal{F}=j_{*}(\frac{[\wedge^{\bullet}V^{∨}]}{[\wedge^{\bullet}W^{∨}]}\mathcal{F})
  \end{equation}
  as elements in in $K^{T}(Y)_{loc}$.
  
\end{lemma}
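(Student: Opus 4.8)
The plan is to compute $f^{!}\mathcal{F}$ on the fixed locus by combining the projection-formula/self-intersection identity of \cref{lem:1.6} with the Thomason localization theorem. First I would observe that since $X$ carries the trivial $T$-action and $Y = f^{-1}(i_V(X))$ is a $T$-equivariant closed subscheme of $W$ with $W^{T} = X$, the fixed locus $Y^{T}$ sits inside $W^{T} = X$; conversely $i_V(X)$ is $T$-fixed and $f$ is equivariant, so $X \cong i_V(X) \cap W^{T} \subseteq Y^{T}$, giving $Y^{T} = X$. Thus the embedding $j: X \hookrightarrow Y$ of fixed points is exactly the base change of $i_V: X \hookrightarrow V$ along $f$, i.e. $j = i'$ in the Cartesian square, and by Thomason localization (\cref{thm:1.4}) $j_{*}: K^{T}(X)_{loc} \to K^{T}(Y)_{loc}$ is an isomorphism. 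So it suffices to prove the identity after applying $j^{*}$, or equivalently to identify $f^{!}\mathcal{F}$ as $j_{*}$ of a class which we then pin down.

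The key computation is the self-intersection formula. Since $i_V: X \to V$ is the zero section of a vector bundle, it is a regular embedding whose conormal sheaf is $V^{\vee}$, so by \cref{lem:1.6}, $i_V^{*} i_{V*} = [\wedge^{\bullet} V^{\vee}] \otimes (-)$ on $K^{T}(X)$, and $[\wedge^{\bullet} V^{\vee}]$ is invertible in $K^{T}(X)_{loc}$ by \cref{lem:1.6} together with the invertibility lemma (since $V^{T} = X$). By base change along the Cartesian square (\cref{lem:1.1} for the proper pushforward $i_{V*}$, or directly the definition of the refined Gysin map), $f^{!} i_{V*} = j_{*} f'^{!}$ where $f': Y \to X$ is the structure map; but $f^{!}$ applied to $\mathcal{F}$ means applying $f^{!}$ to $i_{V*}\mathcal{F}$ after the identification $K^{T}(X)_{loc} \xrightarrow{i_{V*}} K^{T}(V)_{loc}$, so $f^{!}\mathcal{F} = j_{*} f'^{!} (i_V^{*} i_{V*})^{-1} i_{V*}\mathcal{F}$. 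Here $f'^{!}$ is the refined Gysin map of the l.c.i.\ morphism $f$ pulled back to $X$; since on the fixed locus $f'$ is the identity $X \to X$ but with a "virtual normal bundle" accounting, the composite $f'^{!} \circ (\text{mult.\ by } [\wedge^{\bullet} V^{\vee}])^{-1}$ should become multiplication by $[\wedge^{\bullet} V^{\vee}]^{-1}$, while the excess coming from $f$ contributes $[\wedge^{\bullet} W^{\vee}]$. Concretely I would factor through the diagram: the Gysin map $i'^{!}$ for the zero section $i_V$ pulled back to $W$ satisfies $i'^{*} i'_{*} = [\wedge^{\bullet} W^{\vee}] \otimes (-)$ by \cref{lem:1.6} applied to $i': Y \hookrightarrow W$ — wait, rather I would use that $i_W: X \to W$ is the composite of $j: X \to Y$ and $Y \hookrightarrow W$, and the zero section $i_W$ has conormal $W^{\vee}$, so $i_W^{*} i_{W*} = [\wedge^{\bullet} W^{\vee}]$.

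Assembling: write $f^{!}\mathcal{F} = j_{*}(\alpha \cdot \mathcal{F})$ for a unique $\alpha \in K^{T}(X)_{loc}$ by Thomason. Apply $i_W^{*} = i_W^{*}$ on both sides after pushing to $W$: the left side, since $i_W = (Y \hookrightarrow W) \circ j$, gives $\iota^{*} \iota_{*}(f^{!}\mathcal{F})$ where $\iota: Y \hookrightarrow W$; but $f^{!}$ is (up to the identification) $\iota^{*}$ composed with $f$-pullback, and $\iota^{*} \iota_{*} = [\wedge^{\bullet} N^{\vee}_{Y/W}]$. Tracking both conormal contributions $W^{\vee}$ (from $X \hookrightarrow W$) and $V^{\vee}$ (from $X \hookrightarrow V$, which gets inverted because it appears on the source side of $f^{!}$), one arrives at $\alpha = [\wedge^{\bullet} V^{\vee}] / [\wedge^{\bullet} W^{\vee}]$. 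The main obstacle I anticipate is bookkeeping the two zero-section self-intersections correctly — in particular making sure the $V^{\vee}$ factor appears in the numerator (it comes from \emph{inverting} $i_V^{*} i_{V*}$, since the refined Gysin map $f^{!}$ is defined relative to the base $V$ and the class $\mathcal{F} \in K^{T}(X)_{loc}$ is implicitly transported via the isomorphism $i_{V*}$ of Thomason localization) while the $W^{\vee}$ factor appears in the denominator (it is the self-intersection of the zero section $i_W$, or equivalently the excess normal bundle of $f$ restricted to the fixed locus). A clean way to avoid sign/placement errors is to verify the formula first in the trivial case $W = V$, $f = \mathrm{id}$ (where $Y = X$, $j = \mathrm{id}$, and both sides are $\mathcal{F}$), and then in the case $f = $ zero map $W \to V$ (where $Y = W$, $f^{!} = i_V^{!}$ pulled back, and the formula reduces to $i_V^{!}\mathcal{F} = i_{V*}^{-1}$-transported $= j_{*}([\wedge^{\bullet} V^{\vee}] \mathcal{F})$ on $W$, matched against $[\wedge^{\bullet}V^{\vee}]/[\wedge^{\bullet}W^{\vee}]$... ) — any genuinely new content beyond \cref{lem:1.6} and \cref{thm:1.4} is just this compatibility, which follows from functoriality of refined Gysin maps (\cref{lem:k-ass}, \cref{lem:kcom}) applied to the evident commutative diagram relating $i_V$, $i_W$, $f$, and the zero sections.
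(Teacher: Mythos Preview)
Your overall strategy---reduce via Thomason localization to an identity on $X$, and detect both sides by pushing into $W$ and applying $i_W^*$, invoking \cref{lem:1.6} for the self-intersections---is exactly the paper's approach. The paper makes the key step precise as follows: since $i_W^*\circ i'_*\circ j_*=i_W^*\circ i_{W*}$ and this composite is an isomorphism on $K^T(\,\cdot\,)_{loc}$, it suffices to check $i_W^*i'_*\big(f^{!}\mathcal{F}\big)=[\wedge^\bullet V^\vee]\mathcal{F}$; the left side is $\sum_{i,j}(-1)^{i+j}\big[\mathrm{Tor}_j^{\mathcal{O}_W}\!\big(\mathrm{Tor}_i^{\mathcal{O}_V}(\mathcal{F},\mathcal{O}_W),\mathcal{O}_X\big)\big]$, which collapses by the Tor spectral sequence \cref{eq:tor} to $\sum_i(-1)^i\big[\mathrm{Tor}_i^{\mathcal{O}_V}(\mathcal{F},\mathcal{O}_X)\big]=[\wedge^\bullet V^\vee]\mathcal{F}$. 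Equivalently, this is \cref{lem:k-ass} applied to the factorization $i_V=f\circ i_W$, exactly as you suggest at the end.

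Where your write-up goes astray is the middle detour through ``$f^{!}i_{V*}=j_*f'^{!}$'' and the attempt to interpret $f'^{!}$ as a Gysin map along $f':Y\to X$. That formula is not what \cref{lem:1.1} gives you here: in the Cartesian square defining $f^{!}$, the map $i_V$ already plays the role of the horizontal arrow, so there is no further base-change to perform, and $f'$ is generally neither l.c.i.\ nor the identity. Drop that paragraph entirely; the argument you sketch in the ``Assembling'' paragraph, completed by the Tor spectral sequence (or \cref{lem:k-ass} for $i_V=f\circ i_W$), is the whole proof.
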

\begin{proof}
  The fact that $Y^{T}=X$ is trivial.
  By the Thomason localization theorem
  \ref{thm:1.4}, the following morphisms are isomorphisms:
  $$K^{T}(X)_{loc}\xrightarrow{j_{*}}K^{T}(Y)_{loc}\xrightarrow{i'_{*}} K^{T}(W)_{loc}.$$
  Let $i_{W}$ be the zero section of $W$, then 
  $$i_{W}^{*}:K^{T}(W)_{loc}\to K^{T}(X)_{loc}$$
  is also an isomorphism, by  \cref{lem:1.6} and \cref{thm:1.4}. Thus
  equality \cref{eq:1.4} is equivalent to the following equality:
  $$i_{W}^{*}i_{*}'f^{!}\mathcal{F}=i_{W}^{*}i_{*}'j_{*}(\frac{[\wedge^{\bullet}V^{∨}]}{[\wedge^{\bullet}W^{∨}]}\mathcal{F}),$$
  i.e.
  \begin{equation}
    \label{eq:1.5}
    i_{W}^{*}(\sum_{i=0}^{\infty }(-1)^{i}[Tor_{i}^{V}(\mathcal{F},\mathcal{O}_{W})])=i_{W}^{*}i_{W_{*}}(\frac{[\wedge^{\bullet}V^{∨}]}{[\wedge^{\bullet}W^{∨}]}\mathcal{F}).  
  \end{equation}
  The right hand side of \cref{eq:1.5} is $[\wedge^{\bullet}V^{∨}]\mathcal{F}$ by \cref{lem:1.6}. The left hand side is
  $$\sum_{j=0}^{\infty }\sum_{i=0}^{\infty }(-1)^{i+j}[Tor_{j}^{W}(Tor_{i}^{V}(\mathcal{F},\mathcal{O}_{W}),\mathcal{O}_{X})]=\sum_{i=0}^{\infty }(-1)^{i}[Tor_{i}^{V}(\mathcal{F},\mathcal{O}_{X})]$$
  by equation \eqref{eq:tor}, which is also $[\wedge^{\bullet}V^{∨}]\mathcal{F}$ by \cref{lem:1.6}.
\end{proof}
\subsection{Induction}

Let $P\subset G$ be a closed algebraic subgroup which acts on $X$. The
the induced space, $G\times _{P}X$ is defined to be the space of orbits of
$P$ acting freely on $G\times X$ by $h:(g,x)\to
(gh^{-1},hx)$. Formula (5.2.17) of \cite{chriss2009representation} constructs two isomorphisms $res$ and $ind_{P}^{G}$ which are the inverse of each other:
\begin{equation}
  \label{eq:ind}
  K^{P}(X)\overset{res}{\underset{ind_{P}^{G}}\rightleftarrows}
K^{G}(X\times_{G}P).
\end{equation}

Now let $P$ be a parabolic subgroup, $T\subset P$ be a maximal torus,
and $H$ be the Levi subgroup of $P$.  Let $W$ be the Weyl group of
$G$, and $W_{H}$ be the Weyl group of $H$. Let $\mathfrak{g}$ and
$\mathfrak{p}$ be the Lie algebras of $G$ and $P$ and $T$ acts on
$\mathfrak{g}$ and $\mathfrak{p}$ by adjoint representations.

\begin{lemma}
  \label{lem:1.9}
  Let $Y=X\times _{P}G$, then:
  \begin{enumerate}
  \item\label{1.9.1} $Y^{T}=X^{T}\times _{W_{H}}W$.
  \item Let $s:Y^{T}\to X^{T}$ to be a projection associated to a
    choice of representatives for $W_{H}$ in $W$, $j:X\to Y$ to be the natural
    inclusion $x\to (x,e)$, where $e$ is the unit of $G$. If $X^{T}$
    is connected, we have the following commutative diagram:
    \begin{equation}
    \label{eq:1.7}
    \begin{tikzcd}
       K^{T}(Y^{T}) \ar{r}{i_{Y*}} & K^{T}(Y) \ar{d}{j^{*}} \\
      K^{T}(X^{T}) \ar{r}{\tilde{i}} \ar{u}{s^{*}}& K^{T}(X)
    \end{tikzcd}
  \end{equation}
  where
  $\tilde{i}(\mathcal{F})=([\wedge^{\bullet}(\mathfrak{g}/\mathfrak{p})^{*}])\mathcal{F}$.
  \item\label{1.9.3} We have the commutative diagram:
  \begin{equation}
    \label{eq:res}
    \begin{tikzcd}
      K^{G}(Y)\ar{r} \ar{d}{res} & K^{T}(Y) \ar{d}{j^{*}} \\
      K^{P}(X)\ar{r} & K^{T}(X)
    \end{tikzcd}
  \end{equation}
  \end{enumerate}
\end{lemma}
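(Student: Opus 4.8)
The plan is to prove the three parts in order, since each relies on the previous. For part~\eqref{1.9.1}, the key observation is that the $T$-fixed points of $Y = X \times_P G$ can be computed componentwise: a point of $Y$ is an orbit $[(x,g)]$, and $T$ fixes it iff for every $t \in T$ there is $p(t) \in P$ with $(tx\cdot p(t)^{-1}, p(t)g) = (x,g)$, i.e.\ $g^{-1}tg \in P$ and $x$ is fixed by the resulting element. The set of cosets $gP$ with $g^{-1}Tg \subset P$ is exactly the $T$-fixed locus of $G/P$, which by Borel's theorem is $W/W_H$ (the $T$-fixed points of the flag-type variety $G/P$ are indexed by $W_H$-cosets in $W$, represented by permutation matrices $\dot w$). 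Over the component indexed by $\dot w$, the condition on $x$ is that $x$ is fixed by $\dot w^{-1} T \dot w = T$, i.e.\ $x \in X^T$; assembling these gives $Y^T = X^T \times_{W_H} W$. I would write this out via the quotient description, being careful that the $W_H$-action on $X^T$ is the one induced by the $T$-action (which is trivial) twisted appropriately, so that the fiber bundle $X^T \times_{W_H} W \to W/W_H$ is what appears.

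For part~(2), the strategy is to compare $j^* i_{Y*}$ with $\tilde i\, s^*$ by reducing to a statement about the normal geometry of $X \hookrightarrow Y$ near the fixed locus. The inclusion $j : X \to Y$, $x \mapsto [(x,e)]$, lands in $Y$, and its image meets $Y^T$ precisely in the component indexed by the identity coset; the map $s : Y^T \to X^T$ restricted to that component is the identity. The pullback $j^* i_{Y*}$ is governed by excess-intersection: $X = X^T \times_{\{e\}} G / \! / \cdots$ — more concretely, $j$ factors through a regular embedding whose normal bundle is the pullback of $T_X Y$, and $T_X Y \cong \mathfrak g / \mathfrak p$ as a $T$-equivariant bundle (trivial bundle with fiber the $T$-representation $\mathfrak g/\mathfrak p$, since $Y \to G/P$ is a fibration with fiber $X$ and the normal direction to the $X$-fiber at $e$ is $T_{eP}(G/P) = \mathfrak g/\mathfrak p$). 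Then the clean/excess intersection formula in equivariant $K$-theory — this is exactly the content of \cref{lem:1.6} applied to the regular embedding together with base change \cref{lem:1.1} — yields $j^* i_{Y*} = ([\wedge^\bullet (\mathfrak g/\mathfrak p)^*]) \cdot s^*$ on the identity component, and $0$ on the others after pulling back by $j^*$ (those components don't meet $j(X)$). The hypothesis that $X^T$ is connected is what lets us treat $[\wedge^\bullet(\mathfrak g/\mathfrak p)^*]$ as a single well-defined scalar class rather than a locally constant one, and it ensures the decomposition of $Y^T$ into $|W/W_H|$ copies of $X^T$ is clean.

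Part~\eqref{1.9.3} is the most formal: it says the restriction isomorphism $res : K^G(Y) \xrightarrow{\sim} K^P(X)$ of \eqref{eq:ind} is compatible with further restriction to $T$. I would prove this by unwinding the definition of $res$ from formula (5.2.17) of \cite{chriss2009representation}: $res$ is itself built from the pullback along $j : X \to Y$ (which identifies $X$ with the fiber over $eP$) together with the identification of $P$-equivariant sheaves on $X$ with $G$-equivariant sheaves on $Y$. Concretely, $res$ is $j^*$ followed by the canonical equivalence; restricting the $G$-action to $T$ on the top row and the $P$-action to $T$ on the bottom row, both "restrict structure group" maps commute with $j^*$ because $j$ is $T$-equivariant and the equivalences in \eqref{eq:ind} are defined by descent, which is insensitive to shrinking the group. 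So the square \eqref{eq:res} commutes essentially by functoriality.

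The main obstacle I anticipate is part~(2): specifically, verifying that the normal bundle of $X$ in $Y$ is the \emph{trivial} $T$-equivariant bundle with fiber $\mathfrak g/\mathfrak p$ (and not some twist), and then correctly bookkeeping the excess-intersection contributions over the non-identity components of $Y^T$ so that after applying $j^*$ only the identity component survives with the stated Euler-class factor. The connectedness of $X^T$ is doing real work here and I would flag exactly where. Parts~\eqref{1.9.1} and~\eqref{1.9.3} I expect to be routine given the $[Quot^\circ/GL]$-type quotient presentations and the standard properties of induction.
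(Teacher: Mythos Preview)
Your proposal is correct and follows essentially the same route as the paper: for part~(2) both arguments decompose $Y^T$ into components indexed by $W/W_H$, observe that only the identity component meets $j(X)$ so the others die under $j^*$, identify the normal bundle $T_X Y \cong \mathfrak{g}/\mathfrak{p}$ via the fibration $Y \to G/P$ with fiber $X$, and apply the self-intersection formula \cref{lem:1.6}; part~\eqref{1.9.3} is treated as formal in both. The only difference is that you sketch a direct argument for part~\eqref{1.9.1} where the paper simply cites Proposition~A.17 of \cite{minets18:cohom_hall_higgs}.
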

\begin{proof}
  \eqref{1.9.1} was proven in Proposition A.17 of
  \cite{minets18:cohom_hall_higgs} and \eqref{1.9.3} is obvious from
  definition.
  Let $j^{T}$ be the embedding of $X^{T}$ in $Y^{T}$. We have the
  Cartesian square:
   \begin{equation*}
    \begin{tikzcd}
      X^{T} \ar{r}{i_{X}}\ar{d}{j_{T}} & X \ar{d}{j} \\
      Y^{T} \ar{r}{i_{Y}}       & Y.         
    \end{tikzcd}
  \end{equation*}
  and thus $i_{Y*}\circ j_{T*}=j_{*}\circ i_{X*}:K(X^{T})\to K(Y)$. Moreover, we
  have
  $$Y^{T}=\bigcup_{i=1}^{k}Y_{i},$$
  where all $Y_{i}$ are different connected components of $Y^{T}$ and
  $Y_{1}=X^{T}$, the only component which has non-empty intersection
  with $X$. Thus $s^{*}(\mathcal{F})=j_{T*}\mathcal{F}+\mathcal{F}'$, where $\mathcal{F}'$ is supported in
  $\bigcup_{i=2}^{k}Y_{i}$. So $j^{*}\circ i_{Y*}(\mathcal{F}')=0$ and we
  have
  \begin{align*}
    j^{*}\circ i_{Y*}\circ s^{*}(\mathcal{F}) & =j^{*}\circ i_{Y*}\circ j_{T*}(\mathcal{F}) \\
                                              & = j^{*}\circ j_{*} \circ i_{X*}(\mathcal{F}) \\
    & =[\wedge^{\bullet}T_{X}Y^{*}]i_{X*}(\mathcal{F})
  \end{align*}
 by Lemma \ref{lem:1.6}. Now we prove $T_{X}Y=\mathfrak{g}/\mathfrak{p}$. Notice that $Y$ is a fiber bundle over $G/P$ with fiber $X$:
  \begin{equation*}
    \begin{tikzcd}
      X\ar{r}{j} \ar{d}{\pi} & Y \ar{d} \\
      Spec(k) \ar{r}{e} & G/P,
    \end{tikzcd}
  \end{equation*}
  where the bottom line corresponds to the unit element. Hence $T_{X}Y=\pi^{*}(\mathfrak{g}/\mathfrak{p})=\mathfrak{g}/\mathfrak{p}$.
 \end{proof}
\subsection{K-theory of Artin stacks}

The concept of Grothendieck group can also be extended to the case of
Artin stacks. \cite{toen99:k} is a great reference for the discussion. A special case of Artin stacks is group
quotient, i.e. $Y=[X/H]$, where $X$ is a scheme and $H$ is an
algebraic group with group action on $X$, then we have:

\begin{lemma}[Lemma 2.11 of \cite{toen99:k}]
  $$K(X/H)=K^{H}(X).$$
\end{lemma}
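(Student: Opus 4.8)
The plan is to reduce the identity to an exact equivalence of abelian categories and then pass to Grothendieck groups. Write $\pi\colon X\to [X/H]$ for the canonical smooth atlas: it is an $H$-torsor, in particular faithfully flat and surjective, and the fiber product $X\times_{[X/H]}X$ is canonically identified with $H\times X$, the two projections corresponding respectively to the projection $\mathrm{pr}_2\colon H\times X\to X$ and the action map $a\colon H\times X\to X$.

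First I would use fppf descent to describe $\mathrm{Coh}([X/H])$ explicitly. By the definition of a (coherent) sheaf on a stack via a smooth atlas, an object of $\mathrm{Coh}([X/H])$ is a coherent sheaf $\mathcal F$ on $X$ together with a descent datum, i.e. an isomorphism $\sigma\colon a^*\mathcal F\xrightarrow{\ \sim\ }\mathrm{pr}_2^*\mathcal F$ on $H\times X$ satisfying the cocycle condition on $H\times H\times X$. Unwinding the identification $X\times_{[X/H]}X\cong H\times X$, the cocycle condition is precisely the compatibility of $\sigma$ with the associativity of the $H$-action, together with the normalization along the identity of $H$; these are exactly the axioms defining an $H$-equivariant structure on $\mathcal F$. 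Hence $\pi^{*}$ induces an equivalence of abelian categories
$$\mathrm{Coh}([X/H])\ \xrightarrow{\ \sim\ }\ \mathrm{Coh}^{H}(X),\qquad \mathcal G\mapsto \pi^{*}\mathcal G.$$

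Second, this equivalence is exact in both directions: $\pi^{*}$ is exact because $\pi$ is flat, and its quasi-inverse (the descent functor) is exact because forming kernels and cokernels commutes with pullback along the faithfully flat $\pi$ and with the descent isomorphisms. An exact equivalence of abelian categories carries generators to generators and short exact sequences to short exact sequences in both directions, so it induces an isomorphism of Grothendieck groups $[\mathcal G]\mapsto[\pi^{*}\mathcal G]$. This yields $K([X/H])=K^{H}(X)$.

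The only nontrivial input is the descent step itself — effectivity of fppf descent for coherent sheaves along $\pi$, together with the identification $X\times_{[X/H]}X\cong H\times X$ that matches the cocycle condition with the $H$-action axioms. This is the content of Lemma 2.11 of \cite{toen99:k}, so in the write-up I would cite it rather than reprove faithfully flat descent; the remaining steps, exactness and the transport of generators and relations, are formal.
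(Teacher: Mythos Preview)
Your argument is correct and is the standard one: the equivalence $\mathrm{Coh}([X/H])\simeq\mathrm{Coh}^{H}(X)$ follows from fppf descent along the atlas $\pi\colon X\to[X/H]$ once one identifies descent data with an $H$-equivariant structure, and an exact equivalence of abelian categories induces an isomorphism on Grothendieck groups.

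There is nothing to compare on the paper's side, however: the paper does not prove this lemma at all but simply quotes it from \cite{toen99:k}. Your write-up therefore supplies strictly more than the paper does. If you want to match the paper's treatment, a one-line citation suffices; if you prefer to keep your sketch, it is a welcome elaboration, and your own final paragraph already flags that the substantive input (effectivity of descent) is precisely what is being cited.
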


\section{Quot Schemes and Flag Schemes of a Surface}\label{sec:quot}

In this paper, we will work over an algebraically closed field $k$ of
characteristic 0. Let $S$ be a smooth projective surface and
$\mathcal{O}(1)$ an ample line bundle over $S$.

\subsection{Quot Schemes and Flag Schemes}
\begin{definition}
  Given $d$ a non-negative integer, Grothendieck's Quot scheme
  $Quot_{d}$ is defined to be the moduli scheme of quotients of coherent sheaves
$$\{\phi_{d}:k^{d}\otimes \mathcal{O}_{S}\twoheadrightarrow
\mathcal{E}_{d}\},$$ where $\mathcal{E}_{d}$ has dimension $0$ and
$h^{0}(\mathcal{E}_{d})=d$.

There is an open subscheme  $Quot_{d}^{\circ }\subset Quot_{d}$ which
consists of quotients such that $H^{0}(\phi):k^{d}\to
H^{0}(\mathcal{E}_{d})$ is an isomorphism.

Over $Quot_{d}$ and $Quot_{d}^{\circ}$, there is a universal quotients
coherent sheaves $$\phi_{d}:k^{d}\otimes \mathcal{O}_{S} \twoheadrightarrow
\mathcal{E}_{d}.$$
Its kernel, denoted by $\mathcal{I}_{d}$. is
defined as a \textbf{kernel
sheaf}.
\end{definition}

\begin{example}
  $Quot_{1}^{\circ }=S$. Let $\Delta:S\to S\times S$ be the diagonal map, then
  $\mathcal{E}_{1}\in Coh(Quot_{1}^{\circ }\times S)$ is
  $\mathcal{O}_{\Delta}$, and $\mathcal{I}_{1}$ is
  $\mathcal{I}_{\Delta}$, the ideal sheaf of diagonal. 
\end{example}

\begin{remark}
  There is a principle of abusing notations in this paper. For any
  scheme $X$ and $f:X\to Quot_{n}^{\circ }$, let
  \begin{center}
    \begin{tabular}{ccc}
      $f^{*}\mathcal{E}_{d}\in Coh(X\times S)$ &  denoted by & $\mathcal{E}_{d}$ \\
      $f^{*}\mathcal{I}_{d}\in Coh(X\times S)$ & denoted by & $\mathcal{I}_{d}$
    \end{tabular}
  \end{center}
\end{remark}

\begin{lemma}
  \label{lem:res}
  There exists a short exact sequence
  \begin{equation}
    \label{eq:2.1}
     0\to \mathcal{V}_{d}\xrightarrow{v_{d}}\mathcal{W}_{d}\xrightarrow{w_{d}}\mathcal{I}_{d}\to 0,
  \end{equation}
  over $Quot_{d}^{\circ }\times S$, where $\mathcal{V}_{d}$ and $\mathcal{W}_{d}$ are locally free
sheaves.
\end{lemma}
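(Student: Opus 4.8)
The plan is to produce a two-term locally free resolution of $\mathcal{I}_{d}$ on $Quot_{d}^{\circ}\times S$: then $\mathcal{V}_{d}$ and $\mathcal{W}_{d}$ are the two terms, with $w_{d}$ the augmentation and $v_{d}$ the differential. Write $p_{1}\colon Quot_{d}^{\circ}\times S\to Quot_{d}^{\circ}$ and $p_{2}\colon Quot_{d}^{\circ}\times S\to S$ for the two projections.

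First I would construct the surjection $w_{d}$ by relative Serre vanishing. Since $Quot_{d}$ is projective over $k$, $Quot_{d}^{\circ}$ is quasi-projective and $p_{1}$ is projective with $p_{2}^{*}\mathcal{O}(1)$ relatively ample; moreover $\mathcal{I}_{d}$ is flat over $Quot_{d}^{\circ}$, being the kernel of a map of $Quot_{d}^{\circ}$-flat sheaves $\mathcal{O}^{d}\to\mathcal{E}_{d}$ (the target is flat by the universal property of the Quot scheme). Hence for $n\gg 0$ one has $R^{>0}p_{1*}(\mathcal{I}_{d}\otimes p_{2}^{*}\mathcal{O}(n))=0$, the sheaf $\mathcal{G}:=p_{1*}(\mathcal{I}_{d}\otimes p_{2}^{*}\mathcal{O}(n))$ is locally free (cohomology and base change), and $p_{1}^{*}\mathcal{G}\otimes p_{2}^{*}\mathcal{O}(-n)\twoheadrightarrow\mathcal{I}_{d}$ is surjective. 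Take $\mathcal{W}_{d}=p_{1}^{*}\mathcal{G}\otimes p_{2}^{*}\mathcal{O}(-n)$, $w_{d}$ this surjection, and $\mathcal{V}_{d}=\ker w_{d}$. It then suffices to show $\mathcal{I}_{d}$ has projective dimension $\le 1$ at every point of $Quot_{d}^{\circ}\times S$: by Schanuel's lemma all stalks of $\mathcal{V}_{d}$ are then free, so $\mathcal{V}_{d}$ is locally free on the Noetherian scheme $Quot_{d}^{\circ}\times S$.

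This projective-dimension bound is the step I expect to be the main obstacle: $Quot_{d}^{\circ}$ is not smooth (the stack $Coh_{d}=[Quot_{d}^{\circ}/GL_{d}]$ is obstructed, carrying $\mathrm{Ext}^{2}$'s), so $Quot_{d}^{\circ}\times S$ need not have finite global dimension and a naive dimension count is unavailable. Instead I would exploit the $Quot_{d}^{\circ}$-flatness of $\mathcal{I}_{d}$ together with regularity of the fibres of $p_{1}$. Fix $x\in Quot_{d}^{\circ}\times S$ over $q\in Quot_{d}^{\circ}$; the fibre $p_{1}^{-1}(q)=S\times_{k}\mathrm{Spec}\,k(q)$ is smooth over $k(q)$, hence regular of dimension $\le 2$. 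On it, $\mathcal{E}_{d}|_{q}$ is $0$-dimensional, so has projective dimension $\le 2$ at $x$ (Auslander–Buchsbaum), and $\mathcal{I}_{d}|_{q}=\ker(\mathcal{O}^{d}\to\mathcal{E}_{d}|_{q})$ (restriction stays exact by flatness) is its first syzygy, hence of projective dimension $\le 1$ at $x$. Now work in the local rings $A=\mathcal{O}_{Quot_{d}^{\circ},q}\to R=\mathcal{O}_{Quot_{d}^{\circ}\times S,x}$, where $R$ is flat over $A$ since $S$ is flat over $k$: lift a length-$1$ free resolution of $(\mathcal{I}_{d})_{x}\otimes_{A}A/\mathfrak{m}_{A}$ over $R/\mathfrak{m}_{A}R$ to $R$, lifting each surjection by Nakayama; the kernel at each stage is again $A$-flat (kernel of a surjection of $A$-flat modules) and reduces modulo $\mathfrak{m}_{A}$ to the next fibrewise syzygy, and at the last stage that syzygy is free, so the corresponding kernel has vanishing fibre and hence vanishes by Nakayama. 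This yields $\mathrm{pd}_{x}(\mathcal{I}_{d})\le 1$ for every $x$, finishing the proof. The only nonformal ingredients are this "projective dimension is fibrewise for flat modules" lemma and relative Serre vanishing with cohomology-and-base-change.
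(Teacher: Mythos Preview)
Your argument is correct and follows the same route as the paper: construct $\mathcal{W}_{d}=\pi^{*}\pi_{*}(\mathcal{I}_{d}(r))\otimes\mathcal{O}(-r)$ for $r\gg 0$ via relative Serre vanishing, then show the kernel $\mathcal{V}_{d}$ is locally free. The paper dispatches the second step by citing Lemma~2.1.7 of Huybrechts--Lehn, whose content is exactly the ``projective dimension is fibrewise for flat modules'' argument you spell out (flatness over the base plus regularity of the fibres of $p_{1}$ bounds the projective dimension of $\mathcal{I}_{d}$ by $\dim S-1=1$); so your detailed Nakayama/lifting sketch is essentially a reproof of that lemma rather than a different approach.
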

\begin{proof}
  For a sufficient large integer $r$, let
  $\mathcal{W}_{d}=\pi^{*}\pi_{*}\mathcal{I}_{d}(r)\otimes \mathcal{O}(-r)$, where $\pi
  $ is the projection map from $Quot_{d}^{\circ }\times S$ to $Quot_{d}^{\circ }$. There is a
  surjective map $\mathcal{W}_{d}\to \mathcal{I}_{d}$. Let $\mathcal{V}_{d}$ be its kernel and $\mathcal{V}_{d}$
  is also locally free by Lemma 2.1.7 of \cite{huybrechts2010geometry}.
\end{proof}
Now we generalize the Quot scheme to the case of  a sequence of
non-decreasing integers $d_{\bullet}=(d_{0},d_{1},\ldots,d_{l})$, such
that $d_{0}=0$ and $d_{l}=d$. Fix a flag of vector spaces $F=\{k^{d_{1}}\subset \ldots \subset k^{d_{l}}\}$. Let
$$ Quot_{d_{\bullet}}^{\circ }=\prod_{i=1}^{k}Quot_{d_{i}-d_{i-1}}^{\circ }.$$

\begin{definition}
  For any closed point $\{\phi:k^{d}\otimes
\mathcal{O}_{S}\twoheadrightarrow \mathcal{E}_{d}\}$ of
$Quot_{d}^{\circ }$, let $\mathcal{E}_{d_i}$ be the image of
$\phi_{i}=\phi|_{k^{d_{i}}\otimes \mathcal{O}_{S}}$. By
\cite{minets18:cohom_hall_higgs} $Flag_{d_{\bullet}}^{\circ }$, the
subset of $Quot_{d}^{\circ }$ which consists of quotients of $\mathcal{E}_{d}$ such that for any $i$
$$H^{0}(\phi_{i}):k^{d_{i}}\cong H^{0}(\mathcal{E}_{d_{i}})$$
is a closed subscheme and denoted by $Flag_{d_{\bullet}}^{\circ}$. The
inclusion map is denoted by $i_{d_{\bullet}}$:
\begin{equation}
  \label{eq:def1}
  i_{d_{\bullet}}:Flag_{d_{\bullet}}^{\circ} \to Quot_{d}^{\circ }.
\end{equation}

\end{definition}

For each $i$, there is a universal quotients of coherent sheaves
$$\phi_{i}:\mathcal{O}^{d_{i}}\to \mathcal{E}_{d_{i}}$$
 over $Flag_{d_{\bullet}}\times S$. Moreover, fixing an isomorphism
$k^{d_{i}-d_{i-1}}=k^{d_{i}}/k^{d_{i-1}}$, let
$\mathcal{E}_{d_{i,i-1}}=\mathcal{E}_{d_{i}}/\mathcal{E}_{d_{i,i-1}}$. Then
$$\phi_{i,i-1}:\mathcal{O}^{d_{i}-d_{i-1}}\to
\mathcal{E}_{d_{i,i-1}}$$ is also surjective. It induces a morphism
\begin{equation}
  \label{eq:def2}
  p_{d_{\bullet}}:Flag_{d_{\bullet}}^{\circ }\to Quot_{d_{\bullet}}^{\circ }.
\end{equation}

\subsection{Two term complexes $W_{m,n}$ and $V_{n,m}$}\label{sec:res}
Given two non-negative integers $n,m$, let $d_{\bullet}=(0,m,m+n)$ and
\begin{center}
  \begin{tabular}{ccc}
  $Quot_{d_{\bullet}}^{\circ}$ & denote & $Quot_{n,m}^{\circ}$, \\
    $Flag_{d_{\bullet}}^{\circ}$ & denote & $Flag_{n,m}^{\circ}$, \\
    $p_{d_{\bullet}}$ & denote & $p_{n,m}$.
\end{tabular}
\end{center}
\begin{definition}
  Given a resolution of locally free sheaves of
$\mathcal{I}_{m}$ over $Quot_{m,n}^{\circ }\times S$ by \cref{eq:2.1},
we define the two term complexes
\begin{align*}
  W_{n,m}=\pi_{*}\mathcal{H}om(\mathcal{W}_{m},\mathcal{E}_{n})=\pi_{*}(\mathcal{W}_{m}^{\vee}\otimes \mathcal{E}_{n}) \\
  V_{n,m,}=\pi_{*}\mathcal{H}om(\mathcal{V}_{m},\mathcal{E}_{n})=\pi_{*}(\mathcal{V}_{m}^{\vee}\otimes \mathcal{E}_{n}).
\end{align*}
where $\pi$ is the projection map from $Quot_{n,m}^{\circ }\times S$
to $Quot_{n,m}^{\circ }$.

The morphism $v_{m}:\mathcal{W}_{m}\to \mathcal{V}_{m}$ naturally induces
a morphism between the two term complexes $W_{n,m}\to V_{n,m}$, which is denoted by $\psi_{n,m}$.

\end{definition}

\begin{proposition}
  \label{prop:2.6}
  $W_{n,m}$ and $V_{n,m}$ are locally free sheaves over
  $Quot_{n,m}^{\circ}$ and
  $$R^{i}\pi_{*}(\mathcal{H}om(\mathcal{W}_{m},\mathcal{E}_{n}))=0 ,\ R^{i}\pi_{*}(\mathcal{H}om(\mathcal{V}_{m},\mathcal{E}_{n}))=0$$
  for $i>0$. 
\end{proposition}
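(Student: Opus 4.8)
The plan is to reduce everything to the elementary fact that, for a projective morphism $\pi$, a coherent sheaf which is flat over the target and whose support is finite over the target has vanishing higher direct images and a locally free zeroth direct image. Since $\mathcal{W}_m$ and $\mathcal{V}_m$ are locally free by \cref{lem:res}, we have $\mathcal{H}om(\mathcal{W}_m,\mathcal{E}_n)\cong\mathcal{W}_m^{\vee}\otimes\mathcal{E}_n$ and $\mathcal{H}om(\mathcal{V}_m,\mathcal{E}_n)\cong\mathcal{V}_m^{\vee}\otimes\mathcal{E}_n$, so it suffices to prove both assertions for a sheaf of the form $\mathcal{L}\otimes\mathcal{E}_n$ with $\mathcal{L}$ locally free on $Quot_{n,m}^{\circ}\times S$, and then specialize to $\mathcal{L}=\mathcal{W}_m^{\vee}$ and $\mathcal{L}=\mathcal{V}_m^{\vee}$.

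First I would record the relevant properties of $\mathcal{E}_n$. By construction of the Quot scheme the universal quotient on $Quot_n^{\circ}\times S$ is flat over $Quot_n^{\circ}$, and its pullback to $Quot_{n,m}^{\circ}\times S=Quot_n^{\circ}\times Quot_m^{\circ}\times S$ (still denoted $\mathcal{E}_n$) is flat over $Quot_{n,m}^{\circ}$; tensoring with the locally free sheaf $\mathcal{L}$ preserves flatness over the base, so $\mathcal{L}\otimes\mathcal{E}_n$ is flat over $Quot_{n,m}^{\circ}$. Next, let $Z\subset Quot_{n,m}^{\circ}\times S$ be the scheme-theoretic support of $\mathcal{E}_n$; since $\operatorname{Ann}(\mathcal{E}_n)\subseteq\operatorname{Ann}(\mathcal{L}\otimes\mathcal{E}_n)$, the sheaf $\mathcal{L}\otimes\mathcal{E}_n$ is the direct image of a coherent $\mathcal{O}_Z$-module $\mathcal{G}$, which is again flat over $Quot_{n,m}^{\circ}$. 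The morphism $Z\to Quot_{n,m}^{\circ}$ is proper because $S$ is projective, and it is quasi-finite because for every point $b$ the sheaf $\mathcal{E}_n|_{\{b\}\times S}$ has length $n$ and hence $0$-dimensional support; a proper quasi-finite morphism is finite, so $Z\to Quot_{n,m}^{\circ}$ is finite, in particular affine.

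From here everything is formal. Since a finite morphism is affine, $R^i\pi_*(\mathcal{L}\otimes\mathcal{E}_n)=R^i(Z\to Quot_{n,m}^{\circ})_*\mathcal{G}=0$ for all $i>0$; applying this to $\mathcal{L}=\mathcal{W}_m^{\vee}$ and $\mathcal{L}=\mathcal{V}_m^{\vee}$ gives the vanishing assertion of the proposition. Moreover, pushforward along an affine morphism is exact, hence sends the $Quot_{n,m}^{\circ}$-flat coherent sheaf $\mathcal{G}$ to a $Quot_{n,m}^{\circ}$-flat coherent sheaf on $Quot_{n,m}^{\circ}$; as $Quot_{n,m}^{\circ}$ is Noetherian, a flat coherent sheaf on it is locally free, so $\pi_*(\mathcal{L}\otimes\mathcal{E}_n)$ is locally free. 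Therefore $W_{n,m}$ and $V_{n,m}$ are locally free over $Quot_{n,m}^{\circ}$. (Alternatively one may invoke cohomology and base change: the fibrewise higher cohomology of $\mathcal{L}\otimes\mathcal{E}_n$ vanishes and $\dim_k H^0\!\big(S,(\mathcal{L}\otimes\mathcal{E}_n)|_b\big)=n\operatorname{rk}\mathcal{L}$ is constant in $b$, whence local freeness.)

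There is no genuine obstacle; the only point that needs care is the verification that the support $Z$ of $\mathcal{E}_n$ is finite over $Quot_{n,m}^{\circ}$, which rests on the projectivity of $S$ (to get properness) together with the fact that $\mathcal{E}_n$ restricts to a length-$n$, hence $0$-dimensional, sheaf on every fibre (to get quasi-finiteness). Once $Z\to Quot_{n,m}^{\circ}$ is known to be affine, both the vanishing of higher direct images and the local freeness of $\pi_*$ are immediate.
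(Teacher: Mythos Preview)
Your argument is correct and takes a genuinely different route from the paper. The paper proves the proposition by invoking cohomology and base change (Hartshorne \RN{3}, Theorem 12.11): it computes that on each fibre $\{b\}\times S$ the sheaf $\mathcal{W}_m^{\vee}\otimes\mathcal{E}_n$ is $0$-dimensional of length $nr$ (with $r=\operatorname{rk}\mathcal{W}_m$), so $h^i$ vanishes for $i>0$ and $h^0$ is constant; then the semicontinuity machinery yields the vanishing of $R^i\pi_*$ and the local freeness of $\pi_*$. You instead exploit directly that the scheme-theoretic support $Z$ of $\mathcal{E}_n$ is \emph{finite} over $Quot_{n,m}^{\circ}$ (proper plus quasi-finite), hence affine; vanishing of higher direct images is then immediate, and local freeness follows from the elementary fact that pushforward along an affine map preserves flatness over the base, together with flat $+$ coherent $=$ locally free on a Noetherian scheme. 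Your approach is more self-contained and avoids the base-change theorem altogether; the paper's approach is the standard reflex and has the minor advantage of explicitly identifying the rank of $W_{n,m}$ as $nr$. You even note the paper's method as an alternative in your final paragraph, so you are aware of both routes.
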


We first recall a base change theorem for cohomology of
coherent sheaves:
\begin{theorem}[Chapter \RN{3}, Theorem 12.11 of
  \cite{hartshorne2013algebraic}]
  \label{thm:2.7}
  Let $f:X\to Y$ be a projective morphism of noetherian schemes, and let
  $\mathcal{F}$ be a coherent sheaf on $X$, flat over $Y$. Let $y$ be a closed point of
  $Y$. Then for any non-negative integer, there is a natural map
  $$\phi^{i}(y):R^{i}f^{*}(\mathcal{F})\otimes k(y)\to
  H^{i}(X_{y},\mathcal{F}_{y}).$$
  \begin{enumerate}
  \item\label{2.7.1}  If $\phi^{i}(y)$ is surjective, then it is an isomorphism.
  \item\label{2.7.2}  If $R^{i}f_{*}(\mathcal{F})$ is locally free in a neighborhood of $y$, then $\phi^{i-1}(y)$
  is also surjective.
  \end{enumerate}
\end{theorem}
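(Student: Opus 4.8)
The plan is to reduce the statement to a purely commutative-algebra assertion about a single finite complex of free modules computing the cohomology of $\mathcal{F}$ compatibly with base change, after which both parts are elementary. Both assertions are local on $Y$, so I would first replace $Y$ by an affine open $\operatorname{Spec}A$ with $A$ Noetherian, and then --- since the two implications only concern the behaviour near $y$ --- localize $A$ at $y$, writing $\mathfrak{m}$ for its maximal ideal and $k=k(y)$ for the residue field.

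The essential input is the following: because $f$ is projective and $\mathcal{F}$ is coherent and flat over $A$, there is a bounded complex $C^{\bullet}=(0\to C^{0}\to C^{1}\to\cdots\to C^{N}\to 0)$ of finitely generated free $A$-modules, together with natural isomorphisms $H^{p}\bigl(X\times_{\operatorname{Spec}A}\operatorname{Spec}B,\ \mathcal{F}\otimes_{A}B\bigr)\cong H^{p}(C^{\bullet}\otimes_{A}B)$ for every $A$-algebra, and indeed every $A$-module, $B$. To produce it I would take the \v{C}ech complex of $\mathcal{F}$ for a finite affine cover of $X$: its terms $\Gamma(U,\mathcal{F})$ are flat $A$-modules (as $\mathcal{F}$ is flat over $A$), its cohomology is finitely generated over $A$ (as $f$ is projective, by Grothendieck finiteness), and it already commutes with $-\otimes_{A}B$ because each $U$ is affine and $\mathcal{F}$ is flat; a standard homological lemma then replaces it up to quasi-isomorphism by a finite free complex $C^{\bullet}$ for which this base-change compatibility persists. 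Under these identifications $R^{p}f_{*}\mathcal{F}=H^{p}(C^{\bullet})$, the fibre cohomology $H^{p}(X_{y},\mathcal{F}_{y})=H^{p}(C^{\bullet}\otimes_{A}k)$, and $\phi^{p}(y)$ becomes the canonical comparison map $H^{p}(C^{\bullet})\otimes_{A}k\to H^{p}(C^{\bullet}\otimes_{A}k)$.

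It then remains to prove the two assertions for a finite complex $C^{\bullet}$ of free modules over the Noetherian local ring $(A,\mathfrak{m},k)$. For the first, suppose $H^{i}(C^{\bullet})\otimes k\to H^{i}(C^{\bullet}\otimes k)$ is surjective; comparing the cycles and boundaries of $C^{\bullet}$ with those of $C^{\bullet}\otimes k$, surjectivity forces the reduced differential $d^{i}\otimes k$ to have the same rank as the generic rank of $d^{i}$, so by Nakayama the cokernel of $d^{i}$ is free near $y$ and the formation of $\ker d^{i}$ commutes with $-\otimes k$; a short diagram chase using right-exactness of $-\otimes k$ then shows $\phi^{i}(y)$ is an isomorphism, and upper semicontinuity of fibre dimensions propagates this to a neighbourhood. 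For the second, assuming $\phi^{i}(y)$ is already an isomorphism, I would use the same circle of ideas to set up the exchange lemma for the functors $B\mapsto H^{p}(C^{\bullet}\otimes_{A}B)$: local freeness of $R^{i}f_{*}\mathcal{F}=H^{i}(C^{\bullet})$ near $y$ lets one split off the $H^{i}$-contribution and put $C^{\bullet}$ into a normal form near $y$ in which the comparison map in degree $i-1$ is manifestly surjective, i.e. $\phi^{i-1}(y)$ is surjective. The main obstacle is the construction of the finite free complex $C^{\bullet}$ with its universal base-change property together with the Nakayama bookkeeping in the first assertion; the second assertion is then a short formal consequence. All of this is carried out in detail in Chapter \RN{3}, Section 12 of \cite{hartshorne2013algebraic}.
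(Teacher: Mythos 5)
The paper gives no proof of this statement---it is quoted as Theorem 12.11 of Chapter \RN{3} of \cite{hartshorne2013algebraic} and used as a black box---and your sketch is exactly the standard Grothendieck--Mumford argument from that reference: replace $Rf_*\mathcal{F}$ by a bounded complex of finite free modules computing cohomology universally in the base, then do Nakayama-style linear algebra over the local ring at $y$; this is correct. The only point worth flagging is that in part (2) you rightly reinstate the standing hypothesis of Hartshorne's 12.11(b) that $\phi^{i}(y)$ be surjective, which the paper's transcription omits (and without which the implication fails, e.g.\ for a Koszul-type complex), but which does hold in the paper's sole application in Proposition \ref{prop:2.6}, where $i=1$ and the fibres have vanishing $H^{1}$.
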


Now we prove the Proposition \ref{prop:2.6}.
\begin{proof}
  Let $r$ be the rank of $\mathcal{W}_{m}$ and we consider the projection
  map
  $$\pi:Quot_{n,m}^{\circ}\times S \to Quot_{n,m}^{\circ}.$$
  For any closed point $b\in Quot_{n,m}^{\circ}$,
  $\pi^{-1}(b)=b\times S$. Hence $\mathcal{W}_{m}^{\vee}\otimes
  \mathcal{E}_{n}|_{b\times S}$ has dimension $0$ and  length $nr$.
  Thus for any closed point $b\in B$
  \begin{align*}
    h^{i}(b\times S,\mathcal{W}_{m}^{\vee}\otimes
    \mathcal{E}_{n}|_{b\times S})=0, & \textrm{ if } i>0 \\
   h^{0}(b\times S,\mathcal{W}_{m}^{\vee}\otimes
    \mathcal{E}_{n}|_{b\times S})=nr, & \textrm{ if } i=0
  \end{align*}

  By Theorem \ref{thm:2.7}, we have
  $$R^{i}\pi_{*}(\mathcal{H}om(\mathcal{W},\mathcal{E}_{n}))=0$$
  for $i>0$. Let us take $i=1$, $\phi^{0}(b)$ is also
  an isomorphism by \eqref{2.7.2} of Theorem \ref{thm:2.7}. Hence
  $\phi^{0}$ is also an isomorphism and therefore $W_{n,m}$ is locally free.

  The analogous proof also holds for $V_{n,m}$.
\end{proof}

\subsection{Two Term Complexes and Flag Schemes}
\label{sec:dia}
There is a commutative diagram of coherent sheaves over $Flag_{n,m}^{\circ }\times S$, which are flat over $Flag_{n,m}^{\circ }$
\begin{equation*}
  \label{eq:fl1}
  \begin{tikzcd}
    & & & 0 \ar{d} &\\
    & & & \mathcal{V}_{m} \ar{d}{v_{m}} &\\
    & & & \mathcal{W}_{m} \ar{d}{w_{m}} &\\
   0\ar{r} & k^{n}\otimes \mathcal{O} \ar{d}{\phi_{n}} \ar{r} & k^{n+m}\otimes \mathcal{O} \ar{d}{\phi_{n+m}} \ar[r,shift right, swap]  & k^{m}\otimes \mathcal{O} \ar{d}{\phi_{m}} \ar{r} \ar[l, shift right, swap, "l"] & 0 \\
   0\ar{r} & \mathcal{E}_{n} \ar{r}{h_{n}} & \mathcal{E}_{n+m} \ar{r}{h_{n+m}} & \mathcal{E}_{m} \ar{r} \ar{d} & 0\\
   & & & 0, &
 \end{tikzcd}
\end{equation*}
where all the columns and rows are exact, and $l:k^{m}\otimes
\mathcal{O}\to k^{n+m}\otimes \mathcal{O}$ is defined by
$l(v)=(0,v)$. Then $\phi_{n+m}\circ l\circ w_{m}:\mathcal{W}_{m}\to \mathcal{E}_{n+m}$
has image $0$ in $\mathcal{E}_{m}$. So there is a unique morphism
$$t_{n,m}:\mathcal{W}_{m}\to \mathcal{E}_{n}$$
such that $h_{n}\circ
t_{n,m}=\phi_{n+m}\circ l\circ w_{m}$. $t_{n,m}$ corresponds to a
global section of $W_{n,m}$. Moreover, the diagram
 \begin{equation}
    \label{eq:2.4}
    \begin{tikzcd}
    Flag_{n,m}^{\circ } \ar{r}{t_{n,m}} \ar{d}{p_{n,m}} & W_{n,m} \ar{d}{\psi_{n,m}} \\
    Quot_{n,m}^{\circ } \ar{r}{i_{V}} & V_{n,m}    
  \end{tikzcd}
\end{equation}
is commutative, where $i_{V}$ is the zero section.

\begin{proposition}
  \label{prop:2.14}
The commutative diagram \eqref{eq:2.4} is Cartesian.
\end{proposition}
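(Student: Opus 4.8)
The plan is to verify the Cartesian property by computing the fiber product $Quot_{n,m}^{\circ} \times_{V_{n,m}} W_{n,m}$ directly and exhibiting a natural isomorphism with $Flag_{n,m}^{\circ}$. Concretely, an $X$-point of the fiber product is a point $b$ of $Quot_{n,m}^{\circ}$ together with a global section $\sigma$ of $W_{n,m}$ pulled back to $X$, i.e. a morphism $s : \mathcal{W}_m \to \mathcal{E}_n$ over $X \times S$, such that $\psi_{n,m}(\sigma) = 0$ in $V_{n,m}$, i.e. the composite $\mathcal{V}_m \xrightarrow{v_m} \mathcal{W}_m \xrightarrow{s} \mathcal{E}_n$ vanishes. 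I want to show this data is equivalent to a point of $Flag_{n,m}^{\circ}$ lying over $b$.

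First I would unwind what a point of $Flag_{n,m}^{\circ}$ provides: by the construction in \cref{sec:dia}, it gives the commutative diagram of sheaves over $X \times S$ with exact rows and columns, and from it the canonical section $t_{n,m} : \mathcal{W}_m \to \mathcal{E}_n$ with $\psi_{n,m} \circ t_{n,m} = 0$ (because $h_n \circ t_{n,m} \circ v_m = \phi_{n+m} \circ l \circ w_m \circ v_m = 0$ and $h_n$ is injective). This is exactly the content of the commutativity of \eqref{eq:2.4}, so there is a canonical map $Flag_{n,m}^{\circ} \to Quot_{n,m}^{\circ} \times_{V_{n,m}} W_{n,m}$; the real work is to invert it. So conversely, starting from $(b, s)$ with $s \circ v_m = 0$: since $\mathcal{V}_m = \ker(w_m)$, the vanishing $s \circ v_m = 0$ means $s$ factors through $\mathcal{I}_m = \mathcal{W}_m/\mathcal{V}_m$, giving $\bar{s} : \mathcal{I}_m \to \mathcal{E}_n$. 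Composing with $\mathcal{I}_m \hookrightarrow k^m \otimes \mathcal{O}$ and using that $\mathcal{E}_{n+m}$ is an extension of $\mathcal{E}_m$ by $\mathcal{E}_n$, I would build a quotient $k^{n+m}\otimes\mathcal{O} \twoheadrightarrow \mathcal{E}'$ refining $\mathcal{E}_{n+m}$, whose associated subquotient flag lands in $Flag_{n,m}^{\circ}$ — the key point being that $\bar s$ records precisely how the sub-object $\mathcal{E}_n \subset \mathcal{E}_{n+m}$ is "glued", and conversely any such gluing over $k^{n+m}\otimes\mathcal{O}$ inducing the given $\mathcal{E}_{n+m}$ is determined by a map $\mathcal{I}_m \to \mathcal{E}_n$.

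The main obstacle I anticipate is checking that the correspondence is an isomorphism of \emph{functors} (or schemes), not merely a bijection on points: I must verify it respects the scheme structure, in particular that the closed subscheme structure on $Flag_{n,m}^{\circ} \subset Quot_{m+n}^{\circ}$ matches the one cut out by the section $t_{n,m}$ vanishing in $V_{n,m}$, and that flatness over the base (used implicitly in \cref{sec:dia} to even form the diagram) is preserved on both sides. A clean way to handle this is to note both sides are closed subschemes of a common ambient space — $Flag_{n,m}^{\circ}$ via $i_{d_\bullet}$ inside $Quot_{m+n}^{\circ}$, hence inside $Quot_{n,m}^{\circ} \times Quot_{m+n}^{\circ}$ after adding the obvious projection, and the fiber product $Quot_{n,m}^{\circ} \times_{V_{n,m}} W_{n,m}$ as the vanishing locus of a section — and then compare the defining ideals, using the universal property of $Flag$ from \cite{minets18:cohom_hall_higgs} to see that a $T$-point of the fiber product gives a flag of quotients satisfying exactly the $H^0$-isomorphism conditions defining $Flag_{n,m}^{\circ}$. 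I would finish by invoking Yoneda: the two functors of points agree naturally, so the schemes are canonically isomorphic over $Quot_{n,m}^{\circ} \times V_{n,m}$, which is the assertion.
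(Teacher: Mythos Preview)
Your approach is the same as the paper's: work functorially via Yoneda and construct the inverse to the natural map $Flag_{n,m}^{\circ}\to Quot_{n,m}^{\circ}\times_{V_{n,m}}W_{n,m}$. The reduction ``$s\circ v_m=0$ forces $s$ to factor through $\mathcal{I}_m$'' is exactly right and matches the paper.

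The gap is in the construction of the inverse. From an $X$-point of the fiber product you only have the point of $Quot_{n,m}^{\circ}=Quot_n^{\circ}\times Quot_m^{\circ}$, which gives you $\mathcal{E}_n$ and $\mathcal{E}_m$ separately, together with $\bar s:\mathcal{I}_m\to\mathcal{E}_n$. There is \emph{no} $\mathcal{E}_{n+m}$ yet: it has to be manufactured from this data. Your sentences ``using that $\mathcal{E}_{n+m}$ is an extension of $\mathcal{E}_m$ by $\mathcal{E}_n$'' and ``build a quotient $k^{n+m}\otimes\mathcal{O}\twoheadrightarrow\mathcal{E}'$ refining $\mathcal{E}_{n+m}$'' presuppose the object you are trying to produce. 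What the paper does (and what you are gesturing at with ``gluing'') is define $\mathcal{E}_{n+m}$ as the cokernel of
\[
\mathfrak{t}=(t_{n,m},w_m):\mathcal{W}_m\longrightarrow \mathcal{E}_n\oplus(k^m\otimes\mathcal{O}),
\]
equivalently the pushout of $\mathcal{E}_n\xleftarrow{\bar s}\mathcal{I}_m\hookrightarrow k^m\otimes\mathcal{O}$, and then checks via a simple diagram chase that this sits in the required short exact sequence $0\to\mathcal{E}_n\to\mathcal{E}_{n+m}\to\mathcal{E}_m\to 0$, is flat over $X$, and receives the surjection $\phi_{n+m}$ from $k^{n+m}\otimes\mathcal{O}$. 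That explicit cokernel is the missing step in your sketch.

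Your final paragraph is unnecessary. Once the two functors of points agree naturally, Yoneda gives an isomorphism of schemes, full stop; there is no residual issue of ``scheme structure versus bijection on points'' and no need to embed both sides in a common ambient and compare ideals. The flatness concern you raise is real but is resolved instantly by the two-out-of-three property applied to $0\to\mathcal{E}_n\to\mathcal{E}_{n+m}\to\mathcal{E}_m\to 0$.
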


\begin{proof}
  \begin{description}
  \item[Step 1]   Let $Flag=W_{n,m}\times _{Quot_{n,m}^{\circ }}V_{n,m}$. Diagram
  \cref{eq:2.4} induces a morphism  $\tau:Flag_{n,m}^{\circ  }\to
  Flag$ and a natural transformation of functors
  $$\tau:\operatorname{Hom}(-,Flag_{n,m}^{\circ})\to
  \operatorname{Hom}(-,Flag).$$
  By Yoneda lemma, we only need to  construct another natural transformation
  $$\tau':\operatorname{Hom}(-,Flag)\to
  \operatorname{Hom}(-,Flag_{n,m}^{\circ})$$
  which is inverse to $\tau$.
\item[Step 2]
  Given a test scheme $X$ and a morphism $f:X\to Flag$, there is a
  commutative diagram:
  \begin{equation}
    \label{eq:correct1}
    \begin{tikzcd}
      X \ar{r}{X_{W}} \ar{d}{X_{Q}} & W_{n,m} \ar{d}{\psi_{n,m}} \\
      Quot_{n,m}^{\circ} \ar{r}{i_{V}}  & V_{n,m}
    \end{tikzcd}
  \end{equation}
  $X_{Q}$ induces a long exact sequence of coherent sheaves
  $$0\to
  \mathcal{V}_{m}\xrightarrow{v_{m}}\mathcal{W}_{m}\xrightarrow{w_{m}}k^{m}\otimes
  \mathcal{O}\to \mathcal{E}_{m}\to 0$$
  over $X\times S$ and a universal coherent sheaf
  $\mathcal{E}_{n}\in Coh(X\times S)$. $X_{W}$
  induces a morphism $t_{n,m}:\mathcal{W}_{m}\to \mathcal{E}_{n}$ such
  that $t_{n,m}\circ v_{m}=0$.
\item[Step 3] Consider
  \begin{align*}
     \mathfrak{t}: \mathcal{W}_{m}\to & \mathcal{E}_{n}\oplus (k^{m}\otimes \mathcal{O}) \\
    a\to &(t_{n,m}(a),w_{m}(a))
  \end{align*}
    and let $\mathcal{E}_{n+m}$ be the cokernel of $\mathfrak{t}$. The
  following sequence is exact:
  \begin{equation}
    \label{eq:correct2}
    0\to \mathcal{V}_{m}\xrightarrow{v_{m}}\mathcal{W}_{m}\xrightarrow{\mathfrak{t}} \mathcal{E}_{n}\oplus (k^{m}\otimes \mathcal{O}) \xrightarrow{p}
   \mathcal{E}_{n+m}\to  0.
 \end{equation}
 Since $\mathcal{I}_{m}=\mathcal{W}_{m}/\mathcal{V}_{m}$, equation \cref{eq:correct2} induces another exact sequence:
  $$0\to
  \mathcal{I}_{m}\xrightarrow{\mathfrak{t}'}\mathcal{E}_{n}\oplus
  (k^{m}\otimes
  \mathcal{O})\xrightarrow{p}\mathcal{E}_{n+m}\to 0.$$
  Let $p=\varepsilon_{n}\oplus p_{2}$ for
  \begin{align*}
     \varepsilon_{n}:\mathcal{E}_{n}\to \mathcal{E}_{n+m}, \quad p_{2}:k^{m}\otimes \mathcal{O}\to  \mathcal{E}_{n+m}.
  \end{align*}
  Then we have the following diagram of coherent
  sheaves over $X\times S$.
  \begin{equation}
    \label{eq:correct3}
    \begin{tikzcd}
               & 0 \ar{d}        &  0 \ar{d}          & 0 \ar{d} \\
      0 \ar{r} & 0 \ar{r} \ar{d} &  \mathcal{E}_{n}\ar{r}{id} \ar{d}{(id,0)} & \mathcal{E}_{n} \ar{r} \ar{d}{\varepsilon_{n}} & 0 \\
      0 \ar{r} & \mathcal{I}_{m} \ar{r}{\mathfrak{t}'} \ar{d} & \mathcal{E}_{n}\oplus (k^{m}\otimes \mathcal{O})\ar{r}{p} \ar{d}{(0,id)} & \mathcal{E}_{n+m} \ar{r} \ar[d,dashed,"\varepsilon_{m}"]& 0 \\
      0 \ar{r}  & \mathcal{I}_{m} \ar{r}\ar{d} & k^{m}\otimes \mathcal{O} \ar{r}{\phi_{n}} \ar{d}{id} & \mathcal{E}_{m} \ar{r} \ar{d} & 0 \\
      & 0 & 0 & 0
    \end{tikzcd}
  \end{equation}
  where all the rows and columns, except the dashed one, are exact sequences. Then
  \begin{enumerate}
  \item $\phi_{n}\circ (0,id) \circ \mathfrak{t}'=0:\mathcal{I}_{m}\to
    \mathcal{E}_{m}$, hence there is a dashed morphism
    $\varepsilon_{m}$ in the above diagram \eqref{eq:correct3} to make
    all the rows and columns to be exact.
  \item $\mathcal{E}_{n+m}$ is also flat over $X$, by the fact $\mathcal{E}_{m}$ and
    $\mathcal{E}_{n}$ are flat over $X$.
  \item $\phi_{n}\oplus id:\mathcal{O}^{n}\oplus \mathcal{O}^{m}\to \mathcal{E}^{n}\oplus \mathcal{O}_{m}$ induced a map
    $\phi_{n+m}=p\circ(\phi_{n}\oplus id):k^{n+m}\otimes \mathcal{O}\to \mathcal{E}_{n+m}$, and we have the
    following diagram:
    \begin{equation*}
      \begin{tikzcd}
         0\ar{r} & k^{n}\otimes \mathcal{O} \ar{d}{\phi_{n}} \ar{r} & k^{n+m}\otimes \mathcal{O} \ar{d}{\phi_{n+m}} \ar{r} & k^{m}\otimes \mathcal{O} \ar{d}{\phi_{m}} \ar{r}  & 0 \\
   0\ar{r} & \mathcal{E}_{n} \ar{r} \ar{d} & \mathcal{E}_{n+m} \ar{r} \ar{d} & \mathcal{E}_{m} \ar{r} \ar{d} & 0\\
   & 0 & 0 & 0, &
      \end{tikzcd}
    \end{equation*}
  \end{enumerate}
  where all the columns and rows are exact. It induces a
  morphism $\tau'(f)\in \operatorname{Hom}(X, Flag_{n,m}^{\circ })$. The above process
  actually defines a natural transformation: 
  $$\tau':\operatorname{Hom}(-,Flag)\to\operatorname{Hom}(-,Flag_{n,m}^{\circ }) $$
  which is inverse to $\tau$ by checking the correspondence morphism functorially.
  \end{description}
\end{proof}

\subsection{Refined Gysin Map} The Cartesian diagram \eqref{eq:2.4}, induces the refined Gysin map:
$$\psi_{n,m}^{!}:K(Quot_{n,m}^{\circ })\to K(Flag_{n,m}^{\circ }).$$
Moreover, for any Cartesian diagram
\begin{equation*}
  \begin{tikzcd}
    X \ar{r} \ar{d} & Flag_{n,m}^{\circ } \ar{d} \\
    Y \ar{r}        & Quot_{n,m}^{\circ }
  \end{tikzcd}
\end{equation*}
the refined Gysin map:
$$\psi_{n,m}^{!}:K(Y)\to K(X).$$
is always well-defined.
\begin{proposition}
  \label{prop:ind}
  $\psi_{n,m}^{!}$ does not depend on the choice of resolutions \cref{eq:2.1} of the
  kernel sheaf $\mathcal{I}_{m}$.
\end{proposition}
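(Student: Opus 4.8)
The plan is to compare the refined Gysin maps arising from two different resolutions of $\mathcal{I}_m$ by reducing to the case where one resolution dominates the other. First I would observe that any two resolutions of the form \cref{eq:2.1} admit a common refinement: given resolutions $0\to \mathcal{V}_m\to \mathcal{W}_m\to \mathcal{I}_m\to 0$ and $0\to \mathcal{V}'_m\to \mathcal{W}'_m\to \mathcal{I}_m\to 0$, the fiber product $\mathcal{W}''_m=\mathcal{W}_m\times_{\mathcal{I}_m}\mathcal{W}'_m$ surjects onto both $\mathcal{W}_m$ and $\mathcal{W}'_m$, and the kernel of $\mathcal{W}''_m\to\mathcal{I}_m$ is again locally free (being an extension of locally free sheaves, or by the same argument as in \cref{lem:res}). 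Hence it suffices to prove: if $0\to\mathcal{V}_m\to\mathcal{W}_m\to\mathcal{I}_m\to 0$ and $0\to\mathcal{V}'_m\to\mathcal{W}'_m\to\mathcal{I}_m\to 0$ are two such resolutions together with a surjection $\mathcal{W}'_m\twoheadrightarrow\mathcal{W}_m$ compatible with the maps to $\mathcal{I}_m$, then the two resolutions define the same $\psi^!_{n,m}$.

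In that dominated situation, let $\mathcal{K}=\ker(\mathcal{W}'_m\twoheadrightarrow\mathcal{W}_m)$, which is locally free and is also $\ker(\mathcal{V}'_m\twoheadrightarrow\mathcal{V}_m)$; so we get short exact sequences of locally free sheaves $0\to\mathcal{K}\to\mathcal{W}'_m\to\mathcal{W}_m\to 0$ and $0\to\mathcal{K}\to\mathcal{V}'_m\to\mathcal{V}_m\to 0$. Applying $\pi_*\mathcal{H}om(-,\mathcal{E}_n)$ and using \cref{prop:2.6} (the higher direct images vanish, so $\pi_*$ is exact on these), I obtain short exact sequences of vector bundles over $Quot_{n,m}^\circ$:
\begin{equation*}
0\to W_{n,m}\to W'_{n,m}\to K_{n,m}\to 0,\qquad 0\to V_{n,m}\to V'_{n,m}\to K_{n,m}\to 0,
\end{equation*}
where $K_{n,m}=\pi_*\mathcal{H}om(\mathcal{K},\mathcal{E}_n)$, and these fit into a commutative diagram with the maps $\psi_{n,m},\psi'_{n,m}$ and the identity on $K_{n,m}$. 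This is precisely the setup of \cref{sec:vec}: take the three-column exact diagram of vector bundles with columns $\psi_{n,m}:W_{n,m}\to V_{n,m}$, $\psi'_{n,m}:W'_{n,m}\to V'_{n,m}$, and $\mathrm{id}:K_{n,m}\to K_{n,m}$. Then $X_3 = \mathrm{id}^{-1}(i_{K_{n,m}}(Quot_{n,m}^\circ)) = Quot_{n,m}^\circ$, the map $\phi_3^!$ is the identity, and \cref{lem:kass} gives $\psi^!\circ\phi_3^! = \phi_2^!$, i.e. the zero-locus of $\psi'_{n,m}$ (which is $Flag_{n,m}^\circ$ by \cref{prop:2.14}) maps to $Flag_{n,m}^\circ$ via a morphism $\psi$ with $\psi^!$ the identity, and $\psi'^!_{n,m}=\psi^!_{n,m}$ as operators $K(Quot_{n,m}^\circ)\to K(Flag_{n,m}^\circ)$. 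To be careful I should check that the identification of both zero-loci with $Flag_{n,m}^\circ$ (via \cref{prop:2.14} applied to each resolution) is compatible with the affine-bundle map $Y\to X_3$ appearing in \cref{lem:kass}, so that the abstract equality $\psi'^!=\psi^!$ really is the equality of the two Gysin maps attached to $Flag_{n,m}^\circ$.

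The main obstacle I anticipate is the bookkeeping in this last compatibility check: \cref{lem:kass} produces its equality of Gysin maps with respect to a specific geometric presentation of the zero-locus $X_2$ as a subscheme of $Y=f'^{-1}(X_3)$, and one must verify that under the Yoneda identifications of \cref{prop:2.14} this presentation matches the one coming directly from diagram \cref{eq:2.4} for the resolution $(\mathcal{W}'_m,\mathcal{V}'_m)$ — in particular that $X_3$ is canonically $Quot_{n,m}^\circ$ (not some larger space) because the column $\mathrm{id}:K_{n,m}\to K_{n,m}$ has zero-locus all of $Quot_{n,m}^\circ$. Once that is pinned down, the statement follows formally, and the flat-base-change statement \cref{lem:flat} together with \cref{lem:kcom} handles the remaining naturality needed to conclude that $\psi^!_{n,m}$ agrees on any base $Y\to Quot_{n,m}^\circ$, not merely on $Quot_{n,m}^\circ$ itself.
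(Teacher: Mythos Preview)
Your argument is correct and follows essentially the same architecture as the paper's --- build a common refinement of the two resolutions and then compare each original resolution with the refinement --- but the details differ in two places.

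First, for the common refinement you take the fiber product $\mathcal{W}''_m=\mathcal{W}_m\times_{\mathcal{I}_m}\mathcal{W}'_m$, whereas the paper takes the direct sum $\mathcal{W}''_m=\mathcal{W}_m\oplus\mathcal{W}'_m$ with the map $w\oplus w'$. Both are valid two-term locally free resolutions dominating the original ones, so this is only a cosmetic difference.

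Second, and more substantively, for the comparison in the dominating case you invoke \cref{lem:kass} with $\phi_3=\mathrm{id}_{K_{n,m}}$, so that $X_3=Quot_{n,m}^\circ$, $\phi_3^!=\mathrm{id}$, $Y=W_{n,m}$, and $\psi=\psi_{n,m}$; then $\psi^!\circ\phi_3^!=\phi_2^!$ reads $\psi_{n,m}^!=\psi'^!_{n,m}$. The paper instead observes directly that the short exact sequence $0\to\mathcal{V}_m\to\mathcal{W}_m\oplus\mathcal{V}''_m\to\mathcal{W}''_m\to 0$ dualizes to exhibit $W''_{n,m}$ as the fiber product $W_{n,m}\times_{V_{n,m}}V''_{n,m}$, and that the surjection $V''_{n,m}\to V_{n,m}$ (with kernel $W'_{n,m}$) is smooth; then a single application of \cref{lem:flat} gives $\psi''^!_{n,m}=\psi^!_{n,m}$. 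So the paper avoids \cref{lem:kass} entirely and uses only the flat base-change lemma, which is lighter. Your route works because \cref{lem:kass} with $\phi_3=\mathrm{id}$ degenerates to exactly this statement, but you are invoking a strictly stronger tool than necessary.

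The compatibility check you flag at the end is not a genuine obstacle: \cref{prop:2.14} identifies the fiber product with $Flag_{n,m}^\circ$ via its functor of points, independently of the resolution, so the two identifications agree canonically. The paper does not comment on this either, for the same reason.
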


\begin{proof}
  Let
  $$0\to\mathcal{V}_{m}\xrightarrow{v}\mathcal{W}_{m}\xrightarrow{w}\mathcal{I}_{m}\to 0$$
  and
  $$0\to\mathcal{V}_{m}'\xrightarrow{v'}\mathcal{W}_{m}'\xrightarrow{w'}\mathcal{I}_{m}\to 0$$
  be two resolutions of $\mathcal{I}_{m}$. Let
  $\mathcal{W}_{m}''=\mathcal{W}_{m}\oplus
  \mathcal{W}_{m}'$. $\mathcal{W}_{m}''\xrightarrow{w\oplus
    w'}\mathcal{I}_{m}$ is surjective and let $\mathcal{V}_{m}''$ be
  its kernel. Then we
  have the following diagram of exact sequences:
  \begin{equation}
    \label{eq:correct1}
    \begin{tikzcd}
      & 0\ar{d} & 0\ar{d} & 0\ar{d} & \\
      0 \ar{r} & \mathcal{V}_{m} \ar{r}{v} \ar{d}{r} & \mathcal{W}_{m} \ar{r}{w} \ar{d}{(id,0)} & \mathcal{I}_{m} \ar{r}\ar{d} & 0 \\
      0 \ar{r} & \mathcal{V}_{m}''\ar{r}{v''} \ar{d} & \mathcal{W}_{m}'' \ar{r}{w\oplus w'}\ar{d}{0\oplus id} & \mathcal{I}_{m} \ar{r} \ar{d} & 0 \\
      0 \ar{r} & \mathcal{W}_{m}' \ar{r} \ar{d} & \mathcal{W}_{m}' \ar{r}\ar{d} & 0 \ar{r} \ar{d} & 0 \\
      & 0 & 0 & 0 & 
    \end{tikzcd}
  \end{equation}
  where $r$ is the restriction of $(id,0)$ to $\mathcal{V}_{m}$. Consider the following short exact sequence:
  \begin{equation*}
    0\to \mathcal{V}_{m}\xrightarrow{(v,r)}\mathcal{W}_{m}\oplus \mathcal{V}_{m}''\xrightarrow{(id,0)\oplus -v''} \mathcal{W}_{m}''\to 0 
\end{equation*}
and the Cartesian diagram:
\begin{equation*}
  \begin{tikzcd}
    W_{m,n}'' \ar{r}{\psi_{n,m}''} \ar{d} & V_{m,n}'' \ar{d} \\
    W_{m,n} \ar{r}{\psi_{n,m}} & V_{m,n}
  \end{tikzcd}
\end{equation*}
Moreover, by diagram \cref{eq:correct1}, we have the short exact
sequence:
$$\mathcal{V}_{m} \to \mathcal{V}_{m}''\to \mathcal{W}_{m}'\to 0,$$
which induces the exact sequence:
$$0\to W_{m,n}'\to V_{m,n}''\to V_{m,n}\to
0.$$ So the above map from $V_{m,n}''$ to $V_{m,n}$ is smooth and
$\psi_{m,n}''^{!}=\psi_{m,n}^{!}$ by Lemma \ref{lem:flat}. Similarly $\psi_{m,n}''^{!}=\psi_{m,n}'^{!}$.
\end{proof}

\subsection{An Associativity Formula }

Now let $n,m,l$ be three non-negative integers and
$d_{\bullet}=(0,n,n+m,n+m+l)$. Let
\begin{center}
  \begin{tabular}{ccc}
  $Flag_{d_{\bullet}}^{\circ }$ & denoted by & $Flag_{n,m,l}^{\circ }$,\\
  $Quot_{d_{\bullet}}^{\circ }$ & denoted by & $Quot_{n,m,l}^{\circ }$.
\end{tabular}
\end{center}
We have the following Cartesian
diagrams:

\begin{minipage}[t]{0.5\linewidth}
  \begin{equation}
    \label{correct:3}
  \begin{tikzcd}
    Flag_{n,m,l}^{\circ }\ar{r} \ar{d} & Flag^{\circ }_{n,m+l} \ar{d}{p_{n,m+l}} \\
    Quot_{n}^{\circ }\times Flag_{m,l}^{\circ } \ar{r} & Quot_{n,m+l}^{\circ }
  \end{tikzcd}
\end{equation}
\end{minipage}
\begin{minipage}[t]{0.5\linewidth}
  \begin{equation}
    \label{eq:correct4}
  \begin{tikzcd}
    Flag_{n,m,l}^{\circ }\ar{r} \ar{d} & Flag^{\circ }_{n+m,l} \ar{d}{p_{n+m,l}} \\
    Flag_{n,m}^{\circ }\times Quot_{l}^{\circ } \ar{r} & Quot_{n+m,l}^{\circ }
  \end{tikzcd}
\end{equation}
\end{minipage}
Hence we have refined Gysin maps
$$\psi_{n,m+l}^{!}:K(Quot_{n}^{\circ }\times Flag_{m,l}^{\circ })\to K(Flag_{n,m,l}^{\circ })$$
$$\psi_{n+m,l}^{!}:K(Flag_{n,m}^{\circ }\times Quot_{l}^{\circ })\to
K(Flag_{n,m,l}^{\circ }).$$
Now we are going to prove an associative formula:
\begin{proposition}
  \label{prop:ass}
  $\psi_{n,m+l}^{!}\psi_{m,l}^{!}=\psi_{n+m,l}^{!}\psi_{n,m}^{!}:K(Quot_{n,m,l}^{\circ})\to K(Flag_{n,m,l}^{\circ })$.
\end{proposition}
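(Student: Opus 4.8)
The plan is to choose the auxiliary resolutions compatibly, use \cref{lem:kass} to split each side of the desired identity as a composite of two refined Gysin maps, commute the two ``inner'' maps using \cref{lem:kcom}, and finally identify the two ``outer'' maps with each other.

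First, by \cref{prop:ind} I am free to pick the resolutions entering the definitions of $\psi_{n,m+l}$ and $\psi_{n+m,l}$. Fix resolutions $0\to\mathcal{V}_m\to\mathcal{W}_m\to\mathcal{I}_m\to0$ over $Quot_m^\circ\times S$ and $0\to\mathcal{V}_l\to\mathcal{W}_l\to\mathcal{I}_l\to0$ over $Quot_l^\circ\times S$. Over $Flag_{m,l}^\circ\times S$ the flag produces $0\to\mathcal{E}_m\to\mathcal{E}_{m+l}\to\mathcal{E}_l\to0$ and hence, by the snake lemma, $0\to\mathcal{I}_m\to\mathcal{I}_{m+l}\to\mathcal{I}_l\to0$; the horseshoe lemma then gives a resolution $0\to\mathcal{V}_m\oplus\mathcal{V}_l\to\mathcal{W}_m\oplus\mathcal{W}_l\to\mathcal{I}_{m+l}\to0$, filtered compatibly with the augmentations, which I use to build $\psi_{n,m+l}$ ($\psi_{n+m,l}$ needs only the resolution of $\mathcal{I}_l$). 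Applying $\pi_*\mathcal{H}om(\mathcal{W}_l,-)$ and $\pi_*\mathcal{H}om(\mathcal{V}_l,-)$ to $0\to\mathcal{E}_n\to\mathcal{E}_{n+m}\to\mathcal{E}_m\to0$ — the higher direct images vanish by \cref{prop:2.6} — yields, over $C:=Flag_{n,m}^\circ\times Quot_l^\circ$, a commutative diagram of vector bundles with exact rows,
\begin{equation*}
\begin{tikzcd}
0 \ar{r} & W_{n,l} \ar{r} \ar{d}{\psi_{n,l}} & W_{n+m,l} \ar{r} \ar{d}{\psi_{n+m,l}} & W_{m,l} \ar{r} \ar{d}{\psi_{m,l}} & 0 \\
0 \ar{r} & V_{n,l} \ar{r} & V_{n+m,l} \ar{r} & V_{m,l} \ar{r} & 0,
\end{tikzcd}
\end{equation*}
and symmetrically, applying $\pi_*\mathcal{H}om(-,\mathcal{E}_n)$ to the horseshoe filtration of $\mathcal{W}_{m+l}$ and $\mathcal{V}_{m+l}$, an analogous diagram over $B:=Quot_n^\circ\times Flag_{m,l}^\circ$ with vertical maps $\psi_{n,l},\psi_{n,m+l},\psi_{n,m}$.

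Now apply \cref{lem:kass} to these two diagrams. By \cref{eq:2.4,correct:3,eq:correct4} the vanishing locus of $\psi_{n+m,l}$ over $C$ and of $\psi_{n,m+l}$ over $B$ is $Flag_{n,m,l}^\circ$ in both cases, while the vanishing locus of $\psi_{m,l}$ over $C$ and of $\psi_{n,m}$ over $B$ is in both cases the fibre product $Z:=B\times_{Quot_{n,m,l}^\circ}C$ (the scheme of a point of $Quot_n^\circ$ together with compatible two–step flags $\mathcal{E}_n\subset\mathcal{E}_{n+m}$ and $\mathcal{E}_m\subset\mathcal{E}_{m+l}$). Thus \cref{lem:kass} furnishes l.c.i. morphisms $\psi_R$ (over $C$) and $\psi_L$ (over $B$) with
$$\psi_{n+m,l}^{!}=\psi_R^{!}\circ\psi_{m,l}^{!}:K(C)\to K(Flag_{n,m,l}^\circ),\qquad \psi_{n,m+l}^{!}=\psi_L^{!}\circ\psi_{n,m}^{!}:K(B)\to K(Flag_{n,m,l}^\circ).$$
Feeding $\psi_{n,m}$ and $\psi_{m,l}$ (over $Quot_{n,m,l}^\circ$) together with the zero sections into the grid of \cref{lem:kcom} shows that the two iterated Gysin maps $K(Quot_{n,m,l}^\circ)\to K(B)\to K(Z)$ and $K(Quot_{n,m,l}^\circ)\to K(C)\to K(Z)$ agree. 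Combining these three facts, both $\psi_{n,m+l}^{!}\psi_{m,l}^{!}$ and $\psi_{n+m,l}^{!}\psi_{n,m}^{!}$ equal, respectively, $\psi_L^{!}$ and $\psi_R^{!}$ precomposed with one and the same map $K(Quot_{n,m,l}^\circ)\to K(Z)$. Hence the proposition reduces to the identity $\psi_L^{!}=\psi_R^{!}:K(Z)\to K(Flag_{n,m,l}^\circ)$.

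It remains to identify $\psi_L$ and $\psi_R$. Over $Z$ both are morphisms out of affine bundles modelled on $W_{n,l}|_Z$, both are affine over the one linear map $\psi_{n,l}|_Z:W_{n,l}|_Z\to V_{n,l}|_Z$, and both have $Flag_{n,m,l}^\circ$ as vanishing locus; in other words $Flag_{n,m,l}^\circ\to Z$ is presented in two ways as the zero scheme of a twisted form of $\psi_{n,l}|_Z$. One concludes $\psi_L^{!}=\psi_R^{!}$ by the same mechanism as in the proof of \cref{prop:ind}: replace the two presentations by their direct sum and invoke \cref{lem:flat}, using also the fact (extracted from the proof of \cref{lem:kass}) that the affine-bundle factor contributes only a flat pullback, which is an isomorphism on $K$-theory. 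This last step — reconciling the two genuinely different l.c.i. morphisms $\psi_L$ and $\psi_R$, which live inside the different bundles $W_{n,m+l}$ and $W_{n+m,l}$ — is the part I expect to be the main obstacle; the careful identification of $Z$ with the fibre product $B\times_{Quot_{n,m,l}^\circ}C$, needed in order to apply \cref{lem:kcom}, also requires some bookkeeping.
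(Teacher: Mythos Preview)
Your overall strategy coincides with the paper's: choose compatible resolutions, apply \cref{lem:kass} on each side to factor $\psi_{n,m+l}^{!}$ and $\psi_{n+m,l}^{!}$ through an intermediate ``outer'' Gysin map landing on what you call $Z$ (the paper's $Corr_{n,m,l}^{\circ}$), use \cref{lem:kcom} to commute the two ``inner'' maps $\psi_{n,m}^{!}$ and $\psi_{m,l}^{!}$, and reduce to $\psi_L^{!}=\psi_R^{!}$. Up to here you are exactly on the paper's track (Steps~1--4).

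The gap is precisely where you flag it. Your proposed endgame --- ``replace the two presentations by their direct sum and invoke \cref{lem:flat}'' --- does not transplant from \cref{prop:ind}. In \cref{prop:ind} one compares two \emph{linear} presentations coming from two resolutions of the \emph{same} sheaf $\mathcal{I}_m$; the direct sum $\mathcal{W}_m\oplus\mathcal{W}_m'$ again resolves $\mathcal{I}_m$, and the comparison maps $V_{n,m}''\to V_{n,m}$ are smooth surjections of vector bundles, so \cref{lem:flat} applies. Here $\psi_L$ and $\psi_R$ are \emph{affine} (not linear) morphisms out of two genuinely different affine bundles $Y_1\subset W_{n,m+l}$ and $Y_2\subset W_{n+m,l}$; there is no common resolution they both arise from, no natural ``direct sum'' of affine bundles, and no smooth surjection between the two targets that would let \cref{lem:flat} finish the job.

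What the paper does instead (Steps~5--6) is produce an explicit isomorphism $Y_1\cong Y_2$ over $Z$ intertwining $\psi_L$ and $\psi_R$. Both $Y_1$ and $Y_2$ are affine subbundles of $W_{n+m,m+l}|_Z$ cut out by conditions of the form $(g,h)(x)=(\overline{t_{n,m}},0)$ and $(g,h)(x)=(0,\overline{t_{m,l}})$ respectively; the paper exhibits a concrete section $\beta\in W_{n+m,m+l}|_Z$ (built from the splitting $l_1$ of the flag $k^n\subset k^{n+m}$) with $g\beta=\overline{t_{n,m}}$ and $h\beta=-\overline{t_{m,l}}$, so that $x\mapsto x-\beta$ carries $Y_1$ isomorphically onto $Y_2$ compatibly with the maps to $V_{n,l}\times_{Quot_{n,m,l}^{\circ}}Z$. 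This translation is the missing idea; once you have it, $\psi_L^{!}=\psi_R^{!}$ is immediate.
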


\begin{proof}
  \begin{description}
  \item[Step 1]   Given a sufficient large $r$, let
  \begin{flalign*}
    \mathcal{W}_{l}=\pi^{*}\pi_{*}(\mathcal{I}_{l}(r))\otimes \mathcal{O}(-r), \\
    \mathcal{W}_{m}=\pi^{*}\pi_{*}(\mathcal{I}_{m}(r))\otimes \mathcal{O}(-r), \\
    \mathcal{W}_{n}=\pi^{*}\pi_{*}(\mathcal{I}_{n}(r))\otimes \mathcal{O}(-r), \\
    \mathcal{W}_{m+n}=\pi^{*}\pi_{*}(\mathcal{I}_{n+m}(r))\otimes \mathcal{O}(-r), \\
    \mathcal{W}_{m+l}=\pi^{*}\pi_{*}(\mathcal{I}_{m+l}(r))\otimes \mathcal{O}(-r)
  \end{flalign*}
  for all $X\times S$ where $X$ are all the schemes in diagram
  \eqref{eq:correct3} and \eqref{eq:correct4} where those coherent
  sheaves are well defined, similar to the definition in lemma
  \ref{lem:res}. Then these coherent sheaves have surjective maps to
  $\mathcal{I}_{l},\mathcal{I}_{m},\mathcal{I}_{n},\mathcal{I}_{m+n},\mathcal{I}_{n+l}$ respectively, and let
  $\mathcal{V}_{l},\mathcal{V}_{m},\mathcal{V}_{n},\mathcal{V}_{m+n},\mathcal{V}_{n+l}$ to be their kernels
  respectively.

  Then we have the exact sequence:
  \begin{equation*}
    \begin{tikzcd}
      0 \ar{r} & \mathcal{V}_{n} \ar{r} \ar{d} & \mathcal{V}_{m+n} \ar{r} \ar{d} & \mathcal{V}_{m} \ar{d} \ar{r} & 0\\
      0 \ar{r} & \mathcal{W}_{n} \ar{r} & \mathcal{W}_{m+n} \ar{r} & \mathcal{W}_{m} \ar{r} & 0
    \end{tikzcd}
  \end{equation*}
  over $Flag_{n,m}^{\circ }\times Quot_{l}^{\circ }\times S$ and
  another exact sequence
  \begin{equation*}
    \begin{tikzcd}
      0 \ar{r} & \mathcal{V}_{m} \ar{r} \ar{d} & \mathcal{V}_{m+l} \ar{r} \ar{d} & \mathcal{V}_{l} \ar{d} \ar{r} & 0\\
      0 \ar{r} & \mathcal{W}_{m} \ar{r} & \mathcal{W}_{m+l} \ar{r} & \mathcal{W}_{l} \ar{r} & 0
    \end{tikzcd}
  \end{equation*}
  over $Quot_{n}^{\circ }\times Flag_{m,l}^{\circ }\times S$. Similar to Section
  \ref{sec:res}, there are locally free sheaves
  $V_{n,l},V_{m,l},V_{n,m},W_{n,l},W_{m,l},W_{n,m}$ over
  $Quot_{n,m,l}^{\circ }$, $V_{n+m,l},W_{n+m,l}$ over
  $Flag_{n,m}^{\circ }\times Quot_{l}^{\circ }$ and
  $V_{n,m+l},W_{n,m+l}$ over $Quot_{n}^{\circ }\times Flag_{m,l}^{\circ }$.
  \item[Step 2] Let $Corr_{n,m,l}^{\circ}$ be defined by the following Cartesian
  diagram:
  \begin{equation*}
    \begin{tikzcd}
      Corr_{n,m,l}^{\circ }\ar{r} \ar{d} & Flag_{n,m}^{\circ }\times Quot_{l}^{\circ } \ar{d}{p_{n,m}} \\
      Quot_{n}^{\circ }\times Flag_{m,l}^{\circ } \ar{r}{p_{m,l}} & Quot_{n,m,l}^{\circ }    
    \end{tikzcd}
  \end{equation*}
  Then the locally free sheaves $W_{n+m,m+l}$ and $V_{n+m,m+l}$ are
  also well defined on $Corr_{n,m,l}^{\circ}$. Let $\psi_{n,m}$ denote
  the morphism from $Corr_{n,m,l}^{\circ}$ to $Quot_{n}^{\circ}\times
  Flag_{m,l}^{\circ}$ and $\psi_{m,l}$ denote the morphism from
  $Corr_{n,m,l}^{\circ}$ to $Flag_{n,m}^{\circ}\times Quot_{l}^{\circ}$ by abusing the notation. By Lemma \ref{lem:kcom}, we have
  \begin{equation}
  \label{eq:2.11}
  \psi_{n,m}^{!}\psi_{m,l}^{!}=\psi_{m,l}^{!}\psi_{n,m}^{!}:K(Quot_{n,m,l}^{\circ })\to K(Corr_{n,m,l}^{\circ }).  
\end{equation}
\item[Step 3] Consider the following exact sequences of locally free sheaves over
$Quot_{n}^{\circ }\times Flag_{m,l}^{\circ }$:

\begin{equation*}
  \begin{tikzcd}
    0\ar{r} & W_{n,l} \ar{r} \ar{d} & W_{n,l+m} \ar{r}{f} \ar{d}{\psi_{n,l+m}} & W_{n,m} \ar{d}{\psi_{n,m}} \ar{r} & 0 \\
    0\ar{r} & V_{n,l} \ar{r}  & V_{n,l+m} \ar{r} & V_{n,m} \ar{r} & 0
  \end{tikzcd}
\end{equation*}
where $Corr_{n,m,l}^{\circ }$ is the pre-image of the zero section of
$\psi_{n,l+m}$ and $Flag_{n,m,l}^{\circ }$ is the pre-image of the
zero section of $\psi_{n,l+m}^{-1}$. Let $Y_{1}=f^{-1}(Corr_{n,m,l}^{\circ })$. Then by \cref{di:final1}, there
is a Cartesian diagram:

\begin{equation*}
  \begin{tikzcd}
    Flag_{n,m,l}^{\circ }\ar{r}\ar{d} & Y_{1} \ar{d}{\psi_{1}} \\
    Corr_{n,m,l}^{\circ }\ar{r} & V_{n,l}\times _{Quot_{n,m,l}^{\circ }}Corr_{n,m,l}^{\circ }.
  \end{tikzcd}
\end{equation*}
By Lemma \ref{lem:kass},
\begin{equation}
  \label{eq:correct5}
  \psi_{n,l+m}^{!}\circ \psi_{m,l}^{!}=\psi_{1}^{!}\circ \psi_{m,l}^{!}\circ \psi_{n,m}^{!}.
\end{equation}

Similarly, there is another commutative diagram of exact sequences of locally free sheaves over
$Flag_{n,m}^{\circ }\times Quot_{l}^{\circ }$:
\begin{equation*}
  \begin{tikzcd}
    0\ar{r} & W_{n,l} \ar{r} \ar{d}{} & W_{n+m,l} \ar{r}{f'} \ar{d}{\psi_{n+m,l}} & W_{m,l} \ar{d}{\psi_{m,l}} \ar{r} & 0 \\
    0\ar{r} & V_{n,l} \ar{r}  & V_{n+m,l} \ar{r} & V_{m,l} \ar{r} & 0
  \end{tikzcd}
\end{equation*}
where $Corr_{n,m,l}^{\circ }$ is the pre-image of the zero section of
$\psi_{n+m,l}$ and $Flag_{n,m,l}^{\circ }$ is the pre-image of the
zero section of $\psi_{n+m,l}^{-1}(0)$. Let
$Y_{2}=f'^{-1}(Corr_{n,m,l}^{\circ })$. There is the following Cartesian
diagram:
\begin{equation*}
  \begin{tikzcd}
    Flag_{n,m,l}^{\circ }\ar{r}\ar{d} & Y_{2} \ar{d}{\psi_{2}} \\
    Corr_{n,m,l}^{\circ }\ar{r} & V_{n,l}\times _{Quot_{n,m,l}^{\circ }}Corr_{n,m,l}^{\circ }.
  \end{tikzcd}
\end{equation*}
Still by Lemma \ref{lem:kass}
\begin{equation}
  \label{eq:correct6}
  \psi_{n+m,l}^{!}\circ \psi_{n,m}^{!}=\psi_{2}^{!}\circ \psi_{n,m}^{!}\circ \psi_{m,l}^{!}.
\end{equation}
\item[Step 4] By equation \eqref{eq:2.11}, \eqref{eq:correct5} and
  \eqref{eq:correct6}, we only need to prove that $Y_{1}$ and $Y_{2}$ are isomorphic, and
  $\psi_{1}=\psi_{2}$.

  Let $W_{n,m}\times_{Quot_{n,m,l}^{\circ}}Corr_{n,m,l}^{\circ}$
  denoted still by $W_{n,m}$ by abusing the notation for the pullback
  of vector bundles. Similarly we abuse the notation for
  $W_{n,l},W_{m,l},W_{n,l+m}$ and $W_{n+m,l}$. $\psi_{n,m},
  \psi_{m,l}$ induced global sections:
  \begin{align*}
   t_{n,m}:Corr_{n,m,l}^{\circ }\to W_{n,m},\quad  & t_{m,l}:Corr_{n,m,l}^{\circ }\to W_{m,l}
  \end{align*}
  $Y_{1},Y_{2}$ can be represented by the following Cartesian
  diagrams:
  
  \begin{minipage}{0.5\linewidth}
    \begin{equation}
    \label{tikz:correct1}
    \begin{tikzcd}
      Y_{1} \ar{r} \ar{d} & Corr_{n,m,l}^{\circ} \ar{d}{t_{n,m}} \\
      W_{n,l+m} \ar{r} & W_{n,m}
    \end{tikzcd}
  \end{equation}
  \end{minipage}
  \begin{minipage}{0.5\linewidth}
    \begin{equation}
      \label{tikz:correct2}
      \begin{tikzcd}
      Y_{2} \ar{r} \ar{d} & Corr_{n,m,l}^{\circ} \ar{d}{t_{m,l}} \\
      W_{n+m,l} \ar{r} & W_{m,l}
    \end{tikzcd}
  \end{equation}
  \end{minipage}
\item[Step 5] Recall that over $Corr_{n,m,l}^{\circ }\times S$, we have
    following diagrams:
    \begin{equation}
      \label{di:final2}
  \begin{tikzcd}
    0\ar{d} & 0\ar{d} & 0 \ar{d} & 0 \ar{d} & 0\ar{d}\\
  \mathcal{V}_{n}\ar{d} \ar[r,dotted] & \mathcal{V}_{n+m}\ar[r,dotted] \ar{d} & \mathcal{V}_{m} \ar{d}  \ar[r,dashed] &\mathcal{V}_{n+m} \ar[r,dashed] \ar[d] & \mathcal{V}_{l} \ar[d]\\
  \mathcal{W}_{n} \ar[d,"w_{n}"] \ar[r,dotted,"g_{n}"] & \mathcal{W}_{n+m}\ar[r,dotted,"g_{n+m}"] \ar[d,"w_{n+m}"] & \mathcal{W}_{m} \ar{d}{w_{m}}  \ar[r,dashed,"g_{m}"]& \mathcal{W}_{m+l} \ar[d,"w_{m+l}"] \ar[r,dashed,"g_{m+l}"] & \mathcal{W}_{l} \ar[d,"w_{l}"]\\
    k^{n}\otimes \mathcal{O} \ar{d}{\phi_{n}} \ar[r,dotted] & k^{n+m}\otimes \mathcal{O} \ar{d}{\phi_{n+m}} \ar[r,shift right, swap,dotted]  & k^{m}\otimes \mathcal{O} \ar{d}{\phi_{m}} \ar[l, shift right, swap, "l_{1}"] \ar[r,dashed] & k^{m+l}\otimes \mathcal{O} \ar[r,shift right, swap,dashed] \ar[d,"\phi_{m+l}"] & k^{l} \otimes \mathcal{O} \ar[l, shift right, swap, "l_{2}"] \ar[d,"\phi_{l}"]  \\
    \mathcal{E}_{n} \ar[r,dotted,"h_{n}"] \ar{d} & \mathcal{E}_{n+m} \ar[r,dotted,"h_{n+m}"] \ar{d} & \mathcal{E}_{m} \ar{d} \ar[r,dashed,"h_{m}"] & \mathcal{E}_{m+l} \ar[r,dashed,"h_{m+l}"] \ar{d} & \mathcal{E}_{l} \ar{d} \\
    0 & 0 & 0 & 0 & 0
  \end{tikzcd}
\end{equation}
  
where all the columns, dashed rows and dotted rows are
exact. Let $g=g_{m} \circ g_{n+m}$ and $h=h_{m}\circ h_{n+m}$, then
$$g:W_{m+l,n+m}\to W_{n+m,n+m},\quad h:W_{n+m,m+l}\to W_{m+l,m+l}$$
which are induced by the composition has kernel $W_{n,m+l}$ and
$W_{n+m,l}$ respectively. Let $\overline{t_{n,m}}=h_{n}\circ t_{n,m} \circ
g_{n+m}$ and $\overline{t_{m,l}}=h_{m}\circ t_{m,l} \circ
g_{m+l}$. A closed point $x\in Y_{1}$ corresponds to a morphism
$$x\in\operatorname{Hom}(\mathcal{W}_{m+l},\mathcal{E}_{n})$$
with the condition $ x\circ g_{m}=t_{n,m}$, which is equivalent to $x\circ
g=\overline{t_{n,m}}$.  A closed point $x\in Y_{2}$ corresponds to a
morphism $$x\in\operatorname{Hom}(\mathcal{W}_{l},\mathcal{E}_{n+m})$$
with the condition $ h_{n+m}\circ x=t_{m,l}$, which is equivalent to
$h\circ x=\overline{t_{m,l}}$.

By diagram \eqref{tikz:correct1} and \eqref{tikz:correct2},  $Y_{1}$
and $Y_{2}$ can be represented by the following
Cartesian diagrams:

\begin{equation*}
  \begin{tikzcd}
    Y_{1} \ar{r} \ar{d} & Corr_{n,m,l}^{\circ} \ar{d}{(\overline{t_{n,m}},0)} \\
    W_{n+m,m+l} \ar{r}{(g,h)} & W_{n+m,n+m}\times W_{m+l,m+l} 
  \end{tikzcd}
\end{equation*}

\begin{equation*}
  \begin{tikzcd}
    Y_{2} \ar{r} \ar{d} & Corr_{n,m,l}^{\circ} \ar{d}{(0,\overline{t_{m,l}})} \\
    W_{n+m,m+l} \ar{r}{(g,h)} & W_{n+m,n+m}\times W_{m+l,m+l} 
  \end{tikzcd}
\end{equation*}
\item[Step 6] Let $s:k^{m+l}\otimes \mathcal{O}\to k^{m}\otimes \mathcal{O}$
  defined by $s(x,y)=x$, and let $\beta=\phi_{n+m}\circ l_{1} \circ s
  \circ w_{m+l}$, where $l_{1}$ and $w_{m+l}$ were defined in \cref{di:final2}. Then $\beta \circ g=\overline{t_{n,m}}$ and $h \circ
  \beta=-\overline{t_{m,l}}$. So given $x\in W_{n+m,m+l}$ such that
  $(g,h)(x)=(\overline{t_{n,m}},0)$, we have
  $(g,h)(x-\beta)=(0,\overline{t_{m,l}})$. Hence we
construct an isomorphism between $Y_{1}$ and $Y_{2}$, and proved
$\psi_{1}^{!}=\psi_{2}^{!}$.
\end{description}

\end{proof}

\subsection{Group Actions on Quot Schemes and Flag Schemes}
In this subsection we discuss the group actions on Quot schemes and
Flag schemes. First we introduce the following notations:
\begin{enumerate}
\item Let $G_{d}=GL_{d}(k), G_{d_{\bullet}}=\prod_{i=1}^{k}G_{d_{i}-d_{i-1}}.$
$G_{d}$ has a natural action on $Quot_{d}^{\circ}$ by acting on $\mathcal{O}^{n}$ and $G_{d_{\bullet}}$ has a natural action on $Quot_{d_{\bullet}}^{\circ
}$.
\item Let $T_{d}$ be the maximal torus of $G_{d}$ formed by the
  diagonal matrices. Let $\sigma_{d}$ be the permutation group of $n$
  elements, which is the Weyl group of $G_{d}$. Let
  $T_{d_{\bullet}}=\prod_{i=1}^{k}T_{d_{i}-d_{i-1}}$ and
  $\sigma_{d_{\bullet}}=\prod_{i=1}^{k}\sigma_{d_{i}-d_{i-1}}$.
\item Let $P_{d_{\bullet}}$ be the parabolic group of $G_{d}$ which
  preserves the flag $F$. $G_{d_{\bullet}}$ is the Levi subgroup of
  $P_{d_{\bullet}}$. $P_{d_{\bullet}}$ has a natural action on
  $Flag_{d_{\bullet}}^{\circ }$.
\item $p_{d_{\bullet}}$ defined in \cref{eq:def2} and
  $i_{d_{\bullet}}$ defined in \cref{eq:def1} are $P_{d_{\bullet}}$-equivariant. 
  Let $\widetilde{Flag_{d_{\bullet}}^{\circ }}=Flag_{d_{\bullet}}^{\circ
  }\times_{P_{d_{\bullet}}}G_{d}$. $i_{d_{\bullet}}$ induces a proper
  $G_{d}$-equivariant
  morphism $$q_{d_{\bullet}}:\widetilde{Flag_{d_{\bullet}}^{\circ
    }}\to Quot_{d}^{\circ}.$$
\item We will use the notation $p_{n,m}$ for $d_{\bullet}=(0,n,n+m)$
  and $p_{n,m,l}=p_{d_{\bullet}}$ for
  $d_{\bullet}=(0,n,n+m,n+m+l)$. The same principle holds for other
  notations, like $q_{d_{\bullet}}$,$G_{d_{\bullet}}$,$T_{d_{\bullet}}$ and so on.
\end{enumerate}

\begin{remark}
  All the refined Gysin map in the previous sections can be defined
equivariantly by the same constructions, and \cref{prop:ind}
and \cref{prop:ass} also have equivariant version.
\end{remark}

Moreover, there is a $P_{n,m}$ action on the vector bundle $W_{n,m}$
over $Quot_{n,m}^{\circ}$ by the following method. \cref{eq:2.1}
induces a $G_{m,n}$-equivariant homomorphism from $Hom(\mathcal{O}^{m},\mathcal{O}^{n})$ to
$Hom(\mathcal{W}_{m},\mathcal{E}_{n})$. 

 Let $U_{n,m}$ be the kernel of the projection map from $P_{n,m}$
 to $G_{n,m}$, then $U_{n,m}$ is isomorphic to
 $Hom(\mathcal{O}^{m},\mathcal{O}^{n})$. Then
 $i(x)(y)=\gamma_{m,n}(x)+y$ defines a group action $U_{n,m}$ of
 on $W_{n,m}$. Moreover, for any $g\in G_{n,m},x\in U_{n,m},u\in W_{n,m}$,
 $(g^{-1}i(x)g)(u)=g^{-1}(g(u)+\gamma_{m,n}(x))=u+\gamma_{m,n}(x)=i(x)(u)$
 by the $G_{m,n}$ equivariance of the morphism $\gamma_{m,n}$. Then
 by the following lemma, $i$ can be extended to a $P_{n,m}$ action on
 $W_{n,m}$.

 \begin{lemma}
  Let
  $$1\to \Gamma \to P \xrightarrow{proj} G\to 1$$
  be an exact sequence of algebraic groups and let
  $$i:G\subset P$$
  a closed immersion such that $proj\circ i=id$.
  Let $a$ be a group action of $\Gamma$ on $X$, $b$ be a group action of $G$ on $X$
  such that
  $$a(g)^{-1}b(u)a(g)=b(g^{-1}ug)$$ for all $g\in G,u\in \Gamma$.
  Then for any $x\in P$, there exists a unique decomposition
  $x=gu,g\in G,u\in \Gamma$, and let
  $c(x)=b(u)a(g)$. Then $c$ is a group action of $P$ on $X$.
\end{lemma}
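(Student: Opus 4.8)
The plan is to reduce the statement to the (algebraic) decomposition $P = i(G)\cdot\Gamma$, and then verify the homomorphism property of $c$ by a direct computation.

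\textbf{Step 1: the decomposition.} Since $\mathrm{proj}\circ i = \mathrm{id}_G$, the closed immersion $i$ identifies $G$ with a subgroup $i(G)\subseteq P$ on which $\mathrm{proj}$ restricts to an isomorphism; in particular $i(G)\cap\Gamma = \{e\}$. For $x\in P$ set $g := i(\mathrm{proj}(x))$ and $u := g^{-1}x$; then $\mathrm{proj}(u) = \mathrm{proj}(g)^{-1}\mathrm{proj}(x) = e$, so $u\in\Gamma$ and $x = gu$. If $g_1u_1 = g_2u_2$ with $g_j\in i(G)$ and $u_j\in\Gamma$, then $g_2^{-1}g_1 = u_2u_1^{-1}\in i(G)\cap\Gamma = \{e\}$, which gives uniqueness. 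Moreover the multiplication morphism $i(G)\times\Gamma\to P$, $(g,u)\mapsto gu$, is a bijective morphism of varieties with inverse $x\mapsto\bigl(i(\mathrm{proj}(x)),\, i(\mathrm{proj}(x))^{-1}x\bigr)$, hence an isomorphism of varieties. I record this so that at the end $c$ will automatically be a morphism once it is known to be a set-theoretic action.

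\textbf{Step 2: $c$ is a homomorphism.} Here $c$ is defined by $c(x) = b(g)\,a(u)$ for the unique decomposition $x = i(g)u$ of Step~1 (we work with the compatibility hypothesis in the form $b(g)^{-1}a(u)b(g) = a(i(g)^{-1}u\,i(g))$, i.e. the $G$-action conjugates the $\Gamma$-action compatibly with the conjugation of $G$ on the normal subgroup $\Gamma\trianglelefteq P$; this is the form that occurs in the application above). Then $c(e) = b(e)a(e) = \mathrm{id}_X$. For $x_1 = i(g_1)u_1$ and $x_2 = i(g_2)u_2$,
\[
x_1x_2 \;=\; i(g_1g_2)\cdot\bigl(i(g_2)^{-1}u_1\,i(g_2)\bigr)\cdot u_2 ,
\]
and $i(g_2)^{-1}u_1 i(g_2)\in\Gamma$ because $\Gamma$ is normal in $P$, so the right-hand side is already the $i(G)\cdot\Gamma$ decomposition of $x_1x_2$. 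Hence
\begin{align*}
c(x_1x_2) &= b(g_1g_2)\,a\bigl(i(g_2)^{-1}u_1\,i(g_2)\bigr)\,a(u_2) \\
&= b(g_1)\Bigl(b(g_2)\,a\bigl(i(g_2)^{-1}u_1\,i(g_2)\bigr)\Bigr)a(u_2) = b(g_1)\,a(u_1)\,b(g_2)\,a(u_2) = c(x_1)\,c(x_2),
\end{align*}
where the second equality of the last line uses the compatibility hypothesis in the form $b(g_2)\,a\bigl(i(g_2)^{-1}u_1\,i(g_2)\bigr) = a(u_1)\,b(g_2)$.

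\textbf{Step 3: $c$ is an algebraic action, and the main point.} Since $c(x) = b(g)\circ a(u)$ for $x = i(g)u$, the action map factors as
\[
P\times X \;\xrightarrow{\ \sim\ }\; i(G)\times\Gamma\times X \;\xrightarrow{\ \mathrm{id}\times a\ }\; i(G)\times X \;\xrightarrow{\ b\ }\; X ,
\]
a composite of morphisms by Step~1, so $c$ is a $P$-action on $X$. I expect the only delicate point to be applying the conjugation-compatibility in the correct direction and observing that normality of $\Gamma$ is exactly what keeps $i(g_2)^{-1}u_1 i(g_2)$ inside $\Gamma$; everything else is formal bookkeeping. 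In the intended application $\Gamma = U_{n,m}$, $G = G_{n,m}$, $a$ is the translation action $a(u)(y) = \gamma_{m,n}(u)+y$ on $W_{n,m}$, and $b$ is the linear $G_{m,n}$-action, and the required identity $b(g)^{-1}a(u)b(g) = a(i(g)^{-1}u\,i(g))$ is precisely the $G_{m,n}$-equivariance of $\gamma_{m,n}$ noted just before the lemma.
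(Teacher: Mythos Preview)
The paper does not actually supply a proof of this lemma; it is stated and immediately used, so there is nothing to compare against. Your argument is correct and complete: the unique factorisation $P=i(G)\cdot\Gamma$ is established cleanly, the homomorphism check is the standard semidirect-product computation, and you take care to note that the action map is a morphism of varieties.

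One comment worth recording: you rightly observe that the statement as printed has its letters tangled (with $a$ declared as the $\Gamma$-action and $b$ as the $G$-action, the expressions $a(g)$, $b(u)$ and $c(x)=b(u)a(g)$ do not type-check), and you work instead with the version $c(x)=b(g)a(u)$ together with the compatibility $b(g)^{-1}a(u)b(g)=a\bigl(i(g)^{-1}u\,i(g)\bigr)$. This is indeed the form that matches the intended application just above the lemma, where $\Gamma=U_{n,m}$ acts by translation and $G=G_{n,m}$ acts linearly on $W_{n,m}$, and the required identity is exactly the $G_{n,m}$-equivariance of $\gamma_{m,n}$. So your reinterpretation is the correct reading.
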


\subsection{Torus actions on Quot Schemes}
Let $T_{d}\subset G_{d}$ be the maximal torus consisting of diagonal
  matrices.
  \begin{lemma}Let $(Quot_{d}^{\circ})^{T_{d}}$ be the fixed locus of
    $Quot_{d}^{\circ}$ with $T_{d}$ action.
  \label{lem:2.4}
  \begin{enumerate}
  \item $(Quot_{d}^{\circ })^{T_{d}}=(Quot_{1}^{\circ })^{d}=S^{d}$.
  \item   Let $pr_{ij}:S^{d+1}\to S\times S$ be the projection to $i$-th and $j$-th
    factors. Let $\mathcal{E}_{d}$ and $\mathcal{I}_{d}$ be the
    universal sheaf and kernel sheaf over  $(Quot_{1}^{\circ })^{d}\times S=S^{d+1}$. Then
  $$\mathcal{E}_{d}=\bigoplus_{i=1}^{d}pr_{i,d+1}^{*}(\mathcal{O}_{\Delta}),\quad \mathcal{I}_{d}=\bigoplus_{i=1}^{d}pr_{i,d+1}^{*}(\mathcal{I}_{\Delta}),$$
  where $\Delta:S\to S\times S$ is the diagonal map, $\mathcal{I}_{\Delta}$ is the ideal sheaf of
  $\Delta$ and $\mathcal{O}_{\Delta}$ is the structure sheaf of diagonal.
  \end{enumerate}

\end{lemma}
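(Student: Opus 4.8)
The plan is to describe the $T_d$-fixed points of $Quot_d^\circ$ explicitly, then identify the universal sheaves. First I would recall that a closed point of $Quot_d^\circ$ is a quotient $\phi:k^d\otimes\mathcal{O}_S\twoheadrightarrow\mathcal{E}_d$ with $\mathcal{E}_d$ of length $d$ and $H^0(\phi):k^d\xrightarrow{\ \sim\ }H^0(\mathcal{E}_d)$. A point is $T_d$-fixed precisely when for every $t\in T_d$ there is an isomorphism $g_t:\mathcal{E}_d\to\mathcal{E}_d$ with $g_t\circ\phi=\phi\circ(t\otimes\mathrm{id})$; such $g_t$ is unique because $H^0(\phi)$ is an isomorphism, so $t\mapsto g_t$ is a homomorphism $T_d\to\mathrm{Aut}(\mathcal{E}_d)$ and hence $\mathcal{E}_d$ acquires a $T_d$-equivariant structure compatible with the standard $T_d$-action on $k^d\otimes\mathcal{O}_S$. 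Decomposing $k^d=\bigoplus_{i=1}^d k_{\chi_i}$ into the weight lines for the characters $\chi_1,\dots,\chi_d$ of $T_d$, the equivariant structure splits $\mathcal{E}_d=\bigoplus_{\text{weights }\chi}\mathcal{E}_d^{(\chi)}$, and $\phi$ is the direct sum of its weight components. The component in weight $\chi_i$ is a quotient $\mathcal{O}_S=k_{\chi_i}\otimes\mathcal{O}_S\twoheadrightarrow\mathcal{E}_d^{(\chi_i)}$; since $H^0(\phi)$ restricted to $k_{\chi_i}$ must be an isomorphism onto $H^0(\mathcal{E}_d^{(\chi_i)})$ and the total length is $d$, each $\mathcal{E}_d^{(\chi_i)}$ has length exactly $1$ (no room for weights outside $\{\chi_1,\dots,\chi_d\}$, and length $\ge 1$ in each by injectivity of $H^0(\phi)$ on $k_{\chi_i}$). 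A length-$1$ quotient $\mathcal{O}_S\twoheadrightarrow k(s)$ is exactly a point $s\in S$, i.e.\ a point of $Quot_1^\circ=S$. Thus on closed points the fixed locus is $S^d$, and I would promote this to a scheme isomorphism by the same weight-decomposition argument applied to families over an arbitrary test scheme, using that the $T_d$-equivariant structure on a flat family is fiberwise the one just described and that the weight-eigensheaves are flat of rank $1$.

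Next, for the identification of the universal sheaves: over $(Quot_1^\circ)^d\times S=S^{d+1}$, the $i$-th factor of $S^d$ parametrizes the weight-$\chi_i$ summand, and by definition of the universal family on $Quot_1^\circ=S$ (namely $\mathcal{E}_1=\mathcal{O}_\Delta$, as in the Example following Lemma \ref{lem:res}) the pullback along $pr_{i,d+1}:S^{d+1}\to S\times S$ of $\mathcal{O}_\Delta$ is precisely $\mathcal{E}_d^{(\chi_i)}$. Summing over $i$ gives $\mathcal{E}_d=\bigoplus_{i=1}^d pr_{i,d+1}^*(\mathcal{O}_\Delta)$. For the kernel sheaf, applying the snake lemma to the weight decomposition of $0\to\mathcal{I}_d\to k^d\otimes\mathcal{O}\to\mathcal{E}_d\to 0$ shows $\mathcal{I}_d=\bigoplus_i\mathcal{I}_d^{(\chi_i)}$ with $\mathcal{I}_d^{(\chi_i)}=\ker(\mathcal{O}_S\to\mathcal{E}_d^{(\chi_i)})$, which pulls back from $\mathcal{I}_\Delta$ in the same way, giving $\mathcal{I}_d=\bigoplus_{i=1}^d pr_{i,d+1}^*(\mathcal{I}_\Delta)$.

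I expect the main obstacle to be the scheme-theoretic (as opposed to set-theoretic) identification of the fixed locus: one must check that the functor of $T_d$-fixed points — whose value on a test scheme $X$ is families over $X\times S$ equipped with a $T_d$-equivariant structure lifting the standard one on $k^d\otimes\mathcal{O}$ — is represented by $S^d$, which requires knowing that forming weight-eigenspaces is compatible with base change and preserves flatness. This is where I would lean on the hypothesis that $k$ has characteristic $0$ (so $T_d$ is linearly reductive and weight decompositions are exact and base-change compatible) and on the cited result from Minets \cite{minets18:cohom_hall_higgs} for the analogous Flag-scheme statement. Everything else — the length count, the reduction to $Quot_1^\circ=S$, and the pullback formulas — is then routine.
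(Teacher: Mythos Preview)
Your proposal is correct and gives the standard weight-decomposition argument. The paper itself does not provide an independent proof of this lemma; it simply cites Lemma~3.1 of \cite{minets18:cohom_hall_higgs}, so your write-up is in fact more detailed than what appears in the paper, and the reasoning you sketch is essentially what that cited reference carries out.
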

\begin{proof}
  See Lemma 3.1 of \cite{minets18:cohom_hall_higgs}
\end{proof}
Next we consider the $T_{n,m}$-fixed locus of $Quot_{n,m}^{\circ }$ for
two non-negative integers $n$ and $m$.

\begin{description}[align=left, leftmargin=0pt, labelindent=\parindent, listparindent=\parindent, labelwidth=0pt, itemindent=!]
\item[Case 1] Let $m=1$ and $n=1$,  $Quot_{1}^{\circ }\times Quot_{1}^{\circ }\times S=S^{3}$ and still let $pr_{ij}:S^{3}\to S\times S$ be the projection to the $i$-th and $j$-th factor. Then we have the following projection lemma:

\begin{lemma}[Projection Lemma]
  \label{lem:proj}
  $$pr_{12*}\mathcal{H}om(pr_{13}^{*}\mathcal{W}_{1},pr_{23}^{*}\mathcal{O}_{\Delta})=\mathcal{W}_{1}^{\vee},\quad pr_{12*}\mathcal{H}om(pr_{13}^{*}\mathcal{V}_{1},pr_{23}^{*}\mathcal{O}_{\Delta})=\mathcal{V}_{1}^{\vee}.$$
\end{lemma}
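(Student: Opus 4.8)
The plan is to recognize $pr_{23}^{*}\mathcal{O}_{\Delta}$ as the structure sheaf of a partial diagonal in $S^{3}$ and thereby reduce the whole computation to a pushforward along an isomorphism. Concretely, I would introduce the closed embedding
\[\delta\colon S^{2}\hookrightarrow S^{3},\qquad (a,b)\mapsto (a,b,b),\]
whose image is the partial diagonal $\{x_{2}=x_{3}\}\subset S^{3}$. Since $Quot_{1}^{\circ}=S$ and $\mathcal{E}_{1}=\mathcal{O}_{\Delta}$, we have $pr_{23}^{*}\mathcal{O}_{\Delta}=\delta_{*}\mathcal{O}_{S^{2}}$, and moreover $pr_{13}\circ\delta=pr_{12}\circ\delta=\mathrm{id}_{S^{2}}$.

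Next, since $pr_{13}^{*}\mathcal{W}_{1}$ is locally free, $\mathcal{H}om(pr_{13}^{*}\mathcal{W}_{1},-)$ is exact, and the standard sheaf-$\mathcal{H}om$ adjunction for the closed immersion $\delta$, namely $\mathcal{H}om_{S^{3}}(\mathcal{F},\delta_{*}\mathcal{G})\cong\delta_{*}\mathcal{H}om_{S^{2}}(\delta^{*}\mathcal{F},\mathcal{G})$, gives
\begin{align*}
\mathcal{H}om\bigl(pr_{13}^{*}\mathcal{W}_{1},\,pr_{23}^{*}\mathcal{O}_{\Delta}\bigr)
&\cong \delta_{*}\,\mathcal{H}om\bigl(\delta^{*}pr_{13}^{*}\mathcal{W}_{1},\,\mathcal{O}_{S^{2}}\bigr)\\
&= \delta_{*}\bigl((pr_{13}\circ\delta)^{*}\mathcal{W}_{1}\bigr)^{\vee}
 = \delta_{*}\mathcal{W}_{1}^{\vee}.
\end{align*}
Applying $pr_{12*}$ and using $pr_{12}\circ\delta=\mathrm{id}_{S^{2}}$ then yields $pr_{12*}\delta_{*}\mathcal{W}_{1}^{\vee}=\mathcal{W}_{1}^{\vee}$, which is the first claimed identity; verbatim the same argument with $\mathcal{V}_{1}$ in place of $\mathcal{W}_{1}$ gives the second. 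As a free byproduct, $R^{i}pr_{12*}$ of these sheaves vanishes for $i>0$, because they are supported on the partial diagonal, on which $pr_{12}$ restricts to an isomorphism.

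The argument is essentially formal, so the only point I expect to require care — and the one I would spell out — is the bookkeeping of the three copies of $S$ inside $S^{3}=Quot_{1}^{\circ}\times Quot_{1}^{\circ}\times S$: one must check that the $S^{2}$ serving as the common source of $pr_{13}\circ\delta$ and $pr_{12}\circ\delta$ is precisely the copy of $Quot_{1,1}^{\circ}=Quot_{1}^{\circ}\times Quot_{1}^{\circ}$ on which $\mathcal{W}_{1}^{\vee}$ is the intended value of $W_{1,1}$, i.e. that $\delta^{*}pr_{13}^{*}\mathcal{W}_{1}$ is literally the resolution sheaf $\mathcal{W}_{1}$ of $\mathcal{I}_{\Delta}$ on $Quot_{1}^{\circ}\times S$, with its surface factor identified with the second $Quot_{1}^{\circ}$. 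This is immediate from $pr_{13}\circ\delta=\mathrm{id}$ once the identifications among the $S$-factors are unwound, and no further geometric input is needed.
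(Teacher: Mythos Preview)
Your proof is correct and follows essentially the same route as the paper: the paper introduces the same partial diagonal embedding (there denoted $\Delta_{23}=id\times\Delta$), observes $pr_{23}^{*}\mathcal{O}_{\Delta}=\mathcal{O}_{\Delta_{23}}$ and $pr_{13}\circ\Delta_{23}=pr_{12}\circ\Delta_{23}=\mathrm{id}$, and then reduces to $pr_{12*}\Delta_{23*}\mathcal{W}_{1}^{\vee}=\mathcal{W}_{1}^{\vee}$. The only cosmetic difference is that the paper first rewrites $\mathcal{H}om(pr_{13}^{*}\mathcal{W}_{1},-)=pr_{13}^{*}\mathcal{W}_{1}^{\vee}\otimes(-)$ and applies the projection formula, whereas you invoke the $\mathcal{H}om$--pushforward adjunction directly; these are equivalent since $\mathcal{W}_{1}$ is locally free.
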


\begin{proof}
  Let
  $$\Delta_{23}=id\times \Delta:S\times S\to S\times S\times S.$$
  Then $pr_{23}^{*}\mathcal{O}_{\Delta}=\mathcal{O}_{\Delta_{23}}$, and
  $pr_{13}\Delta_{23}=pr_{12}\Delta_{12}=id$. Moreover,
  \begin{align*}
    \mathcal{H}om(pr_{13}^{*}\mathcal{W}_{1},pr_{23}^{*}\mathcal{O}_{\Delta}) &=pr_{13}^{*}\mathcal{W}_{1}^{\vee}\otimes pr_{23}^{*}\mathcal{O}_{\Delta}  \\
                                          &=pr_{13}^{*}\mathcal{W}_{1}^{\vee}\otimes \Delta_{23} \\
                                          &=\Delta_{23*}(\Delta_{23}^{*}pr_{13}^{*}\mathcal{W}_{1}^{\vee})\\
    &=\Delta_{23*}\mathcal{W}_{1}^{\vee}.
  \end{align*}
  Hence
  \begin{align*}
    pr_{12*}\mathcal{H}om(pr_{13}^{*}\mathcal{W}_{1},pr_{23}^{*}\mathcal{O}_{\Delta}) &= pr_{12*}\Delta_{23*}\mathcal{W}_{1}^{\vee} \\
    &=\mathcal{W}_{1}^{\vee}
  \end{align*}
And the analogous proof also holds for $\mathcal{V}_{1}$.
\end{proof}
\item[Case 2] 
For the general case,
$$(Quot_{n}^{\circ }\times Quot_{m}^{\circ })^{ T_{n,m}}=(Quot_{1}^{\circ })^{n}\times (Quot_{1}^{\circ })^{m}=S^{n}\times S^{m}.$$
Let $\mathcal{E}_{m},\mathcal{E}_{n}|_{S^{n}\times S^{m}\times S}\in
Coh(S^{n}\times S^{m}\times S)$ be
the restrictions of universal sheaves to $S^{m}\times S^{n}\times S$, then
  $$\mathcal{E}_{m}|_{S^{n}\times S^{m}\times S}=\bigoplus_{i=n+1}^{n+m}pr_{i,n+m+1}^{*}(\mathcal{O}_{\Delta}),\quad \mathcal{E}_{n}|_{S^{n}\times S^{m}\times S}=\bigoplus_{j=1}^{n}pr_{j,n+m+1}^{*}(\mathcal{O}_{\Delta}).$$
    
Taking a sufficiently large $r$, by \cref{lem:res} there is a
surjection map from $\mathcal{W}_{m}=\pi^{*}\pi_{*}(\mathcal{I}_{m}(r))\otimes \mathcal{O}(-r)$ to $\mathcal{I}_{m}$ and let $\mathcal{V}_{m}$ be
its kernel, where $\pi$ is the first projection map form
$Quot_{n,m}^{\circ }\times S$ to $Quot_{n,m}^{\circ }$. Let
$\mathcal{W}=\pi_{1}^{*}\pi_{1*}(\mathcal{I}_{\Delta}(r))\otimes \mathcal{O}(-r)$, where $\pi_{1}$ is the projection from $S\times S$
to $S$, and it also has a surjection to $\mathcal{I}_{\Delta}$. Let
$\mathcal{V}$ denote its kernel. Then 
  $$\mathcal{W}_{m}|_{S^{n}\times S^{m}\times S}=\bigoplus_{i=n+1}^{n+m}pr_{i,n+m+1}^{*}\mathcal{W},\quad \mathcal{V}_{m}|_{S^{n}\times S^{m}\times S}=\bigoplus_{j=1}^{m}pr_{j,n+m+1}^{*}\mathcal{V}$$

By Lemma \ref{lem:proj},
\begin{align*}
  W_{n,m}|_{S^{n}\times S^{m}} &=\bigoplus_{i=1}^{n}\bigoplus_{j=n+1}^{n+m}\frac{z_{i}}{z_{j}}\pi_{*}\mathcal{H}om(pr_{j,n+m+1}^{*}\mathcal{W}_{1},pr_{i,n+m+1}^{*}(\mathcal{O}_{\Delta})) \\
          &= \bigoplus_{i=1}^{n}\bigoplus_{j=n+1}^{n+m}\frac{z_{i}}{z_{j}}pr_{ij}^{*}\mathcal{W}^{\vee}_{1}  \\
 V_{n,m}|_{S^{n}\times S^{m}}   &=\bigoplus_{i=1}^{n}\bigoplus_{j=n+1}^{n+m}\frac{z_{i}}{z_{j}}pr_{ij}^{*}\mathcal{V}_{1}^{\vee}
\end{align*}

Thus
$$W_{n,m}^{\vee}|_{S^{n}\times S^{m}}=\bigoplus_{i=1}^{n}\bigoplus_{j=n+1}^{n+m}\frac{z_{j}}{z_{i}}pr_{ij}^{*}\mathcal{W}_{1},\quad V_{n,m}^{\vee}|_{S^{n}\times S^{m}}=\bigoplus_{i=1}^{n}\bigoplus_{j=n+1}^{n+m}\frac{z_{j}}{z_{i}}pr_{ij}^{*}\mathcal{V}_{1}.$$
We have the following exact sequence
$$ 0\to V_{n,m}^{\vee}\to W_{n,m}^{\vee}\to \bigoplus_{i=1}^{n}\bigoplus_{j=n+1}^{n+m}\frac{z_{j}}{z_{i}}\mathcal{O}\to \bigoplus_{i=1}^{n}\bigoplus_{j=n+1}^{n+m}\frac{z_{j}}{z_{i}}\mathcal{O}_{\Delta ij}\to 0$$
which induces the following equation
\begin{equation}
  \label{eq:2.5}
  \frac{[\wedge^{\bullet}V_{n,m}^{\vee}]}{[\wedge^{\bullet}W_{n,m}^{\vee}]}=\prod_{i=1}^{n}\prod_{j=n+1}^{n+m}\frac{\wedge^{\bullet}[\frac{z_{j}}{z_{i}}\mathcal{O}_{\Delta ij}]}{1-\frac{z_{j}}{z_{i}}}
\end{equation}
\end{description}

\section{K-Theoretic Hall Algebra of a Surface}\label{sec:hall}

Let
$$Coh=\bigsqcup_{d=0}^{\infty }Coh_{d}$$
be the moduli stack of $0$-dimensional coherent sheaves over $S$ and
 $Coh_{d}$ be the moduli stack of dimension $0$, degree $d$ coherent
 sheaves on $S$. In this section, we will construct the K-theoretic
 Hall algebra on $K(Coh)$ and prove that it is associative.

 \subsection{Refined Gysin Map and Multiplication on $K(Coh)$}

Recall the fact that all dimension $0$ coherent sheaves are generated
by their global sections, which induces
$$Coh_{d}=[Quot_{d}^{\circ }/G_{d}]$$

Thus
\begin{equation}
  K(Coh)=\bigoplus_{d=0}^{\infty }K(Coh_{d})=\bigoplus_{d=0}^{\infty }K^{G_{d}}(Quot_{d}^{\circ })
\end{equation}

Let $\widetilde{\psi_{n,m}}$ be the composition of following
morphisms:
$$K^{G_{n,m}}(Quot_{n,m}^{\circ }) \xrightarrow{proj_{n,m}} K^{P_{n,m}}(Quot_{n,m}^{\circ })\xrightarrow{\psi_{n,m}^{!}}K^{P_{n,m}}(Flag_{n,m}^{\circ })$$
where $proj_{n,m}$ is induced by the natural projection from $P_{n,m}$
to $G_{n,m}$. Let $\widetilde{q_{n,m*}}$ be the composition of
following morphisms:
$$K^{P_{n,m}}(Flag_{n,m}^{\circ }) \xrightarrow{ind_{P_{n,m}}^{G_{n,m}}}K^{G_{n+m}}(\widetilde{Flag_{n,m}^{\circ }})\xrightarrow{q_{n,m*}} K^{G_{n+m}}(Quot_{n+m}^{\circ })$$

Now we consider the following diagram:

\begin{equation}
  \label{eq:3.1}
  \begin{tikzcd}
   &   K^{P_{n,m}}(Flag_{n,m}^{\circ }) \ar{rd}{\widetilde{q_{n,m*}}}&   \\
    K^{G_{n,m}}(Quot_{n,m}^{\circ }) \ar{ru}{\psi_{n,m}^{!}}  &  & K^{G_{n+m}}(Quot_{n+m}^{\circ }) \\
\end{tikzcd}
\end{equation}
and let
$$*^{K(Coh)}_{n,m}=\widetilde{q_{n,m*}}\circ \psi_{n,m}^{!}:K^{G_{m}}(Quot^{\circ}_{n})\otimes K^{G_{n}}(Quot^{\circ}_{m})\to K^{G_{n+m}}(Quot^{\circ }_{n+m}),$$
which induces a morphism 
$$*^{KCoh}:K(Coh)\otimes K(Coh)\to K(Coh).$$

\begin{definition}
  \label{def:hall}
  We define $(K(Coh),*^{K(Coh)})$ to be the K-theoretical Hall algebra
  associated to $S$, with unit given by $1 \in K(Coh_0) \cong \mathbb{Z}$.
\end{definition}
\begin{remark}
  The $S=\mathbb{A}^2$ version of this construction was given by Schiffmann and Vasserot in \cite{schiffmann2013} (see also  \cite{schiffmann17:hall} for quivers).
 \end{remark}

In this section, we will prove that $(K(Coh),*^{K(Coh)})$ is associative.
\begin{theorem}\label{thm:ass}
  The K-theoretic Hall algebra $(K(Coh),*^{K(Coh)})$ is associative.
\end{theorem}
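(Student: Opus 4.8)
The plan is to reduce associativity to the ``length‑three'' identity already available from \cref{prop:ass}, by showing that both $(a*^{K(Coh)}b)*^{K(Coh)}c$ and $a*^{K(Coh)}(b*^{K(Coh)}c)$ coincide with a single ternary operation attached to the full flag scheme $Flag_{n,m,l}^{\circ}$. By bilinearity and the grading $K(Coh)=\bigoplus_{d}K^{G_{d}}(Quot_{d}^{\circ})$ it suffices to fix $n,m,l\ge 0$ and classes $a\in K^{G_{n}}(Quot_{n}^{\circ})$, $b\in K^{G_{m}}(Quot_{m}^{\circ})$, $c\in K^{G_{l}}(Quot_{l}^{\circ})$, and to prove
$$(a*^{K(Coh)}b)*^{K(Coh)}c=\widetilde{q_{n,m,l*}}\bigl(\psi_{n+m,l}^{!}\psi_{n,m}^{!}(a\boxtimes b\boxtimes c)\bigr)$$
as well as
$$a*^{K(Coh)}(b*^{K(Coh)}c)=\widetilde{q_{n,m,l*}}\bigl(\psi_{n,m+l}^{!}\psi_{m,l}^{!}(a\boxtimes b\boxtimes c)\bigr),$$
where $a\boxtimes b\boxtimes c\in K^{G_{n,m,l}}(Quot_{n,m,l}^{\circ})$ is the external product, $\widetilde{q_{n,m,l*}}=q_{n,m,l*}\circ \mathrm{ind}_{P_{n,m,l}}^{G_{n+m+l}}$, and the refined Gysin maps are taken $P_{n,m,l}$‑equivariantly, precomposed with the restriction $K^{G_{n,m,l}}\to K^{P_{n,m,l}}$ of \cref{eq:3.1}. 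Granting these two identities, the equivariant form of \cref{prop:ass} gives $\psi_{n+m,l}^{!}\psi_{n,m}^{!}=\psi_{n,m+l}^{!}\psi_{m,l}^{!}$, so the two right‑hand sides agree and associativity follows. The second identity is the mirror image of the first, with the Cartesian square \cref{eq:correct4} replaced by \cref{correct:3}, so I will only discuss the first.

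To establish the first identity I would argue in three steps. First, unwind the definition: $(a*^{K(Coh)}b)*^{K(Coh)}c=\widetilde{q_{n+m,l*}}\bigl(\psi_{n+m,l}^{!}((a*^{K(Coh)}b)\boxtimes c)\bigr)$, and since $a*^{K(Coh)}b=q_{n,m*}\bigl(\mathrm{ind}_{P_{n,m}}^{G_{n+m}}\psi_{n,m}^{!}(a\boxtimes b)\bigr)$ is a proper pushforward, the projection formula for external products gives $(a*^{K(Coh)}b)\boxtimes c=(q_{n,m}\times\mathrm{id})_{*}\bigl(\mathrm{ind}_{P_{n,m}}^{G_{n+m}}\psi_{n,m}^{!}(a\boxtimes b)\boxtimes c\bigr)$, where $q_{n,m}\times\mathrm{id}\colon\widetilde{Flag_{n,m}^{\circ}}\times Quot_{l}^{\circ}\to Quot_{n+m,l}^{\circ}$ is proper and $G_{n+m,l}$‑equivariant. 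Second, I pull $\psi_{n+m,l}^{!}$ through this pushforward using \cref{lem:1.1}: the Cartesian square \cref{eq:correct4} (with vertical maps $p_{n+m,l}$) base‑changed along $q_{n,m}\times\mathrm{id}$ is a Cartesian square whose total space is an induced space of $Flag_{n,m,l}^{\circ}$ over $Flag_{n+m,l}^{\circ}$ (concretely $Flag_{n,m,l}^{\circ}\times_{P_{n,m,l}}P_{n+m,l}$, with proper projection), so $\psi_{n+m,l}^{!}\circ(q_{n,m}\times\mathrm{id})_{*}$ becomes that proper pushforward followed by $\psi_{n+m,l}^{!}$ computed over $Flag_{n,m,l}^{\circ}$. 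Third, I absorb the new pushforward into $\widetilde{q_{n+m,l*}}$: the compatibility of $\mathrm{ind}$ with proper pushforward, the transitivity $\mathrm{ind}_{P_{n,m,l}}^{G_{n+m+l}}=\mathrm{ind}_{P_{n+m,l}}^{G_{n+m+l}}\circ\mathrm{ind}_{P_{n,m,l}}^{P_{n+m,l}}$, and the fact that $q_{n+m,l}$ composed with the induced projection equals $q_{n,m,l}$, together collapse everything to $\widetilde{q_{n,m,l*}}$ applied to a class on $Flag_{n,m,l}^{\circ}$; the innermost datum $\mathrm{ind}_{P_{n,m}}^{G_{n+m}}\psi_{n,m}^{!}(a\boxtimes b)\boxtimes c$ is then identified, via \cref{lem:kcom}/\cref{lem:k-ass} and the Cartesian square defining $\psi_{n,m}^{!}$, with $\psi_{n,m}^{!}$ pulled back along \cref{eq:correct4}. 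Hence the composite inside is exactly $\psi_{n+m,l}^{!}\psi_{n,m}^{!}$.

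Two housekeeping points keep the diagram chase honest, and I expect them — together with identifying the base‑change squares — to be the real work. (i) The maps $\psi^{!}$ naturally live in $P$‑equivariant $K$‑theory of flag schemes, while the products $a*^{K(Coh)}b$ live in $G$‑equivariant $K$‑theory of $Quot$ schemes; the passage back and forth through $\mathrm{ind}$ is governed by \cref{eq:ind} and \cref{lem:1.9}, in particular the compatibility of $\mathrm{ind}$ with restriction and with proper pushforward, and one must check that these interact correctly with the base changes above. (ii) The map $\psi_{n+m,l}^{!}$ is built from a chosen resolution of $\mathcal{I}_{n+m}$ over $Quot_{n+m}^{\circ}$, and after base change along $q_{n,m}\times\mathrm{id}$ one needs it to agree with the Gysin map obtained from a resolution living over $Flag_{n,m}^{\circ}\times Quot_{l}^{\circ}$; this is precisely \cref{prop:ind}. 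The main obstacle is exactly this bookkeeping: verifying that the relevant fibre products of flag schemes and induced spaces are genuinely Cartesian, that properness survives each base change, and that \cref{lem:1.1} applies at each stage. The crucial subtlety is that $\psi$ is only an l.c.i.\ morphism (not flat), so no step may be replaced by an ordinary pullback and everything must be routed through the refined‑Gysin formalism of \cref{sec:K-theory}; once the diagrams are set up correctly, \cref{prop:ass} supplies the essential content.
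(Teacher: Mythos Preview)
Your proposal is correct and follows essentially the same route as the paper: both reduce associativity to \cref{prop:ass} by unwinding the definition, commuting $\psi^{!}$ past the proper pushforward $\widetilde{q_{*}}$ via \cref{lem:1.1} applied to the Cartesian squares \cref{correct:3}--\cref{eq:correct4}, and then collapsing the iterated pushforwards using transitivity of induction and $q_{n+m,l}\circ q_{n,m}=q_{n,m,l}=q_{n,m+l}\circ q_{m,l}$. The paper's write-up is considerably terser (it suppresses exactly the housekeeping you flag in (i) and (ii)), but the architecture is the same.
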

\begin{proof}
  Given any three non-negative integers $n,m,l$, then
  $$*_{n+m,l}^{K(Coh)}\circ *_{n,m}^{K(Coh)}:K(Coh_{n}(S))\otimes K(Coh_{m}(S))\otimes K(Coh_{l}(S))\to K(Coh_{n+m+l}(S))$$
  is induced by the diagram:
  \begin{equation*}
    \begin{tikzcd}
      K^{P_{n,m,l}}(Flag_{n,m}^{\circ }\times Quot_{l}^{\circ }) \ar{rd}{\widetilde{q_{n,m*}}} & K^{P_{n+m,l}}(Flag_{n+m,l}^{\circ }) \ar{rd}{\widetilde{q_{n+m,l*}}}& \\
      K^{P_{n,m,l}}(Quot_{n,m,l}^{\circ }) \ar{u}{\psi_{n,m}^{!}} & K^{P_{n+m,l}}(Quot_{n+m,l}^{\circ }) \ar{u}{\psi_{n+m,l}^{!}} & K^{G_{n+m+l}}(Quot_{n+m+l}^{\circ })
    \end{tikzcd}
  \end{equation*}
  Moreover, there is the following commutative diagram by Lemma
  \ref{lem:1.1}
  \begin{equation*}
    \begin{tikzcd}
      K^{P_{n,m,l}}(Flag_{n,m,l}^{\circ }) \ar{r}{\widetilde{q_{n+m,l*}}} & K^{P_{n+m,l}}(Flag_{n+m,l}^{\circ }) \\
      K^{G_{n,m,l}}(Flag_{n,m}^{\circ }\times Quot_{l}^{\circ }) \ar{u}{\psi_{n,m+l}^{!}} \ar{r}{\widetilde{q_{n+m,l*}}} &  K^{G_{n+m,l}}(Quot_{n+m,l}^{\circ }) \ar{u}{\psi_{n+m,l}^{!}}
    \end{tikzcd}
  \end{equation*}
  So
  $$*_{n+m,l}^{K(Coh)}\circ
  *_{n,m}^{K(Coh)}=\widetilde{q_{n+m,l*}}\circ
  \widetilde{q_{n,m*}}\circ \psi_{n+m,l}^{!}\circ \psi_{n,m}^{!}$$
  $$*_{n,m+l}^{K(Coh)}\circ *_{m,l}^{K(Coh)}=\widetilde{q_{n,m+l*}}\circ \widetilde{q_{m,l*}}\circ \psi_{n,m+l}^{!}\circ \psi_{m,l}^{!}.$$ Notice
  $$\widetilde{q_{n+m,l*}}\circ
  \widetilde{q_{n,m*}}=\widetilde{q_{n,m+l*}}\circ
  \widetilde{q_{m,l*}}$$
  by the fact that $q_{n+m,l}\circ q_{n,m}=q_{n,m,l}=q_{n,m+l}\circ
  q_{m.l}$ and
  $$\psi_{n+m,l}^{!}\circ \psi_{n,m}^{!}=\psi_{n,m+l}^{!}\circ
  \psi_{m,l}^{!}$$
  by Proposition \ref{prop:ass}. So
  $$*_{n+m,l}^{K(Coh)}\circ *_{n,m}^{K(Coh)}=*_{n,m+l}^{K(Coh)}\circ *_{m,l}^{K(Coh)}$$
  and hence $*^{K(Coh)}$ is associative.
\end{proof}
\section{Shuffle Algebra and K-theoretical Hall algebra}\label{sec:shuffle}
In this section, we study the equivariant K-theory of Quot schemes
through the localization theorem \ref{thm:1.4}, and construct a
homomorphism from the K-theoretical Hall algebra to a version of shuffle algebra considered by \cite{negut2017shuffle}.

\subsection{Localization for K(Coh)}

Let $d$ be a non-negative integer, and $\sigma_{d}$ the permutation group of order $d$, which is also the Weyl group of maximal torus $T_{d}$.

By Theorem \ref{thm:1.3}, $K^{G_{d}}(Quot_{d}^{\circ
})=(K^{T_{d}}(Quot_{d}^{\circ }))^{\sigma_{d}}.$ By Lemma
\ref{lem:2.4}, $(Quot_{d})^{T_{d}}=S^{d}.$ and
$K^{T_{d}}(S^{d})_{loc}=K(S^{d})(z_{1},\ldots,z_{d})^{Sym}$, where $Sym$ means invariant under the $\sigma_{d}$ action.
Then by localization theorem \ref{thm:1.4}, there is an isomorphism
\begin{equation}
  K^{T_{d}}(S^{d})_{loc}^{Sym}\xrightarrow{i_{d*}} K^{T_{d}}(Quot_{d})^{\sigma_{d}}_{loc}.
\end{equation}
Let $l_{d}:K^{T_{d}}(Quot_{d}))^{\sigma_{d}}\to (K^{T_{d}}(Quot_{d})^{\sigma_{d}}_{loc}$ be the natural localization map, and

\begin{equation}
  \label{eq:tau}
  \tau_{d}=l_{d}\circ i_{d*}^{-1}:(K^{T_{d}}(Quot_{d}))^{\sigma_{d}}\to K(S^{d})(z_{1},\ldots,z_{d})^{Sym}
\end{equation}

 Let 
$$\tau=\bigoplus_{d=0}^{\infty }\tau_{d}:\bigoplus_{d=0}^{\infty }K^{G_{d}}(Quot_{d}^{\circ })\to \bigoplus_{d=0}^{\infty }K(S^{d})(z_{1},\ldots,z_{d})^{Sym}$$
\subsection{Shuffle Algebra Revisited} Now we recall the definition of shuffle
algebra associated to $S$, which is defined in \cite{negut2017shuffle}.

\begin{definition}
\label{def:full surface big}

Consider the abelian group:
$$
Sh= \bigoplus_{d=0}^\infty K_{S^{d}}(z_1,...,z_d)^{Sym}
$$
with the following associative product:
\begin{equation}
\label{eqn:gen shuffle product}
R(z_1,...,z_n) *^{Sh} R'(z_1,...,z_{m}) = 
\end{equation}
$$
= Sym \left[ (R \boxtimes 1^{\boxtimes m}) (z_1,...,z_n) (1^{\boxtimes n} \boxtimes R') (z_{n+1},...,z_{n+m}) \prod_{i=1}^{n}\prod_{j=n+1}^{n+m} \zeta^S_{ij} \left( \frac {z_{j}}{z_{i}} \right) \right]
$$
where:
$$
\zeta^S_{ij}(x) =  \frac{[\wedge^\bullet( x \cdot \mathcal{O}_{\Delta_{ij}})]}{(1-x)(1-\frac{1}{x})} \in K_{S^{n+m}}(x)
$$
and $\mathcal{F}\boxtimes\mathcal{G}=pr_{n}^{*}(\mathcal{F})\otimes pr_{m}^{*}(\mathcal{G})$ for $\mathcal{F}\in K(S^{n})$,
$\mathcal{G}\in K(S^{m})$. Here $pr_{n},pr_{m}$ are the respective projection map from
$S^{n}\times S^{m}$ to $S^{n}$ and $S^{m}$. $(Sh,*^{Sh})$ is defined
to be the \textbf{shuffle algebra} associated to $S$.
\end{definition}

\begin{remark}
  The definition of shuffle algebra in our paper is slightly different
 from the definition in \cite{negut2017shuffle}, but they differ by a straightforward automorphism.
\end{remark}

\begin{theorem}
  \label{thm:3.4}
  $\tau$ is an algebra homomorphism between $(K(Coh),*^{K(Coh)})$ and $(Sh,*^{Sh})$.
\end{theorem}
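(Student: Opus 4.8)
The plan is to verify directly that $\tau$ intertwines the two products, working one graded piece at a time: it suffices to show that for $R \in K^{G_n}(Quot_n^\circ)$ and $R' \in K^{G_m}(Quot_m^\circ)$,
\begin{equation*}
\tau_{n+m}(R *^{K(Coh)} R') = \tau_n(R) *^{Sh} \tau_m(R').
\end{equation*}
First I would unwind the left-hand side. Recall $*^{K(Coh)}_{n,m} = \widetilde{q_{n,m*}} \circ \widetilde{\psi_{n,m}}$, where $\widetilde{\psi_{n,m}}$ combines the projection $P_{n,m}\to G_{n,m}$ with the refined Gysin map $\psi_{n,m}^!$, and $\widetilde{q_{n,m*}}$ combines induction $ind_{P_{n,m}}^{G_{n+m}}$ with the proper pushforward $q_{n,m*}$. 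The strategy is to pass everything to the torus-fixed loci via Thomason localization (\cref{thm:1.4}) and track each of these three operations separately: (i) the effect of $\psi_{n,m}^!$ after localization, (ii) the effect of induction $ind_{P_{n,m}}^{G_{n+m}}$ combined with $q_{n,m*}$ after localization, and (iii) the compatibility of the restriction isomorphisms $K^{G_d}(Quot_d^\circ) = (K^{T_d}(Quot_d^\circ))^{\sigma_d}$ with the Weyl-group-quotient structure of $Sh$.

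For step (i), I would apply \cref{lem:1.8} to the Cartesian square \cref{eq:2.4}: on the $T_{n,m}$-fixed locus $S^n \times S^m$, with $W = W_{n,m}$ and $V = V_{n,m}$, the lemma gives
\begin{equation*}
\psi_{n,m}^!(\mathcal{F}) = j_*\left( \frac{[\wedge^\bullet V_{n,m}^\vee]}{[\wedge^\bullet W_{n,m}^\vee]} \cdot \mathcal{F} \right)
\end{equation*}
in $K^{T_{n,m}}(Flag_{n,m}^\circ)_{loc}$, and \cref{eq:2.5} computes this ratio explicitly as $\prod_{i=1}^n \prod_{j=n+1}^{n+m} \frac{\wedge^\bullet[\frac{z_j}{z_i}\mathcal{O}_{\Delta ij}]}{1 - \frac{z_j}{z_i}}$. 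This is precisely (up to the $(1-\frac1x)$ factor, which will be absorbed by the renormalization alluded to in \cref{def:full surface big} — this bookkeeping is where I would be most careful) the kernel $\prod \zeta^S_{ij}(z_j/z_i)$ appearing in the shuffle product \cref{eqn:gen shuffle product}. For step (ii), I would use \cref{lem:1.9}\eqref{1.9.1}, which identifies $(\widetilde{Flag_{n,m}^\circ})^{T_{n+m}}$ with a disjoint union of copies of $(Flag_{n,m}^\circ)^{T_{n,m}}$ indexed by coset representatives $W/W_{G_{n,m}} = \sigma_{n+m}/(\sigma_n \times \sigma_m)$; after localization, pushforward along $q_{n,m}$ restricted to the fixed locus becomes the symmetrization operator $Sym$ over this coset space, which is exactly the outer $Sym[\,\cdot\,]$ in the shuffle formula. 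The commutative diagram \cref{eq:1.7} of \cref{lem:1.9} is the technical tool that makes induction compatible with restriction-to-fixed-points; I would combine it with \cref{lem:1.1} (functoriality of Gysin maps under proper pushforward) to move $q_{n,m*}$ past the localization isomorphism $i_{d*}$.

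Assembling these, $\tau_{n+m}(R *^{K(Coh)} R')$ equals $Sym$ applied to the product of $\tau_n(R) \boxtimes \tau_m(R')$ (this box product reflecting that the fixed locus $(Quot_{n,m}^\circ)^{T_{n,m}} = S^n \times S^m$ splits, and that the universal sheaf $\mathcal{E}_n$ over the flag restricts to the pullback of the universal sheaf from the $S^n$ factor, per \cref{lem:2.4}) times the kernel $\prod \zeta^S_{ij}$, which is the definition of $\tau_n(R) *^{Sh} \tau_m(R')$. The main obstacle, as flagged above, will be the normalization: reconciling the precise form of $\zeta^S_{ij}$ in \cref{def:full surface big} (with its symmetric denominator $(1-x)(1-\frac1x)$) against the asymmetric ratio produced by \cref{lem:1.8}/\cref{eq:2.5}, and verifying that the "renormalized version (see \cref{def:full surface big})" of $Sh$ — presumably a twist of the $Rep(T_d)$-module structure or a rescaling of generators by a power of $\prod z_i$ — exactly cancels the discrepancy and also fixes the unit. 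A secondary subtlety is checking that $\tau$ is well-defined and injective enough (i.e. that $i_{d*}^{-1}$ makes sense) so that the identity, once proven after localization, descends to the non-localized statement; but since $\tau_d$ is defined as $l_d \circ i_{d*}^{-1}$ landing in the localized ring, the identity is really being proven in $Sh$ itself, and the only issue is confirming the image of $*^{K(Coh)}$ lands in the $\sigma$-invariants, which follows from the $G$-equivariance built into every map in \cref{eq:3.1}.
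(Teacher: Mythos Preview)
Your overall strategy matches the paper's proof almost step for step: localize to the torus-fixed locus, compute $\psi_{n,m}^!$ via \cref{lem:1.8} and \cref{eq:2.5}, handle induction via \cref{lem:1.9}, and assemble. The paper organizes this into three commutative diagrams (its Steps 1--3) which fit together exactly as you outline.

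The one point you misdiagnose is the normalization. You correctly observe that \cref{eq:2.5} only produces the factor $\prod \frac{[\wedge^\bullet(\frac{z_j}{z_i}\mathcal{O}_{\Delta_{ij}})]}{1-z_j/z_i}$, missing the second denominator $(1-z_i/z_j)$ in $\zeta^S_{ij}$. But this is \emph{not} absorbed by any external renormalization of $Sh$; \cref{def:full surface big} is already the final definition. The missing factor comes from the induction step itself. Look again at part~(2) of \cref{lem:1.9}: the map $\tilde{i}$ in diagram \cref{eq:1.7} is multiplication by $[\wedge^\bullet(\mathfrak{g}/\mathfrak{p})^*]$, and for $P_{n,m}\subset G_{n+m}$ this class is precisely $\prod_{i=1}^{n}\prod_{j=n+1}^{n+m}(1-z_i/z_j)$. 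So when you invert $j_{n,m}^*$ to pass from $K^{T}(Flag_{n,m}^\circ)$ back up to $K^{T}(\widetilde{Flag_{n,m}^\circ})$ on the fixed-locus side, you must divide by this class; the paper packages this as the map $\hat{s}^*(\mathcal{F})=s^*\bigl(\mathcal{F}/\prod(1-z_i/z_j)\bigr)$ in its Step~2. Combining $\hat{s}^*$ with $\rho_{n,m}$ (the Gysin contribution) gives exactly $\zeta^S_{ij}$. Once you make this correction your argument is complete and essentially identical to the paper's.
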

\begin{proof}
  \begin{description}[align=left, leftmargin=0pt, labelindent=\parindent, listparindent=\parindent, labelwidth=0pt, itemindent=!]
  \item[Step 1] Let $F=S^{n+m}\times _{Quot_{n,m}^{\circ
      }}Flag_{n,m}^{\circ }$, and consider the following Cartesian diagram:
  \begin{equation}
    \begin{tikzcd}
      F \ar{r}{i_{F}} \ar{d} & Flag_{n,m}^{\circ }\ar{r}\ar{d} & W_{n,m} \ar{d}{\psi_{n,m}} \\
      S^{n+m} \ar{r}{i} & Quot_{n,m}^{\circ }\ar{r} & V_{n,m}  \\
    \end{tikzcd}
  \end{equation}
  which defines a refined Gysin map
  $$\psi_{n,m}'^{!}:K^{T_{n+m}}(S^{n+m})\to K^{T_{n+m}}(F).$$ We prove
  that there is the commutative diagram
  \begin{equation}
    \label{eq:2.9}
      \begin{tikzcd}
      K^{T_{n+m}}(S^{n+m}) \ar{r}{j_{*}}  & K^{T_{n+m}}(F)  \ar{r}{i_{F*}}& K^{T_{n+m}}(Flag_{n,m}^{\circ }) \\
       & K^{T_{n+m}}(S^{n+m}) \ar{r}{i_{*}} \ar{u}{\psi_{n,m}'^{!}} \ar{ul}{\rho_{n,m}} & K^{T_{n+m}}(Quot_{n,m}^{\circ }) \ar{u}{\psi^{!}_{n,m}}
      \end{tikzcd}
    \end{equation}
    where
     $$\rho_{n,m}(\mathcal{F})=\prod_{i=1}^{n}\prod_{j=n+1}^{n+m}\frac{\wedge^{\bullet}[\frac{z_{j}}{z_{i}}\mathcal{O}_{\Delta ij}]}{1-\frac{z_{j}}{z_{i}}}\mathcal{F}.$$
    In fact, by \cref{lem:1.1}, $i_{F_{*}}\circ
     \psi_{n,m}'^{!}=\psi_{n,m}^{!}\circ i_{*}$. Moreover,
  $W_{n,m}^{T_{n+m}}=V_{n+m}^{T_{n+m}}=S^{n}\times S^{m}$. By
  \cref{eq:1.4} and \cref{lem:1.8}, $\psi_{n,m}'^{!}=j_{*}\circ \rho_{n,m}$.
\item[Step 2] Let
  $$\widetilde{S^{n+m}}=S^{n+m}\times_{\sigma_{n,m}}\sigma_{n+m}.$$
  The natural action of $\sigma_{n+m}$ on $S^{n+m}$ induces a projection
  map $s:\widetilde{S^{n+m}}\to S^{n+m}$. Then
  $$(\widetilde{Flag_{n,m}^{\circ })}^{T_{n+m}}=\widetilde{S^{n+m}}.$$

  There is the following commutative diagram
  \begin{equation}
    \label{eq:2.8}
    \begin{tikzcd}
      K^{T_{n+m}}(S^{n+m}\times _{\sigma_{m}\times \sigma_{n}}\sigma_{n+m}) \ar{r}{i_{*}} \ar{d}{s_{*}} & K^{T_{n+m}}(\widetilde{Flag_{n,m}^{\circ }}) \ar{d}{q_{n+m*}}\\
        K^{T_{n+m}}(S^{n+m}) \ar{r}{i_{*}} & K^{T_{n+m}}(Quot_{n+m}^{\circ })
    \end{tikzcd}
  \end{equation}
  by the fact $i_{*}\circ s_{*}=(i\circ s)_{*}=(q_{n+m}\circ
  i)_{*}=q_{n+m*}\circ i_{*}$. And by \cref{lem:1.9}, there is the
  following commutative diagram:
  \begin{equation}
    \label{eq:2.7}
    \begin{tikzcd}
      K^{T_{n+m}}(S^{n+m}\times _{\sigma_{m}\times \sigma_{n}}\sigma_{n+m}) \ar{r}{i_{*}}  & K^{T_{n+m}}(\widetilde{Flag_{n,m}^{\circ }}) \ar{d}{j_{n,m}^{*}} \\
       K^{T_{n+m}}(S^{n+m}) \ar{r}{i_{*}} \ar{u}{\hat{s}^{*}} &  K^{T_{n+m}}(Flag_{n+m}) 
    \end{tikzcd}
  \end{equation}
  where
  $\hat{s}^{*}(\mathcal{F})=s^{*}(\frac{\mathcal{F}}{\prod_{i=1}^{n}\prod_{j=n+1}^{n+m}(1-\frac{z_{i}}{z_{j}})})$
  and $j_{n,m}$ is the inclusion map from $Flag_{n,m}^{\circ }$ to
  $\widetilde{Flag_{n,m}^{\circ }}$.
\item[Step 3] By \cref{eq:2.9}, \cref{eq:2.8} and \cref{eq:2.7}, we have the following commutative diagram:
   \begin{equation*}
     \begin{tikzcd}
         K^{T_{n+m}}(S^{n+m})_{loc}^{\sigma_{n,m}}\ar{d}{id} & K^{T_{n+m}}(Quot_{n,m}^{\circ })^{\sigma_{n,m}} \ar{d}{id} \ar{l} & K^{G_{n,m}}(Quot_{n,m}^{\circ }) \ar{d} \ar{l}\\
        K^{T_{n+m}}(S^{n+m})_{loc}^{\sigma_{n,m}}\ar{d}{\rho_{n,m}} & K^{T_{n+m}}(Quot_{n,m}^{\circ })^{\sigma_{n,m}} \ar{d}{\psi_{n,m}^{!}} \ar{l}{i_{*}^{-1}} &   K^{P_{n,m}}(Quot_{n,m}^{\circ }) \ar{d}{\psi_{n,m}^{!}} \ar{l}\\
    K^{T_{n+m}}(S^{n+m})_{loc}^{\sigma_{n,m}}  \ar{d}{\hat{s}^{*}} & K^{T_{n+m}}(Flag_{n,m}^{\circ })^{\sigma_{n,m}} \ar{l}{i_{*}^{-1}} & K^{P_{n,m}}(Flag_{n,m}^{\circ }) \ar{l} \ar{d}{ind_{P}^{G}}\\
    K^{T_{n+m}}(\widetilde{S^{n+m}})_{loc}^{\sigma_{n,m}} \ar{d}{s_{*}}  & K^{T_{n+m}}(\widetilde{Flag_{n,m}^{\circ }})^{\sigma_{n+m}} \ar{d}{q_{n+m*}} \ar{u}{j_{n,m}^{*}} \ar{l}{i_{*}^{-1}} & K^{G_{n+m}}(\widetilde{Flag_{n+m}^{\circ }}) \ar{d}{q_{n+m*}} \ar{l}\\
    K^{T_{n+m}}(S^{n+m})_{loc}^{\sigma_{n+m}} & K^{T_{n+m}}(Quot_{n+m}^{\circ })^{\sigma_{n+m}} \ar{l}{i_{*}^{-1}} &  K^{G_{n+m}}(Quot_{n+m}^{\circ }) \ar{l}
    \end{tikzcd}
  \end{equation*}
  Notice $$*_{n,m}^{Sh}=s_{*}\circ \hat{s}^{*}\circ \rho_{n,m}$$ and
  $$*_{n,m}^{K(Coh)}=q_{n+m*}\circ ind_{P_{n,m}}^{G_{n+m}}\circ \psi_{n,m}^{!}.$$
  Thus we have the commutative diagram:
  \begin{equation*}
    \begin{tikzcd}
      K^{T_{n+m}}(S^{n+m})_{loc}^{\sigma_{n}\times \sigma_{m}} \ar{d}{*_{n,m}^{Sh}} & K^{G_{n+m}}(Quot_{n,m}^{\circ }) \ar{d}{*_{n,m}^{K(Coh)}} \ar{l}{\tau_{n}\otimes \tau_{m}} \\
      K^{T_{n+m}}(S^{n+m})_{loc}^{\sigma_{n+m}} & K^{G_{n+m}}(Quot_{n+m}^{\circ }) \ar{l}{\tau_{n+m}}
    \end{tikzcd}
  \end{equation*}
  i.e. $\tau$ forms a homomorphism between $*^{K(Coh)}$ and $*^{Sh}$.
\end{description}
\end{proof}

\bibliographystyle{amsalpha}
\bibliography{main.bib}
\end{document}